\DeclareMathOperator*{\esssup}{ess\,sup}
\newtheorem{theorem}{Theorem}[section]
\newtheorem{corollary}[theorem]{Corollary}
\newtheorem{lemma}[theorem]{Lemma}
\newtheorem{proposition}[theorem]{Proposition}
\newtheorem{definition}[theorem]{Definition}
\newtheorem{remark}[theorem]{Remark}
\numberwithin{equation}{section}
\begin{document}
	
	\begin{center}
		\Large{\textbf{Well-posedness and large time behavior of a size-structured growth-coagulation-fragmentation model}}
	\end{center}
	\medskip
	\medskip
	\centerline{${\text{Saroj ~ Si}}$ and ${\text{Ankik Kumar Giri$^{*}$}}$ }\let\thefootnote\relax\footnotetext{$^{*}$Corresponding author. Tel +91-1332-284818 (O);  Fax: +91-1332-273560  \newline{\it{${}$ \hspace{.3cm} Email address: }}ankik.giri@ma.iitr.ac.in}
	\medskip
	{\footnotesize
		\centerline{Department of Mathematics, Indian Institute of Technology Roorkee,}
		
		\centerline{Roorkee-247667, Uttarakhand, India}
	}

	\bigskip

\begin{quote}
		{\small {\em \bf Abstract.} The existence and uniqueness of weak solutions to a size-structured growth-coagulation-fragmentation (GCF) equation with a renewal boundary condition are shown for a class of unbounded coagulation and fragmentation kernels. The existence proof is based on a weak compactness framework in the weighted $L^1$-space. This result extends the existence results of Banasiak and Lamb \cite{banasiak2009coagulation} and Ackleh et al. \cite{ackleh2003first, ackleh2021structured}. Furthermore, we establish a stability result and derive uniqueness as a direct consequence of it. Moreover, this study explores the large time behavior of weak solutions.}
\end{quote}
	
	\vspace{.3cm}
	
	\noindent
	{\rm \bf AMS subject classification.} 45K05, 35F25, 35A01, 46B50\\
	\noindent
	{\bf Key words.} Growth, coagulation, fragmentation, weak compactness, existence, uniqueness, large time behavior.\\
	\section{introduction}
	\noindent Coagulation and fragmentation models have numerous applications in physical, chemical, biological, and environmental sciences, see \cite{Lissauer1993, Safronov1972, Ziff1980, Wells2018, Friedlander2000, Adler1997, BLL_book} and the references therein. In particular, the size-structured growth-coagulation-fragmentation (GCF) model~\cite{ackleh1997parameter, ackleh1997modeling, Adler1997, rudnicki2006fragmentation} with a renewal boundary condition describes the evolution of phytoplankton populations in the marine environment. Phytoplankton~\cite{nowicki2022quantifying, smetacek1985role, alldredge1989direct, alldredge1988situ} play a crucial role in climate change by controlling atmospheric carbon dioxide levels through the process of carbon deposition in the deep ocean. Additionally, they are vital to oceanic and freshwater food chains~\cite{hjort1914fluctuations, cushing1990plankton, cushing1983fish}, serving as the sole food source for many fish species during their larval stage. Therefore, the study of the size-structured GCF model with a renewal boundary condition is of fundamental importance. Denoting by $\xi(t,u)$ the density of aggregates of size $u$ at time $t$, the size-structured GCF equations~\cite{ackleh1997parameter} are given by 
	\begin{align}	
		\partial_{t}\xi(t,u) &=-\partial_{u}(g\xi)(t,u)-\mu(u)\xi(t,u) -\alpha(u)\xi(t,u)+\int_{u}^{\infty} \alpha(u_{1})\beta(u|u_{1})\xi(t,u_{1})\;du_{1}\nonumber\\
		&\quad +\frac{1}{2} \int_{0}^{u} \Upsilon(u-u_{1},u_{1})\xi(t,u-u_{1})\xi(t,u_{1})\;du_{1} \nonumber\\
		&\quad-\int_{0}^{\infty} \Upsilon(u,u_{1})\xi(t,u)\xi(t,u_{1})\;du_{1},\quad (t,u)\in (0,\infty)^{2},\label{maineq.1}
	\end{align}
supplemented with the initial condition
	\begin{align}\label{initialdata}
	  \xi(0,u)=\xi^{{\mathrm{in}}}(u) \geq 0, \quad u\in [0,\infty),
	\end{align}
	and the renewal boundary condition
	\begin{align}\label{maineq.2} 
	\lim_{u\rightarrow 0^{+}}g(u)\xi(t,u)&=\int_{0}^{\infty}a (u_{1})\xi(t,u_{1})\;du_{1},\quad t>0.
   \end{align}
In~\eqref{maineq.1}--\eqref{maineq.2}, $g$ represents the rate of formation of aggregates of size $u$ due to the growth of cells by division within it, while the parameter $a$ denotes the rate at which daughter cells are incorporated into the single-cell population. The death rate $\mu$ represents the death or removal of cells. The daughter distribution function $\beta(u|u_{1})$ describes the distribution of daughter clusters of size $u$ produced by a mother cell of size $u_{1}$. Moreover, the interaction rates $\alpha$ and $\Upsilon$ are non-negative functions, referred to as the fragmentation and coagulation kernels, respectively. In equation \eqref{maineq.1}, the third term on the right-hand side represents the loss of clusters of size $u$ due to their breakup into smaller fragments. The sixth term accounts for the loss of clusters of size $u$ as they merge with other clusters through coalescence. Conversely, the fourth term describes the formation of clusters of size $u$ resulting from the breakup of larger clusters, while the fifth term represents the formation of clusters of size $u$ due to the merging of smaller clusters. For future use, we define the following fragmentation operator $\mathcal{F}$ and the coagulation operator $\mathcal{C}$ as 
\begin{align}\nonumber
	\mathcal{F}(\xi)(t,u):=-\alpha(u)\xi(t,u)+\int_{u}^{\infty} \alpha(u_{1})\beta(u|u_{1})\xi(t,u_{1})\;du_{1}
\end{align}
and 
\begin{align}
	\mathcal{C}(\xi)(t,u)&:=\frac{1}{2} \int_{0}^{u} \Upsilon(u-u_{1},u_{1})\xi(t,u-u_{1})\xi(t,u_{1})\;du_{1} \nonumber \\
	&\quad	-\int_{0}^{\infty} \Upsilon(u,u_{1})\xi(t,u)\xi(t,u_{1})\;du_{1},\nonumber 
\end{align}
for $(t,u)\in (0,\infty)^{2}$.

Jackson \cite{jackson1990model} initially introduced an algal growth population model incorporating the  coagulation term in a discrete setting. His approach required either that the cells within aggregates be dead or that all offspring cells remain within the aggregate after cell division. In \cite{ackleh1997modeling}, Ackleh and Fitzpatrick proposed the coagulation term for a continuous size-structured population model. Their model included biological processes such as coalescence, birth, death, and growth. Later, in \cite{ackleh1997parameter}, Ackleh extended this model by adding a fragmentation term. Compared to the GCF equations~\eqref{maineq.1}--\eqref{initialdata}~\cite{Banasiak2012, BLL_book, GajewskiZacharias1982, Gajewski1983, FriedmanReitich1990, banasiak2020growth, banasiak2022explicit, Giri2025well}, the GCF equations incorporating the renewal boundary condition \eqref{maineq.2}, have received relatively less attention in the mathematical literature for unbounded coagulation kernels. The first mathematical study of~\eqref{maineq.1}--\eqref{maineq.2} was carried out by Ackleh and Fitzpatrick~\cite{ackleh1997modeling}, where the existence of a unique solution was shown in the Banach space $L^{2}(y_{0}, y_{1})$ for the non-negative parameters $g$, $\mu$, $\Upsilon$, and $a$ satisfying the following conditions
\begin{align*}
   \mu, a \in L^{\infty}(y_{0}, y_{1}),\; g\in W^{1,\infty}(y_{0}, y_{1})\; \text{with}\; g(u)>0 \; \text{on}\;(y_{0}, y_{1})\; \text{and}\;\;g(y_{1})=0,\\
	 \Upsilon \in L^{\infty}([y_{0}, y_{1}]^{2}), \; 0\leq \Upsilon(u,u_{1})=\Upsilon(u_{1},u), \; \text{and}\; \Upsilon(u,u_{1})=0 \; \text{ for }\; u+u_{1}> y_{1},
\end{align*}
for $(u,u_{1})\in [y_{0}, y_{1}]^{2}$, $y_{0}, y_{1}\in (0,\infty)$ with $y_{0}< y_{1}$. Later, Ackleh~\cite{ackleh1997parameter} demonstrated the existence of a unique solution to \eqref{maineq.1}--\eqref{maineq.2} by considering the binary fragmentation in the Banach space $L^{1}(y_{0}, y_{1})$ for the non-negative parameters satisfying the following conditions 
  \begin{align*}
  	g\in C^{1}([y_{0}, y_{1}]),\; \mu, a \in C([y_{0}, y_{1}]), \Upsilon \in C([y_{0}, y_{1}]^{2}),\\ \Upsilon(u,u_{1})=\Upsilon(u_{1},u),\;  \text{and}\; \Upsilon(u,u_{1})=0 \; \text{ for }\; u+u_{1}> y_{1}.
  \end{align*}
  for $(u,u_{1})\in [y_{0}, y_{1}]^{2}$, $y_{0}, y_{1}\in (0,\infty)$ with $y_{0}< y_{1}$. Afterward, Ackleh and Deng~\cite{ackleh2003first} extended the existence and uniqueness results to the unbounded domain $(0,\infty)$ using the monotone sequence method for the class of bounded coagulation kernels and identically zero fragmentation kernel, i.e., $\alpha\equiv 0$. Further, Banasiak and Lamb~\cite{banasiak2009coagulation} extended previous existence and uniqueness results by considering an unbounded fragmentation kernel $\alpha$, a breakage rate $\beta$, a birth rate $a$, and a growth rate $g$. In particular, $\alpha$, $\beta$, $a$,  and $g$ satisfy \eqref{alphaassum}, \eqref{particlebound}, \eqref{lmassconservation}, \eqref{birthrateassum}, and \eqref{growthassum} with
	\begin{align}\label{gintegrability}
		\int_{0^{+}}\frac{du}{g(u)}<\infty, 
	\end{align}
where $\int_{0^{+}}$represents the integral over a right-neighborhood of $u=0$. In this study, Banasiak and Lamb~\cite{banasiak2009coagulation} considered only a class of bounded coagulation kernels $\Upsilon$ and a bounded death rate $\mu$. In \cite{banasiak2011blow}, Banasiak has shown the blowup of the first moment $M_{1}(\xi)$ of the solution~(as defined in \eqref{momentdefn} for $\gamma=1$) to~\eqref{maineq.1}--\eqref{maineq.2} when the growth of the birth rate $a$ is sufficiently large (at least quadratic) as $u\to \infty$. Later, Ackleh et al.~\cite{ackleh2021structured, ackleh2023high, ackleh2023finite} established the existence of a unique measure-valued solution in the space of Radon measures for a class of bounded and globally Lipschitz coagulation kernels $\Upsilon$ and for a fragmentation kernel $\alpha$ belonging to $W^{1,\infty}(0,\infty)$. In~\cite{banasiak2024growth}, Banasiak et al. studied the well-posedness and large time behavior of solutions to \eqref{maineq.1}--\eqref{maineq.2} for a class of unbounded fragmentation kernels, assuming the coagulation kernel  $\Upsilon\equiv 0$.

Based on the available literature, all prior studies~\cite{ackleh1997modeling, ackleh1997parameter, ackleh2003first, banasiak2009coagulation, banasiak2011blow, ackleh2021structured, ackleh2023high, ackleh2023finite, banasiak2024growth} on \eqref{maineq.1}--\eqref{maineq.2} have relied on the hypothesis that the coagulation kernel $\Upsilon$ is bounded and that the growth rate $g$ satisfies \eqref{gintegrability}. This work aims to relax these assumptions, allowing for the study of a more general class of physically valid unbounded coagulation kernels and growth rate $g$.

The main goal of this paper is to establish the existence and uniqueness of a weak solution to \eqref{maineq.1}--\eqref{maineq.2} in the weighted $L^{1}$-space for a class of unbounded coagulation kernels $\Upsilon$, fragmentation kernels $\alpha$, birth rates $a$, death rates $\mu$, and growth rates $g$. In particular, we consider unbounded coagulation kernels, including the shear coagulation and gravitational coagulation kernels, as presented in \cite[Table 1]{aldous1999deterministic} and \cite[Table 1]{smit1994aggregation}. An important aspect that has received little attention is the long-term behavior of solutions to the GCF model \eqref{maineq.1}--\eqref{maineq.2}. While the asymptotic properties of linear growth-fragmentation equations have been well studied in \cite{canizo2021spectral, banasiak2024growth}, similar results for the GCF model remain limited. In this work, we address this gap by analyzing the long-term behavior of weak solutions to \eqref{maineq.1}--\eqref{maineq.2}. Our study is mainly inspired by prior works \cite{laurenccot2007well, Laurencot2001, ackleh2003first, banasiak2009coagulation, Giri2025well, lachowicz2003oort}, with the goal of broadening the class of admissible coagulation kernels.

We now provide an overview of this paper: The next section introduces the assumptions on the parameters $\Upsilon$, $\beta$, $\mu$, $\alpha$, $g$, $a$, and $\xi^{{\mathrm{in}}}$. It also presents the notion of a weak solution to \eqref{maineq.1}--\eqref{maineq.2} and our main results. The existence of global weak solutions to \eqref{maineq.1}--\eqref{maineq.2} is addressed in \Cref{existence section}. First, we define the approximate system~\eqref{truncated-1}--\eqref{truncated-2} corresponding to~\eqref{maineq.1}--\eqref{maineq.2}. Then, we establish the global existence of a unique classical solution to~\eqref{truncated-1}--\eqref{truncated-2} and derive bounds on the zeroth moment~$M_{0}(\xi)$ and the first moment~$M_{1}(\xi)$ of the solution, as defined in~\eqref{momentdefn}. Additionally, this section includes the method of weak $L^{1}$-compactness technique originally introduced by Stewart in \cite{Stewart1989} and later adapted by Lauren{\c{c}}ot in \cite{Laurencot2001}. Specifically, we first control the tail of the approximate sequence of solutions. Then by using the uniform integrability result along with the Dunford-Pettis theorem, we show that the approximate sequence of solutions is weakly compact in $L^1([0,\infty))$. Subsequently, we show the time equicontinuity to apply a generalized form of the Arzelà-Ascoli theorem \cite[Theorem 1.3.2]{Vrabie2003} to extract a convergent sub-sequence. Next, we improve the convergence in~\Cref{improved convergence}~to show that the limit of the convergent sub-sequence is the required weak solution to \eqref{maineq.1}--\eqref{maineq.2}. In \Cref{uniqueness section}, we turn our attention to establishing a stability result that implies the uniqueness of the weak solution in some appropriate space. Finally, in the last section, large time behavior of weak solutions is discussed in the spirit of \cite[Theorem 1.1]{escobedo2002gelation} and \cite[Theorem 2.5 and Proposition 4.1]{lachowicz2003oort}. 
\section{Hypotheses and Main Results}\label{Assumption and Main Result}
		\noindent We make the following hypotheses on the coefficients $\Upsilon$, $\beta$, $\alpha$, $\mu$, $g$, $a$, and the initial data $\xi^{{\mathrm{in}}}$ throughout this paper.
		\begin{enumerate}[label=(\roman*)]
			\item $\Upsilon$ is a real-valued, measurable function on $(0,\infty)^2$. There is $\Upsilon_{0}>0$ such that
			\begin{align}\label{multiplicativecoagassum}
				0\leq \Upsilon(u_{1},u)=\Upsilon(u,u_{1}) \leq \Upsilon_{0}(1+u)(1+u_{1})\quad \text{ for all}\quad (u,u_{1})\in (0,\infty)^{2}.
			\end{align}
		We consider two distinct cases. In the first case, we assume that there exists a constant $\Upsilon_{1}>0$ such that
			\begin{align}\label{additivecoagassum}
				\Upsilon(u,u_{1}) \leq \Upsilon_{1}(u+u_{1}), \quad (u,u_{1})\in (1, \infty)^{2}.
			\end{align}
	In the second case, we assume instead that 
			\begin{align}\label{subquadraticcoagassum}
				\Upsilon(u,u_{1}) \leq r(u)r(u_{1}), \quad (u,u_{1})\in (0,\infty)^{2},
		\end{align}
	where $r : (0,\infty) \to (0,\infty)$ is a measurable function satisfying
		\begin{align}\label{sublimboundcoagassum}
			\sup_{u>0}\frac{r(u)}{1+u}<\infty\quad \text{and}\quad \lim_{u\rightarrow \infty}\frac{r(u)}{u}=0.
		\end{align}
	\begin{remark}
		It is important to note that the assumptions on the coagulation kernels include several physically significant cases. Specifically, the following coagulation kernels are covered, as documented in \cite[Table 1]{aldous1999deterministic}, \cite[Table 1]{smit1994aggregation}, \cite[Appendix B.5]{giri2010mathematical}, and the references therein.
		
		\begin{table}[h]
			\centering
			\renewcommand{\arraystretch}{1.5}
			\begin{tabular}{|c|l|l|}
				\hline
				\textbf{No.} & \textbf{Kernel Type} & \textbf{Expression} \\
				\hline
				1 & Linear shear kernel & $(u^{1/3}+u_{1}^{1/3})^{3}$ \\
				2 & Nonlinear shear kernel & $(u^{1/3}+u_{1}^{1/3})^{7/3}$ \\
				3 & Gravitational kernel & $(u^{1/3}+u_{1}^{1/3})^{2}\left|u^{1/3}-u_{1}^{1/3}\right|$ \\
				4 & Modified Smoluchowski kernel & $\frac{(u^{1/3}+u_{1}^{1/3})^{2}}{u^{1/3}u_{1}^{1/3}+c}$ \\
				5 & Activated sludge flocculation kernel & $\frac{(u^{1/3}+u_{1}^{1/3})^{q}}{1+\left(\frac{u^{1/3}+u_{1}^{1/3}}{2u_{c}^{1/3}}\right)^{3}}$ \\
				6 & Product kernel & $(uu_{1})^{\omega}$ \\
				\hline
			\end{tabular}
			\caption{Standard coagulation kernels}
			\label{tab:coagulation_kernels}
		\end{table}
		In \Cref{tab:coagulation_kernels}, $c$ and $u_{c}$ are positive constants with $0\leq q<3$ and $0\leq \omega <1$. The coagulation kernels $(1)–(5)$ satisfy conditions \eqref{multiplicativecoagassum} and \eqref{additivecoagassum}, whereas the coagulation kernel $(6)$ satisfies \eqref{multiplicativecoagassum}, \eqref{subquadraticcoagassum}, and \eqref{sublimboundcoagassum}.
	\end{remark}
     	\item The daughter distribution function $\beta$ is measurable, non-negative, and satisfies $\beta(u|u_{1}) = 0$ whenever $u > u_{1}$. Additionally, we impose a constraint on the maximum number of daughter particles
     	\begin{align}\label{particlebound}
     		\sup_{0 \leq u_{1} < \infty} n(u_{1}) = \sup_{0 \leq u_{1} < \infty} \int_{0}^{u_{1}} \beta(u|u_{1}) \, du = M < \infty,
     	\end{align}
     	and 
     	\begin{align}\label{lmassconservation}
     		\int_{0}^{u_{1}} u \beta(u|u_{1})\;du = u_{1},\quad u_{1}>0,
     	\end{align}
     	which reflects the property of local mass conservation.
Moreover, we suppose that the fragmentation kernel $\alpha$ is measurable on $[0,\infty)$ and satisfies
\begin{align}  
	 0&\leq \alpha(u)\leq P_{\alpha}u+Q_{\alpha},\quad u\in [0,\infty),\label{alphaassum}
\end{align}
where $P_{\alpha}$ and $Q_{\alpha}$ are positive real numbers. 
Under the condition~\eqref{alphaassum}, the fragmentation kernel $\alpha$ satisfies the following property
\begin{align}\label{localbound}
	\alpha\in L^{\infty}([0, n]) \quad \text{ for all}\quad n>0.
\end{align}
The assumptions imposed on $\beta$ and $\alpha$ encompass the following physically relevant classes, as discussed in \cite{vigil1989stability} and \cite[Section 2.3]{BLL_book}:  
\begin{align*}  
	\alpha(u) = l_{0}u^{l_{1}}, \quad u \ge 0\quad \text{and} \quad \beta(u|u_{1}) = (\nu +2) \frac{u^{\nu}}{u_{1}^{\nu +1}}, \quad 0 < u < u_{1},  
\end{align*}  
where $l_{0} \geq 0$, $0 \leq l_{1} \leq 1$, and $-1 < \nu \leq 0$.

\item The function $\mu$ is real-valued and measurable on $[0,\infty)$ and satisfies
\begin{align}\label{mu}
	0&\leq \mu(u)\leq P_{\mu}u+Q_{\mu},\quad u\in [0,\infty),
\end{align}
where $P_{\mu}$ and $Q_{\mu}$ are positive real numbers.  			
\item $g$ is an absolutely continuous function on $[0,\infty)$, i.e., $g\in AC([0,\infty))$ with
	\begin{align}\label{growthassum}
	0\leq g(u)\leq g_{1}u+g_{0}, \quad u\in [0,\infty),	
	\end{align}
			where $g_{1}$ and $g_{0}$ are fixed positive real numbers.
	\item
	We assume that
	\begin{align}\label{birthrateassum}
		0\leq a(u) \leq a_{1}(1+u), \quad u\in [0,\infty),
	\end{align}
for some constant $a_{1}>0$. 
\item			
The initial data $\xi^{{\mathrm{in}}}$ is assumed to satisfy  
\begin{align}  
	\xi^{{\mathrm{in}}} \in Y_{+}, \label{initialassum}  
\end{align}  
where $Y_{+}$ is the non-negative part of the complete normed linear space $Y$, defined as  
\begin{align}  
	Y = L^{1}\left([0, \infty); (1+u)\;du\right)\nonumber  
\end{align}  
and equipped with the norm $\|\cdot\|$, expressed as
\begin{align}  
	\|\xi\| = \int_{0}^{\infty} (1+u)|\xi(u)|\;du, \quad \xi \in Y.\nonumber  
\end{align} 
In other words, $Y_{+}=\left\{\xi\in Y \;|\; \xi\geq 0 \text{ a.e.} \right\}$.
\end{enumerate}

To facilitate our analysis, we first define the moment of order $\gamma\in \mathbb{R}$ for $\xi(t)$ as
	\begin{align}\label{momentdefn}
		M_{\gamma}(\xi(t)) &:= \int_{0}^{\infty} u^{\gamma}\xi(t,u) \,du, \quad t\ge 0.
	\end{align}
In addition, we introduce the following functional spaces for future use
\begin{align*}
	Y_{\infty}:=\left\{ \eta : [0, \infty) \rightarrow [0, \infty)\;\mid\; \eta~\text{is measurable and } \|\eta\|_{\infty}<\infty\right\}
\end{align*}
and
\begin{align*}
	Y_{r}:=\left\{\eta : [0, \infty) \rightarrow [0, \infty)\;\mid\: \eta~\text{is measurable and } 0\leq \eta(u)\leq r(u), \; u\in (0, \infty)\right\},
\end{align*}
where
\begin{align*}
	\|\eta\|_{\infty}:=\esssup_{0\leq u<\infty}\frac{|\eta(u)|}{1+u},
\end{align*}
and the function $r$ is defined in \eqref{subquadraticcoagassum} and \eqref{sublimboundcoagassum}. Clearly, the assumptions \eqref{alphaassum}, \eqref{mu}, \eqref{growthassum}, and \eqref{birthrateassum} ensure that $\alpha$, $\mu$, $g$, and $a$ belong to $Y_{\infty}$. We now define the space of weakly continuous function space as 
\begin{align*}
	C\left([0,T]; w-X\right):=\left\{f\; | \;f: [0,T] \rightarrow X \;\text{is weakly continuous}\right\},
\end{align*}
where $T\in (0, \infty)$ and $X$ is a complete normed linear space.

Before providing our main outcomes of the paper, we recall the following definition of a weak solution to \eqref{maineq.1}--\eqref{maineq.2}, which will be used in the subsequent sections.
	\begin{definition} \label{definitionofsolution}
		Consider a class of coagulation kernels $\Upsilon$ satisfying~\eqref{multiplicativecoagassum}, and let $\beta$, $\mu$, $\alpha$, $g$, $a$, and $\xi^{{\mathrm{in}}}$  be the six parameters satisfying \eqref{particlebound}--\eqref{initialassum}. For $T>0$, a weak solution to \eqref{maineq.1}--\eqref{maineq.2} is a function $\xi : [0,T]\mapsto Y_{+}$ such that
		\begin{align}
			\xi \in C\left([0,T];w-L^1([0, \infty);(1+u)du)\right) \cap L^{\infty}(0,T; Y_{+}) \label{weak formulation-0} 
		\end{align}
		and satisfies the following for every $t \in [0, T]$ and $\vartheta \in C^{1}_c\left([0,\infty)\right)$
		\begin{align} \label{weak formulation}
			\int_{0}^{\infty} \xi(t,u)\vartheta(u)\;du  &=   	\int_{0}^{\infty} \xi^{{\mathrm{in}}}(u)\vartheta(u)\;du   
			+\int_{0}^{t}\int_{0}^{\infty} \vartheta^{\prime}(u)g(u)\xi(s,u)\;duds\nonumber \\ &+\vartheta(0)\int_{0}^{t}\int_{0}^{\infty} a(u)\xi(s,u)\;duds-\int_{0}^{t}\int_{0}^{\infty} \mu(u)\vartheta(u)\xi(s,u)\;duds \nonumber \\
			&+\int_{0}^{t}\int_{0}^{\infty}\psi(\vartheta)(u_{1}) \alpha(u_{1})\xi(s,u_{1})\;\;du_{1}ds  \nonumber \\
			&+\frac{1}{2}\int_{0}^{t}\int_{0}^{\infty} \int_{0}^{\infty}\tilde{\vartheta}(u,u_{1}) \Upsilon(u,u_{1})\xi(s,u) \xi(s,u_{1})\;dudu_{1}ds,
		\end{align}
	where 
	\begin{align*}
		\psi(\vartheta)(u_{1})=\int_{0}^{u_{1}}\vartheta(u)\beta(u|u_{1})\;du-\vartheta(u_{1})
	\end{align*}
	and
		\begin{align}
			\tilde{\vartheta}(u,u_{1}):=\vartheta(u+u_{1})-\vartheta(u)-\vartheta(u_{1}).  \nonumber
		\end{align}
 If $\vartheta \in C[0,\infty)\cap W^{1,\infty}(0,\infty)$, then $\left|\vartheta^{\prime} g\right|\leq g(u)\|\vartheta\|_{ W^{1,\infty}}\leq  \|\vartheta\|_{ W^{1,\infty}}\max\left\{g_{1},g_{0}\right\}(1+u)$. Owing to \eqref{weak formulation-0}, all the terms in \eqref{weak formulation} are finite.
	\end{definition}
We now present the main results of the paper.
	\begin{theorem} \label{main theorem}
		Suppose $\Upsilon$, $\beta$, $\alpha$, $\mu$, $g$, $a$, and $\xi^{{\mathrm{in}}}$ satisfy \eqref{multiplicativecoagassum}, \eqref{particlebound}, \eqref{lmassconservation}, \eqref{alphaassum}, \eqref{localbound}, \eqref{mu},  \eqref{growthassum}, \eqref{birthrateassum}, and \eqref{initialassum}.
		\begin{enumerate}[label=(\alph*)]
			\item If $\Upsilon$ also satisfies \eqref{additivecoagassum}, then at least one weak solution $\xi$ to \eqref{maineq.1}--\eqref{maineq.2}, as defined in \Cref{definitionofsolution}, exists. 
			\item If $\Upsilon$ also satisfies \eqref{subquadraticcoagassum} and \eqref{sublimboundcoagassum}, and if $g$, $\mu$, $a$, and $\alpha$ belong to $Y_{r}$, then at least one weak solution $\xi$ to \eqref{maineq.1}--\eqref{maineq.2}, as defined in \Cref{definitionofsolution}, exists.
	  \end{enumerate}
		\end{theorem}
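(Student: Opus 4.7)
The plan is to follow the Stewart--Laurençot weak $L^{1}$-compactness scheme already advertised in the introduction, treating parts (a) and (b) in parallel and diverging only at the coagulation limit. First I would regularize \eqref{maineq.1}--\eqref{maineq.2} by truncating all unbounded data on a bounded size interval $[0,n]$: replace $\Upsilon$ by $\Upsilon_{n}:=\Upsilon\mathbf{1}_{[0,n]^{2}}$, restrict $\alpha$, $\mu$, $g$, $a$ to $[0,n]$, and use the bound \eqref{localbound} together with \eqref{growthassum}, \eqref{mu}, \eqref{birthrateassum} to render every coefficient in $L^{\infty}(0,n)$. The resulting operator on $L^{1}(0,n)$ is locally Lipschitz, so a classical fixed-point/ODE argument on the Banach space $L^{1}(0,n)$ produces a unique non-negative classical solution $\xi_{n}$. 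Non-negativity and an a priori $L^{1}$-bound (from testing with $\vartheta\equiv 1$ and controlling the boundary flux via \eqref{birthrateassum}) extend $\xi_{n}$ globally in time.

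Next comes the heart of the argument, namely a priori estimates uniform in $n$. Testing the weak formulation with $\vartheta(u)=1$ and $\vartheta(u)=u$, invoking \eqref{particlebound} for the fragmentation and \eqref{lmassconservation} for local mass conservation, exploiting the sign of the coagulation and death contributions, and bounding the boundary term by $a_{1}(1+u)$, I would obtain via Grönwall uniform bounds on $M_{0}(\xi_{n}(t))$ and $M_{1}(\xi_{n}(t))$ on $[0,T]$. To obtain weak $L^{1}$-compactness I would then use de la Vallée-Poussin twice: once applied to $\xi^{\mathrm{in}}$ to produce a convex superlinear $\Phi$ with $\int \Phi(\xi^{\mathrm{in}})\,du<\infty$, propagated in time through the coagulation/fragmentation/growth operator to yield uniform integrability on small sets; and once applied to the weight $(1+u)$ by choosing a concave weight $\varpi(u)$ with $\varpi(u)/u\to\infty$ and controlling $\int \varpi(u)\xi_{n}\,du$, which gives the tail estimate $\sup_{t,n}\int_{R}^{\infty}(1+u)\xi_{n}(t,u)\,du\to 0$ as $R\to\infty$. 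Dunford--Pettis then provides weak relative compactness of $\{\xi_{n}(t)\}$ in $L^{1}([0,\infty);(1+u)du)$ for every $t\in[0,T]$. Time equicontinuity of $t\mapsto\int\vartheta(u)\xi_{n}(t,u)\,du$ for $\vartheta\in C^{1}_{c}([0,\infty))$ is read off directly from \eqref{weak formulation} using the moment bounds and the linear-growth assumptions on the coefficients, so a refined Arzelà--Ascoli theorem (\cite[Theorem 1.3.2]{Vrabie2003}) extracts a subsequence, still labelled $\xi_{n}$, converging in $C([0,T];w-Y)$ to a candidate $\xi$.

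The main obstacle is passing to the limit in the quadratic coagulation term, and this is exactly where cases (a) and (b) diverge. All linear terms pass because weak convergence in $Y$ pairs with the bounded test functions $\vartheta$, $g\vartheta'$, $\mu\vartheta$, $a\vartheta$, and $\psi(\vartheta)\alpha$ (each of which lies in $L^{\infty}(0,\infty)$ thanks to the hypotheses on the coefficients and the compact support of $\vartheta$ for the growth term). For the coagulation term I would split $(0,\infty)^{2}=[0,R]^{2}\cup([0,R]^{2})^{c}$. On $[0,R]^{2}$ the kernel $\Upsilon$ is bounded by \eqref{multiplicativecoagassum}, so the product measure $\xi_{n}(s,u)\xi_{n}(s,u_{1})\,du\,du_{1}\,ds$ converges weakly against the bounded continuous function $\tilde{\vartheta}(u,u_{1})\Upsilon(u,u_{1})\mathbf{1}_{[0,R]^{2}}$ by a standard product-of-weakly-convergent-sequences argument (\cite{Laurencot2001}). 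The tail contribution is where the two regimes enter: in case (a), the bound \eqref{additivecoagassum} together with $\|\tilde{\vartheta}\|_{\infty}\leq 3\|\vartheta\|_{\infty}$ and the uniform bound on $M_{1}(\xi_{n})$ gives an $R$-small tail because $\iint_{u+u_{1}>R}(u+u_{1})\xi_{n}\xi_{n}\leq 2 M_{0}(\xi_{n})\int_{R/2}^{\infty}u\xi_{n}$, which vanishes uniformly in $n$ by the propagation of the de la Vallée-Poussin moment. In case (b), I replace this by Stewart's trick: write $\Upsilon\leq r(u)r(u_{1})$, note that $r/(1+u)$ is bounded and $r(u)/u\to 0$, and use $|\tilde{\vartheta}(u,u_{1})|\leq C_{\vartheta}\min\{1,uu_{1}\}$ combined with $g,\mu,a,\alpha\in Y_{r}$ to transfer control from the integrand to the weight, obtaining the same uniform tail smallness. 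Together with the weak continuity of $\xi$ inherited from the equicontinuity, the limit $\xi$ satisfies \eqref{weak formulation} for every $t\in[0,T]$ and $\vartheta\in C^{1}_{c}([0,\infty))$, which is precisely \Cref{definitionofsolution}.
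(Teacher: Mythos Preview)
Your outline matches the paper's scheme closely for part~(a), with two organizational differences worth flagging. The paper truncates the coefficients on the full half-line (setting $g_{n}=g+\tfrac{1}{n}$ so that $\int_{0^{+}}du/g_{n}<\infty$ and the Banasiak--Lamb semigroup result applies directly) rather than restricting the size variable to $[0,n]$, and it proves uniform integrability on small sets from the mild formulation $\xi_{n}(t)=S_{n}(t)\xi_{n}^{\mathrm{in}}+\int_{0}^{t}S_{n}(t-s)\mathcal{C}_{n}(\xi_{n})(s)\,ds$ together with the bound $\|S_{n}(t)\|_{Y}\le e^{\tilde{b} t}$, not by propagating a convex $\Phi(\xi_{n})$. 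The latter difference is not innocent: with the transport term and the renewal boundary condition the $\Phi$ route generates a boundary contribution at $u=0^{+}$ and terms involving $g'$, for which the hypotheses give no sign control, whereas the semigroup bound absorbs growth, fragmentation, death, and the renewal flux into a single exponential factor.

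There is a genuine gap in your treatment of part~(b). You claim weak relative compactness of $(\xi_{n}(t))$ in $L^{1}([0,\infty);(1+u)du)$ by propagating a superlinear weight $\varpi$ with $\varpi(u)/u\to\infty$, but this propagation relies on \eqref{additivecoagassum} and fails under \eqref{subquadraticcoagassum}--\eqref{sublimboundcoagassum} alone: on $(1,\infty)^{2}$ the convexity inequality gives only $(u+u_{1})\tilde{\varpi}(u,u_{1})\le 2(u_{1}\varpi(u)+u\varpi(u_{1}))$, and after multiplying by $\Upsilon\le\Upsilon_{0}(1+u)(1+u_{1})$ one is left with terms of order $u_{1}^{2}\varpi(u)$ that close no Gronwall loop. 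Accordingly the paper's \Cref{Tail-1} is stated and proved only under the hypotheses of part~(a). For part~(b) the paper first obtains $\xi_{n}\to\xi$ in $C([0,T];w\text{-}L^{1}([0,\infty)))$, using merely the $M_{1}$ bound for the unweighted tail, and then upgrades to $C([0,T];w\text{-}L^{1}([0,\infty);r(u)du))$ via the sublinearity $r(u)/u\to 0$ (\Cref{improved convergence}(b)). This is precisely why the extra hypothesis $g,\mu,a,\alpha\in Y_{r}$ appears in part~(b): the linear integrands $a$ and $\psi(\vartheta)\alpha$ in \eqref{weak formulation} grow like $1+u$ in general and are not in $L^{\infty}$ as you assert; only under $Y_{r}$ are they dominated by $r(u)$, which is what lets the limit pass with $r$-weighted rather than $(1+u)$-weighted weak convergence.
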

To establish \Cref{main theorem}, we first utilize a classical existence result from \cite[Theorem 3.5]{banasiak2009coagulation} for the approximate system~\eqref{truncated-1}--\eqref{truncated-2} and subsequently apply a weak $L^{1}$-compactness method in the space $L^1([0,\infty);(1+u)du)$. The weak $L^{1}$-compactness approach, originally developed by Stewart in \cite{Stewart1989}, was later refined in \cite{Laurencot2001, Giri2025well} for the coagulation equation with a growth or transport term.

The subsequent task in establishing the well-posedness of \eqref{maineq.1}–\eqref{maineq.2} within an appropriate analytical framework is to examine the uniqueness of the weak solution. To this end, we derive the following result. 
	\begin{theorem} \label{uniqueness theorem}
		Let all the hypotheses of \Cref{main theorem}$(a)$ be satisfied. Suppose that $\xi^{{\mathrm{in}}} \in L^1\left([0,\infty); u^{2}du\right)$, $\mu \in L^{\infty}([0,\infty))$, and assume that $g$ satisfies the following Lipschitz condition with Lipschitz constant $A>0$ 
		\begin{align}
			|g(u)-g(u_{1})|& \leq A |u-u_{1}|\quad \text{ for }\;  (u,u_{1})\in  [0,\infty)^2.  \label{Lipschitz growth}
		\end{align} 
	Then, a unique weak solution $\xi$ to \eqref{maineq.1}--\eqref{maineq.2}, as defined in \Cref{definitionofsolution}, exists and satisfies  
	\begin{align}\label{solsecondmomentbound}  
		\xi \in L^{\infty}([0,T];L^1([0, \infty);(1+u^2)du))  
	\end{align}  
	for all $T>0$.
	\end{theorem}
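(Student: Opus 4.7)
The plan is to prove \Cref{uniqueness theorem} in two stages. First I would propagate the second moment along any weak solution, and second I would exploit this bound to derive an $L^{1}$-stability estimate for the difference of two weak solutions, from which the uniqueness follows immediately.

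For the second-moment propagation, I would test the weak formulation \eqref{weak formulation} with $\vartheta_{n}(u) = u^{2}\chi(u/n)$ (with $\chi$ a smooth cutoff equal to $1$ near the origin and of compact support) and pass to the limit $n \to \infty$. The growth contribution is bounded by $2g_{1}M_{2}(\xi(s))+2g_{0}M_{1}(\xi(s))$; the boundary term at $u=0$ vanishes since $\vartheta(0)=0$; the death contribution is nonpositive; and the fragmentation contribution is nonpositive because $\psi(u^{2})(u_{1}) = \int_{0}^{u_{1}} u^{2}\beta(u|u_{1})du - u_{1}^{2} \leq u_{1}\int_{0}^{u_{1}} u\beta(u|u_{1})du - u_{1}^{2} = 0$ by \eqref{lmassconservation}. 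The coagulation contribution reduces to $\iint uu_{1}\Upsilon(u,u_{1})\xi(s,u)\xi(s,u_{1})\,du\,du_{1}$, which, after splitting the integration region into $(1,\infty)^{2}$ and its complement and using \eqref{additivecoagassum} together with \eqref{multiplicativecoagassum}, is bounded by $CM_{1}(\xi(s))M_{2}(\xi(s))+CM_{1}(\xi(s))^{2}$. Since $M_{0}$ and $M_{1}$ are uniformly controlled on $[0,T]$ from the existence proof, Gronwall's lemma yields \eqref{solsecondmomentbound}.

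For the stability, let $\xi_{1}, \xi_{2}$ be two weak solutions on $[0,T]$ with initial data in $L^{1}([0,\infty);(1+u^{2})du)$, both satisfying \eqref{solsecondmomentbound}, and set $\zeta := \xi_{1}-\xi_{2}$. The aim is to prove $\|\zeta(t)\| \leq \|\zeta(0)\|\exp(C(T)t)$ for $t\in[0,T]$, which delivers uniqueness on taking $\xi_{1}^{\mathrm{in}} = \xi_{2}^{\mathrm{in}}$. The natural formal test function for the equation satisfied by $\zeta$ is $\vartheta(t,u) = (1+u)\operatorname{sgn}(\zeta(t,u))$, which is not admissible; I would make it rigorous through a renormalization-type regularization, replacing $\operatorname{sgn}$ by a smooth, bounded, odd, nondecreasing approximation $s_{\varepsilon}$, mollifying $\zeta$ in the $u$-variable to obtain $\zeta_{\delta}$, and inserting $(1+u)s_{\varepsilon}(\zeta_{\delta})\chi(u/n)$ into \eqref{weak formulation} for each $\xi_{i}$ before subtracting and taking the successive limits $n\to\infty$, $\delta\to 0$, $\varepsilon\to 0$. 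In the resulting inequality, the growth term contributes $\int g|\zeta(s)|du$, bounded by $(g_{0}+g_{1})\|\zeta(s)\|$; the boundary term $\operatorname{sgn}(\zeta(s,0))\int_{0}^{\infty} a(u_{1})\zeta(s,u_{1})du_{1}$ is bounded by $a_{1}\|\zeta(s)\|$ via \eqref{birthrateassum}; and the death and fragmentation contributions are bounded by $C\|\zeta(s)\|$ using $\mu\in L^{\infty}([0,\infty))$ together with \eqref{alphaassum}, \eqref{particlebound}, and \eqref{lmassconservation}.

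The key nonlinear estimate is the coagulation contribution, which, using the identity $\xi_{1}(x)\xi_{1}(y) - \xi_{2}(x)\xi_{2}(y) = \zeta(x)\xi_{1}(y) + \xi_{2}(x)\zeta(y)$ together with the symmetry of $\Upsilon$, can (in the limit) be written as $\tfrac{1}{2}\iint \Upsilon(x,y)\bigl[(1+x+y)\operatorname{sgn}(\zeta(x+y)) - (1+x)\operatorname{sgn}(\zeta(x)) - (1+y)\operatorname{sgn}(\zeta(y))\bigr]\zeta(x)(\xi_{1}+\xi_{2})(y)\,dx\,dy$. Bounding $\operatorname{sgn}(\zeta(x+y))\zeta(x) \leq |\zeta(x)|$ and invoking \eqref{multiplicativecoagassum}, this is dominated by $C\iint(1+x)(1+y)^{2}|\zeta(x)|(\xi_{1}+\xi_{2})(y)\,dx\,dy$, which, owing to the propagated second moment \eqref{solsecondmomentbound}, is controlled by $C(T)\|\zeta(s)\|$. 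Combining all contributions produces a linear integral inequality in $\|\zeta(s)\|$, and Gronwall's lemma closes the stability bound. The main technical obstacle is the rigorous execution of the regularization of the non-smooth test function, specifically the DiPerna--Lions-type commutator estimate controlling $\partial_{u}(g\zeta_{\delta}) - (\partial_{u}(g\zeta))*\rho_{\delta}$, which crucially relies on the Lipschitz hypothesis \eqref{Lipschitz growth} on $g$; the second delicate point is the coagulation bound, where the second-moment propagation \eqref{solsecondmomentbound} is indispensable for converting the bilinear bound \eqref{multiplicativecoagassum} on $\Upsilon$ into a linear estimate in $\|\zeta(s)\|$.
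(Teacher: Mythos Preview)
Your proposal is correct and follows essentially the same route as the paper: a DiPerna--Lions regularization (mollification in $u$ combined with a smooth approximation of $|\,\cdot\,|$) to derive the weighted $L^{1}$ inequality \eqref{z001}, the Lipschitz condition \eqref{Lipschitz growth} to control the commutator, the second-moment bound \eqref{solsecondmomentbound} to linearize the coagulation contribution, and Gronwall to close.

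One small discrepancy worth flagging: the paper does \emph{not} propagate $M_{2}$ along an arbitrary weak solution. It obtains \eqref{solsecondmomentbound} only for the particular solution built by the compactness scheme, by applying the superlinear-moment estimate (\Cref{Tail-1}) with $j_{0}(u)=u^{2}$ at the level of the approximating sequence $(\xi_{n})$ and passing to the limit (\Cref{second moment lemma-1} and \Cref{existenceofsumkernel}). Accordingly, the uniqueness in the paper is proved only within the class of weak solutions already satisfying \eqref{solsecondmomentbound} (this is made explicit at the start of Section~\ref{uniqueness section}). Your direct propagation from \eqref{weak formulation} would, if the truncation bound $\widetilde{\vartheta_{n}}(u,u_{1})\le Cuu_{1}$ is handled carefully (the paper achieves this via the convexity inequalities \eqref{convex inequality-1}--\eqref{Mconvex inequality-2} for the cutoff $j_{R}=\min\{j_{0},j_{0}(R)\}$), give the slightly stronger conclusion that every weak solution in the sense of \Cref{definitionofsolution} automatically lies in $L^{\infty}(0,T;L^{1}((1+u^{2})du))$, hence is unique.
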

	As anticipated, the proof of~\Cref{uniqueness theorem} follows a similar strategy to those used for uniqueness results in coagulation-fragmentation and growth-coagulation equations; see \cite{Stewart1990, giri2013uniqueness, EMRR2005, Giri2025well} and the references therein. To prove uniqueness, we first establish a stability result in \Cref{continuousdependence result}. The main challenge in this proof stems from the lack of differentiability of the weak solution $\xi$. Therefore, we employ a suitable regularization technique, following the theory developed by DiPerna and Lions~\cite{diperna1989ordinary}, \cite[Appendix~6.1 and~6.2]{perthame2006transport}, and later adapted by Giri et.al in~\cite[Theorem 2.4]{Giri2025well}.
	
	Next, we analyze the long-time behavior of weak solutions to~\eqref{maineq.1}--\eqref{maineq.2}. In the classical Smoluchowski coagulation model~\cite{escobedo2003gelation, escobedo2002gelation} and the Oort-Hulst-Safronov coagulation model~\cite{lachowicz2003oort}, it has been shown that the zeroth moment $M_{0}(\xi(t))$ and the first moment $M_{1}(\xi(t))$ of the solution $\xi$ tend to zero as $t \to \infty$ when the coagulation kernel exhibits sufficiently strong growth. Consequently, in the absence of fragmentation, we anticipate a similar asymptotic result for the growth-coagulation equations~\eqref{maineq.1}--\eqref{maineq.2}. The following theorem rigorously establishes this large-time behavior, providing the precise conditions under which the moments of the solution decay over time.
	\begin{theorem}\label{Largetimebehavior}
	Let $\Upsilon$ and $\mu$ satisfy \eqref{multiplicativecoagassum} and \eqref{mu}, respectively. Assume that $\alpha \equiv 0$ and $\beta \equiv 0$, and let $g$, $a$, and $\xi^{{\mathrm{in}}}$ be parameters satisfying \eqref{growthassum}--\eqref{initialassum}. Moreover, assume that $\xi^{\mathrm{in}} \not\equiv 0$ and $\xi$ is a weak solution to \eqref{maineq.1}--\eqref{maineq.2}, as defined in \Cref{definitionofsolution}. Additionally, suppose that  
		\begin{align}\label{deathdominated}  
			g(u) \leq u\mu(u), \quad u \in (0,\infty).
		\end{align}
	Then the following properties are satisfied. 
		\begin{enumerate}[label=(\alph*)]
			\item If for every $\theta>0$, there is $\delta_{\theta}>0$  such that
			\begin{align}\label{coaglowerbound}
				\Upsilon(u,u_{1})\geq \delta_{\theta}\quad \text{ for }\quad (u,u_{1})\in (\theta, \infty)^{2},
			\end{align}
			then
			\begin{align}\label{vanishingzerothmoment}
				\lim_{t \rightarrow \infty} M_{0}(\xi(t))=0.
			\end{align}
		\item If the coagulation kernel $\Upsilon$ satisfies 
		\begin{align}\label{coaglowerboundM}
			\Upsilon(u,u_{1})\geq \Upsilon_{2}(uu_{1})^{\lambda/2}, \quad (u, u_{1})\in (0, \infty)^{2},
		\end{align}
		for some $\lambda \in (1, 2]$ and $\Upsilon_{2}>0$, then
		\begin{align}\label{vanishingfirstmoment}
			\lim_{t \rightarrow \infty} M_{1}(\xi(t))=0.
		\end{align}
		\end{enumerate}
	\end{theorem}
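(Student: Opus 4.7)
Both parts rest on two identities obtained from the weak formulation \eqref{weak formulation}. First, substitute the Lipschitz test function $\vartheta_R(u)=u\wedge R$ (admissible by density through the extension to $W^{1,\infty}$ test functions noted after \Cref{definitionofsolution}). The renewal term vanishes since $\vartheta_R(0)=0$; the fragmentation integrals vanish because $\alpha\equiv\beta\equiv 0$; subadditivity of $u\mapsto u\wedge R$ gives $\tilde\vartheta_R\leq 0$, so the coagulation contribution is non-positive; and the transport--death part equals
\[
\int_0^R\bigl(g(u)-u\mu(u)\bigr)\xi(s,u)\,du - R\int_R^\infty\mu(u)\xi(s,u)\,du\leq 0
\]
by \eqref{deathdominated} and $\mu\geq 0$. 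Hence $t\mapsto \int\vartheta_R\xi(t,u)\,du$ is non-increasing for every $R>0$, and letting $R\to\infty$ by monotone convergence yields that $t\mapsto M_1(\xi(t))$ is non-increasing, converging to some $M_1^\infty\in[0,M_1(\xi^{\mathrm{in}})]$. Second, testing against $\vartheta\equiv 1$ gives the number balance
\[
M_0(\xi(t))+\int_0^t\!\!\int \mu\xi\,du\,ds+\frac{1}{2}\int_0^t\!\!\iint\Upsilon\xi\xi\,du\,du_1\,ds=M_0(\xi^{\mathrm{in}})+\int_0^t\!\!\int a\xi\,du\,ds,
\]
together with the Markov bound $m_\theta(s):=\int_\theta^\infty\xi(s,u)\,du\leq M_1(\xi^{\mathrm{in}})/\theta$ and the birth bound $\int a\xi\leq a_1(M_0+M_1(\xi^{\mathrm{in}}))$ from \eqref{birthrateassum}.

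For part $(a)$, I invoke the coagulation lower bound \eqref{coaglowerbound} as $\iint\Upsilon\xi\xi\geq\delta_\theta m_\theta(s)^2$. Substituting into the number balance and combining with the birth bound, the resulting differential inequality for $M_0$ is analyzed in the spirit of \cite[Theorem 2.5 and Proposition 4.1]{lachowicz2003oort} and \cite[Theorem 1.1]{escobedo2002gelation} to deduce $m_\theta(t)\to 0$ as $t\to\infty$ for every fixed $\theta>0$. The small-size contribution $\int_0^\theta\xi(t,u)\,du$ is controlled by combining the transport flux $g(\theta)\xi(t,\theta)$ (which sweeps mass from $[0,\theta]$ into $(\theta,\infty)$ where it is dissipated by coagulation at rate at least $\delta_\theta$) with the death rate; sending $\theta\to 0^+$ then delivers \eqref{vanishingzerothmoment}.

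For part $(b)$, I sharpen the first identity via the pointwise bound $-\tilde\vartheta_R(u,u_1)\geq R\mathbf{1}_{\{u\geq R,\,u_1\geq R\}}$; combined with \eqref{coaglowerboundM} this yields
\[
M_1(\xi^{\mathrm{in}})\geq\frac{\Upsilon_2 R}{2}\int_0^\infty P_R(s)^2\,ds,\qquad P_R(s):=\int_R^\infty u^{\lambda/2}\xi(s,u)\,du,
\]
so $\|P_R\|_{L^2(0,\infty)}^2\leq 2M_1(\xi^{\mathrm{in}})/(\Upsilon_2 R)$. Observing that \eqref{coaglowerboundM} implies \eqref{coaglowerbound} with $\delta_\theta=\Upsilon_2\theta^\lambda$, part $(a)$ is applicable and gives $M_0(\xi(t))\to 0$. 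Arguing by contradiction: if $M_1^\infty>0$, then $M_0\to 0$ together with $M_1(\xi(t))\geq M_1^\infty$ forces the mass to migrate to large sizes, and the full dissipation $\iint\Upsilon\xi\xi\geq\Upsilon_2 M_{\lambda/2}^2$ can be bounded below using a moment interpolation (exploiting $\lambda\in(1,2]$) in a way incompatible with the time-integrated dissipation budget prescribed by the $L^2$ bound on $P_R$ and the number balance. The restriction $\lambda>1$ is decisive for this quantitative closure, producing the contradiction and hence \eqref{vanishingfirstmoment}.

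The main obstacle in both parts is to control mass concentrating in the small-size region $[0,\theta]$, where the coagulation lower bound is weakest while the renewal boundary continuously injects new particles at a rate bounded only linearly by $a(u)\leq a_1(1+u)$. In part $(a)$, the key technical step is to establish the long-time decay of $m_\theta(t)$ for every fixed $\theta>0$ and then to pass to the limit $\theta\to 0^+$ by synthesizing the transport induced by $g$ with the coagulation dissipation at intermediate scales. In part $(b)$, the crux is leveraging $\lambda>1$ through the moment interpolation linking $M_0$, $M_1$, and $M_{\lambda/2}$, together with the conclusion of part $(a)$, to turn the finiteness of $M_1(\xi^{\mathrm{in}})$ into a barrier that rules out $M_1^\infty>0$.
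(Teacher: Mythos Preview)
Your plan has a genuine structural gap in part~(a). By testing with $\vartheta\equiv 1$ you retain the renewal contribution $\vartheta(0)\int a\xi$, so your ``number balance'' reads
\[
M_0(\xi(t))+\int_0^t\!\!\int\mu\xi+\tfrac12\int_0^t\!\!\iint\Upsilon\xi\xi
=M_0(\xi^{\mathrm{in}})+\int_0^t\!\!\int a\xi,
\]
and the right-hand side is \emph{not} bounded uniformly in $t$: with only $a(u)\le a_1(1+u)$ and $M_1$ non-increasing you get $\int a\xi\le a_1(M_0+M_1(\xi^{\mathrm{in}}))$, which feeds back into $M_0$ and leaves no finite dissipation budget. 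Hence you cannot conclude $\int_0^\infty m_\theta(s)^2\,ds<\infty$, and the claimed decay $m_\theta(t)\to 0$ is unjustified. The paper avoids this obstacle altogether by choosing piecewise-linear test functions that \emph{vanish at the origin} (e.g.\ $\vartheta_\epsilon(u)=u/\epsilon$ on $[0,\epsilon]$ and $\vartheta_\epsilon=1$ beyond), so the renewal term drops out. Combined with $g(u)\le u\mu(u)$, which absorbs $\tfrac1\epsilon\int_0^\epsilon g\xi$ into $\int_0^\epsilon\mu\xi$, this yields that $G(t,\mathcal V):=\int_0^{\mathcal V}\xi(t,u)\,du$ is non-increasing in $t$ for every $\mathcal V>0$, together with the clean dissipation inequality
\[
G(t_2,\mathcal V)-G(t_1,\mathcal V)\le -\tfrac12\int_{t_1}^{t_2}\!\!\int_0^{\mathcal V}\!\!\int_0^{\mathcal V}\Upsilon\xi\xi.
\]
From this and \eqref{coaglowerbound} one gets $G(\cdot,\mathcal V)-G(\cdot,\theta)\in L^2(0,\infty)$, so the limits $G(\mathcal V)=G(\theta)\le G(0,\theta)\to 0$ as $\theta\to 0$; then $M_0(\xi(t))\le G(t,\mathcal V)+\mathcal V^{-1}M_1(\xi^{\mathrm{in}})$ finishes the proof. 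Your sketch for handling the small-size region via the ``transport flux $g(\theta)\xi(t,\theta)$'' does not work either: the renewal boundary continuously injects particles at $u=0^+$, and nothing in your argument controls that injection.

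For part~(b) your outline is too vague to be a proof. Your bound $\|P_R\|_{L^2}^2\le 2M_1(\xi^{\mathrm{in}})/(\Upsilon_2 R)$ is essentially the paper's inequality for $\int_{t_1}^{t_2}\int_K^\infty\int_K^\infty\Upsilon\xi\xi$, but the decisive step you omit is to integrate this in the level $K$ against the weight $\eta'(K)$ with $\eta(u)=(u^{1-\lambda/2}-1)_+$ (finite precisely because $\lambda>1$), which converts the family of $L^2$ bounds on $P_R$ into
\[
\int_0^\infty\Bigl(\int_2^\infty u\,\xi(s,u)\,du\Bigr)^2 ds<\infty;
\]
the small-size piece $\int_0^2 u\xi$ is handled directly from \eqref{coaglowerboundM} and the $M_0$-dissipation inequality. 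Together these give $M_1(\xi(\cdot))\in L^2(0,\infty)$, and monotonicity of $M_1$ then yields $tM_1(\xi(t))^2\le\int_0^t M_1^2$, hence $M_1(\xi(t))\to 0$. Your proposed contradiction via ``moment interpolation'' is not spelled out and, more importantly, still relies on $M_0(\xi(t))\to 0$, for which you have no valid argument (see the gap above).
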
 
The proof of \Cref{Largetimebehavior} closely follows the approach used in \cite[Theorem 1.1]{escobedo2002gelation} and \cite[Theorem 2.5 and Proposition 4.1]{lachowicz2003oort}. Notably, the functions $g$ and $\mu$ considered in~\cite[Examples 7.1 and 7.3]{ackleh2023finite} satisfy condition \eqref{deathdominated}.
\section{existence of weak solutions}\label{existence section}
		\subsection{Truncated Problem}\label{Truncated Problem}
		The proof of \Cref{main theorem} relies on a weak compactness method within the functional space $L^{1}([0, \infty);(1+u)du)$. To employ a weak $L^{1}$-compactness technique, we have to generate a sequence of solutions to certain truncated problems. To formulate the truncated problem, we first approximate the initial data $\xi^{{\mathrm{in}}}$. For this purpose, we introduce the de la Vallée-Poussin theorem (see \cite[Proposition I.1.1]{Le1977} and \cite[Theorem 7.1.6]{BLL_book}), which guarantees the existence of a convex, non-negative, non-decreasing function $j_{0}\in C^{\infty}([0,\infty))$ satisfying $j_{0}(0)=j_{0}^{\prime}(0)=0$, such that $j_{0}^{\prime}$ is concave, $j_{0}^{\prime}(u)>0$ for $u>0$, and 
			\begin{align}\label{convexinfty}
				\lim_{u\rightarrow \infty}j_{0}^{\prime}(u)=\lim_{u\rightarrow \infty}\frac{j_{0}(u)}{u}=\infty,
			\end{align}
		along with the condition 
			\begin{align}\label{convexinitialdata}
				\int_{0}^{\infty}j_{0}(u)\xi^{{\mathrm{in}}}(u)\;du <\infty.
			\end{align}
Based on \cite[Section 4]{laurenccot2007well}, there exists a sequence $\left(\xi_{n}^{{\mathrm{in}}}\right)_{n \ge 1}$ of non-negative functions in $C_{c}^{\infty}(0,\infty)$ satisfying  
	\begin{align}\label{mollificationofinitialdata}  \xi_{n}^{{\mathrm{in}}} \to \xi^{{\mathrm{in}}} \quad {\mathrm{in}} \quad L^{1}([0, \infty))\quad \text{and} \quad C_{0} := \sup_{n \ge 1} \int_{0}^{\infty} j_{0}(u)\xi_{n}^{{\mathrm{in}}}(u)\;du < \infty.  
		\end{align}  
		Furthermore, assumptions \eqref{initialassum}, \eqref{convexinfty}, and \eqref{mollificationofinitialdata} ensure that  
		\begin{align}\label{initialdatauniformbound} 
			C_{1} := \sup_{n \ge 1} \int_{0}^{\infty} (1+u)\xi_{n}^{{\mathrm{in}}}(u)\;du < \infty.  
		\end{align}	 
Assumue that $\Upsilon$, $\beta$, $\alpha$, $\mu$, $g$, $a$, and $\xi^{{\mathrm{in}}}$ satisfy \eqref{multiplicativecoagassum}, \eqref{particlebound}, \eqref{lmassconservation}, \eqref{alphaassum}, \eqref{localbound}, \eqref{mu},  \eqref{growthassum}, \eqref{birthrateassum}, and \eqref{initialassum}. For $(t,u)\in (0,\infty)^2$ and $n\geq 1$, we consider the following truncated problem
		\begin{align}	
		\partial_{t}\xi_{n}(t,u) &= -\partial_{u}(g_{n}\xi_{n})(t,u)-\mu_{n}(u)\xi_{n}(t,u)+\mathcal{F}_{n}(\xi)(t,u)+\mathcal{C}_{n}(\xi_{n})(t,u), \label{truncated-1}\\
		\lim_{u\rightarrow 0^{+}}g_{n}(u)\xi_{n}(t,u)&=\int_{0}^{\infty}a_{n} (u_{1})\xi_{n}(t,u_{1})\;du_{1},\quad \xi_{n}(0,u)=\xi_{n}^{{\mathrm{in}}}(u) \geq 0, \label{truncated-2}  
	\end{align}
		where
		\begin{align}\label{trungmubetaF}
			g_{n}:=g+\frac{1}{n},\quad \mu_{n}:=\mu\chi_{[0,n]},\quad \alpha_{n}:=\alpha\chi_{[0,n]},\quad \beta_{n}:=\beta,\quad a_{n}:=a,
		\end{align}
		\begin{align}
			\mathcal{F}_{n}(\xi_{n})(t,u)&:=-\alpha_{n}(u)\xi_{n}(t,u)+\int_{u}^{\infty} \alpha_{n}(u_{1})\beta(u|u_{1})\xi_{n}(t,u_{1})\;du_{1},\nonumber \\
		\mathcal{C}_{n}(\xi_{n})(t,u)&:=\frac{1}{2} \int_{0}^{u} \Upsilon_{n}(u-u_{1},u_{1})\xi_{n}(t,u-u_{1})\xi_{n}(t,u_{1})\;du_{1} \nonumber \\
			&\quad	-\int_{0}^{\infty} \Upsilon_{n}(u,u_{1})\xi_n(t,u)\xi_n(t,u_{1})\;du_{1},\nonumber 
		\end{align}
	and
		\begin{align}\label{truncoag}
			\Upsilon_n(u,u_{1}):&=\Upsilon(u,u_{1}) \chi_{[0,n]}(u) \chi_{[0,n]}(u_{1}),\quad (u,u_{1})\in (0,\infty)^{2}.
		\end{align}
	The function $\chi_{S}$ represents the indicator function of the set $S$. It is given by 
	\begin{align*}
		\chi_{S}(u):=\begin{cases}
			1\quad u\in S,\\
			0\quad u\notin S.
		\end{cases}
	\end{align*}
Clearly, for $n \geq 1$, the function $g_n$ satisfies the following conditions  
\begin{align}\label{gnbound}  
	\int_{0^{+}}\frac{du}{g_{n}(u)}<\infty\quad \text{and} \quad \|g_{n}\|_{\infty}\leq \|g\|_{\infty}+1.
\end{align}  
Furthermore, $\mu_{n} \in L^{\infty}(0,\infty)$ and $\Upsilon_n \in L^{\infty}((0,\infty)^{2})$. For $n\ge 1$ and $T>0$, the truncated version of weak formulation is given by 
		\begin{align} \label{truncated weak formulation}
			\int_{0}^{\infty} \xi_{n}(t,u)\vartheta(u)\;du  &=   	\int_{0}^{\infty} \xi_{n}^{\mathrm{in}}(u)\vartheta(u)\;du   
			+\int_{0}^{t}\int_{0}^{\infty} \vartheta^{\prime}(u)g_{n}(u)\xi_{n}(s,u)\;duds\nonumber \\ &+\vartheta(0)\int_{0}^{t}\int_{0}^{\infty} a_{n}(u)\xi_{n}(s,u)\;duds-\int_{0}^{t}\int_{0}^{\infty}\vartheta(u) \mu_{n}(u)\xi_{n}(s,u)\;duds \nonumber \\
			&+\int_{0}^{t}\int_{0}^{\infty}\psi(\vartheta)(u_{1}) \alpha_{n}(u_{1})\xi_{n}(s,u_{1})\;du_{1}ds  \nonumber \\
			&+\frac{1}{2}\int_{0}^{t}\int_{0}^{\infty} \int_{0}^{\infty}\tilde{\vartheta}(u,u_{1}) \Upsilon_{n}(u,u_{1})\xi_{n}(s,u) \xi_{n}(s,u_{1})\;dudu_{1}ds,
		\end{align}
for each $t\in [0, T]$ and $\vartheta \in C[0,\infty)\cap W^{1,\infty}([0,\infty))$. Owing to the properties enjoyed by $g_{n}$, $\mu_{n}$, $\alpha_{n}$, $\beta_{n}$, $\Upsilon_{n}$, and $a_{n}$,  we are now prepared to recall the following result from \cite[Theorem 3.5]{banasiak2009coagulation}.
	\begin{proposition}\label{ClassicalandMildsolution}
		For each $n\ge 1$, the truncated problem \eqref{truncated-1}--\eqref{truncated-2} has a unique, global, and strongly differentiable classical solution $\xi_{n}(t)\in \mathcal{Z}_{n}\cap Y_{+}$ for each $t\in [0,\infty)$ that satisfies the following property
		\begin{align}\label{tempmomentbound}
			\int_{0}^{\infty}\xi_{n}(t,u)(1+u)\;du\leq \left(\int_{0}^{\infty}\xi_{n}^{\mathrm{in}}(u)(1+u)\;du+\frac{C_{2}^{n}}{C_{3}^{n}}\right)e^{C_{3}^{n}t}-\frac{C_{2}^{n}}{C_{3}^{n}}\quad \text{for}\quad t\in [0,\infty),
		\end{align}
	where
	\begin{align}
		&\mathcal{Z}_{n}:=\left\{ f\in 	\mathcal{Z}_{n}^{1}\; |\;g_{n}f\in AC((0,\infty)),\; \mathcal{T}_{n}f\in \mathcal{G} \right\},\nonumber\\
		&\mathcal{Z}_{n}^{1}:=\left\{f\in Y \; |\; 	[\mathcal{B}_{n}f_{+}](u)<\infty,\; 	[\mathcal{B}_{n}f_{-}](u)<\infty\;\text{ a.e.} \right\},\nonumber\\
		&[\mathcal{B}_{n}f](u):=\int_{u}^{\infty} \alpha_{n}(u_{1})\beta_{n}(u|u_{1})f(u_{1})\;du_{1},\;f_{+}:=\sup\left\{f,0\right\},\;f_{-}:=\sup\left\{-f,0\right\}, \nonumber\\ 
		&\mathcal{T}_{n}f(u):=-\partial_{u}(g_{n}f)(u)-\mu_{n}(u)f(u) -\alpha_{n}(u)f(u),\; u\in (0,\infty), \nonumber\\
		&\mathcal{G}:=\left\{f\;|\; f: [0,\infty)\mapsto\mathbb{R}\cup \left\{-\infty,\infty\right\},\; f~\text{is~ measurable~ and finite almost everywhere} \right\},\nonumber\\
		&C_{3}^{n}:=\|a_{n}\|_{\infty}+\|g_{n}\|_{\infty}+M\max\left\{P_{\alpha},Q_{\alpha}\right\},\nonumber
	\end{align}
and
\begin{align}\label{C_{2}}
	C_{2}^{n}:=\lim_{m\rightarrow \infty}\int_{N_{m}}^{\infty}\alpha_{n}(u_{1})\xi_{n}^{{\mathrm{in}}}(u_{1})\left(\int_{0}^{N_{m}}\beta(u|u_{1})(1+u)\;du\right)\;du_{1}.
\end{align}
Here, $(N_{m})_{m\ge 1}$ is a sequence such that $N_{m}\rightarrow \infty$ as $m\rightarrow \infty$. Thanks to \eqref{localbound} and the compact support of the initial data $\xi_{n}^{{\mathrm{in}}}$, the limit in \eqref{C_{2}} exists, and we have $C_{2}^{n}=0$. Therefore, using \eqref{initialdatauniformbound} and $C_{2}^{n}=0$ in \eqref{tempmomentbound}, we obtain
	\begin{align}\label{momentbound}
	\int_{0}^{\infty}\xi_{n}(t,u)(1+u)\;du\leq C_{1}e^{C_{3}t}\quad \text{for}\quad t\in [0,\infty),\;\; n\ge 1,
\end{align}
where $C_{3}:=\|a\|_{\infty}+\|g\|_{\infty}+M\max\left\{P_{\alpha},Q_{\alpha}\right\}+1$. Moreover, for $n\ge 1$ and $t\in [0, \infty)$, $\xi_{n}$ satisfies the following mild formulation
\begin{align}\label{mildformulation}
	\xi_{n}(t,u)=S_{n}(t)\xi_{n}^{{\mathrm{in}}}(u)+\int_{0}^{t}S_{n}(t-s)\mathcal{C}_{n}(\xi_{n})(s,u)\;ds,
\end{align}
where $\left\{S_{n}(t)\right\}_{t\geq 0}$ is a positive semigroup associated with the problem \eqref{truncated-1}--\eqref{truncated-2} on $Y$ with
\begin{align}\label{Snormbound}
	\|S_{n}(t)\|_{Y}\leq e^{\tilde{b}_{n}t},\quad t\ge 0.
\end{align}
Here, $\|\cdot\|_{Y}$ denotes the operator norm on $Y$ and
\begin{align}
	\tilde{b}_{n}&:=\left(\|a_{n}\|_{\infty}+\|g_{n}\|_{\infty}+(M+1)\|\alpha_{n}\|_{\infty}\right)\leq\tilde{b},\quad n\ge 1,\label{tildebnbound}\\
	 \tilde{b}&:=\left(\|a\|_{\infty}+\|g\|_{\infty}+(M+1)\|\alpha\|_{\infty}+1\right)\nonumber.
\end{align}
\end{proposition}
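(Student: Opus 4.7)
The plan is to verify that the truncated coefficients $(g_n, \mu_n, \alpha_n, \beta_n, \Upsilon_n, a_n)$ defined in \eqref{trungmubetaF}--\eqref{truncoag} fall within the functional framework of Banasiak--Lamb, so that \cite[Theorem~3.5]{banasiak2009coagulation} applies directly and yields the classical solution $\xi_n$ together with the moment bound \eqref{tempmomentbound} and the mild representation \eqref{mildformulation}. Concretely, I would check that $g_n = g + 1/n$ lies in $AC([0,\infty))$, is strictly positive on $[0,\infty)$ with $\int_{0^{+}} du/g_n(u) < \infty$ thanks to $g_n(u) \geq 1/n$, and satisfies the sublinear bound \eqref{gnbound}; that $\mu_n$ and $\alpha_n$ are bounded because they are truncations of $\mu$ and $\alpha$ to $[0,n]$, where \eqref{localbound} applies; that $\Upsilon_n$ is bounded on $(0,\infty)^2$ because of the truncation to $[0,n]^2$; and that $a$ and $\beta$ continue to satisfy \eqref{birthrateassum}, \eqref{particlebound}, and \eqref{lmassconservation}. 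With these verifications in hand, \cite[Theorem~3.5]{banasiak2009coagulation} supplies existence and uniqueness of a strongly differentiable classical solution $\xi_n \in \mathcal{Z}_n \cap Y_{+}$, together with the semigroup bound \eqref{Snormbound}.

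The moment inequality \eqref{tempmomentbound} will be derived either as part of the appeal to \cite[Theorem~3.5]{banasiak2009coagulation} or, equivalently, by testing the weak formulation \eqref{truncated weak formulation} with $\vartheta(u) = 1+u$: the key observation is that the coagulation contribution $\tilde{\vartheta}(u,u_{1}) = -1$ produces a non-positive term, the fragmentation integral simplifies via \eqref{lmassconservation} and \eqref{particlebound} to $\psi(\vartheta)(u_{1}) = n(u_{1}) - 1 \leq M-1$, and the $\mu_n$ term is non-positive, so that differentiating in time yields
\begin{align*}
\frac{d}{dt}\int_{0}^{\infty}(1+u)\xi_{n}(t,u)\,du \leq C_{3}^{n}\int_{0}^{\infty}(1+u)\xi_{n}(t,u)\,du + C_{2}^{n},
\end{align*}
and Gr\"onwall's lemma delivers \eqref{tempmomentbound}.

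To see that $C_{2}^{n} = 0$, I would use that $\xi_{n}^{{\mathrm{in}}} \in C_{c}^{\infty}(0,\infty)$ has compact support, say contained in $[0,K_{n}]$; choosing $m$ large enough that $N_{m} > K_{n}$ makes the outer integrand in \eqref{C_{2}} vanish identically (since $\alpha_n$ is locally bounded by \eqref{localbound} and $\xi_n^{\mathrm{in}}$ vanishes on $(K_n,\infty)$), so the limit equals zero. Combining this with the uniform estimate \eqref{initialdatauniformbound} and the observation
\begin{align*}
C_{3}^{n} \leq \|a\|_{\infty} + \|g\|_{\infty} + 1 + M\max\{P_{\alpha},Q_{\alpha}\} = C_{3},
\end{align*}
which follows because truncation of $\mu$ and $\alpha$ does not enlarge the relevant weighted norms and because $\|g_{n}\|_{\infty} \leq \|g\|_{\infty}+1$ by \eqref{gnbound}, immediately yields the $n$-independent bound \eqref{momentbound}. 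The mild formulation \eqref{mildformulation} and the semigroup bound \eqref{Snormbound} come out of the same linear theory: the operator $\mathcal{T}_{n}$ supplemented with the fragmentation gain and the renewal boundary condition generates a positive $C_{0}$-semigroup $\{S_{n}(t)\}_{t\geq 0}$ on $Y$, its growth bound $\tilde{b}_{n}$ is read off from the drift-birth-fragmentation budget as in \eqref{tildebnbound}, and the coagulation operator $\mathcal{C}_{n}$ is treated via Duhamel's principle as a locally Lipschitz perturbation on bounded subsets of $Y_{+}$.

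The main obstacle is not the classical existence itself, which is almost a direct citation, but rather ensuring that every $n$-dependent constant either vanishes (as for $C_{2}^{n}$) or is dominated by an $n$-independent constant (as for $C_{3}^{n}$ and $\tilde{b}_{n}$). This uniformity is essential for the weak compactness arguments that follow in the next subsection, and it hinges critically on the careful choices $g_{n} = g + 1/n$, $\alpha_{n} = \alpha\chi_{[0,n]}$, and the mollification of the initial data. Once these uniform bounds are secured, the remainder reduces to verifying the pointwise formulas defining $\mathcal{Z}_{n}$, $\mathcal{T}_{n}$, and the semigroup, which can be checked by direct calculation using \eqref{alphaassum}, \eqref{mu}, and \eqref{growthassum}.
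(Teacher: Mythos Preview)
Your proposal is correct and matches the paper's approach exactly: the paper does not give a separate proof of this proposition but simply recalls \cite[Theorem~3.5]{banasiak2009coagulation} after checking (in the paragraph preceding the statement and in \eqref{gnbound}) that the truncated coefficients $g_n$, $\mu_n$, $\alpha_n$, $\beta_n$, $\Upsilon_n$, and $a_n$ fall within its hypotheses. Your additional sketch of the moment bound via the weak formulation and Gr\"onwall, and your explicit justification that $C_2^n=0$ from the compact support of $\xi_n^{\mathrm{in}}$, are slightly more detailed than the paper but entirely in the same spirit.
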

The following lemma follows directly from \Cref{ClassicalandMildsolution}.
\begin{lemma}\label{TruncatedWeakExistence}
	For each $n\ge 1$ and $T>0$, the unique global, differentiable classical solution to the truncated problem~\eqref{truncated-1}--\eqref{truncated-2} satisfies \eqref{truncated weak formulation}, and the following holds
	\begin{align}\label{Massconservation}
		\int_{0}^{\infty}u\xi_{n}(t,u)\;du=\int_{0}^{\infty}u\xi_{n}^{{\mathrm{in}}}(u)\;du+ \int_{0}^{t}\int_{0}^{\infty} \left(g_{n}(u)-u\mu_{n}(u)\right)\xi_{n}(s,u)\;duds,
	\end{align}
for each $t\in [0,T]$.
\end{lemma}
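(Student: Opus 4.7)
The plan is to derive both assertions directly from the pointwise equation \eqref{truncated-1} and the boundary condition \eqref{truncated-2} satisfied by the classical solution $\xi_n$ provided by \Cref{ClassicalandMildsolution}. For the weak formulation, I would fix $\vartheta \in C[0,\infty)\cap W^{1,\infty}([0,\infty))$, multiply \eqref{truncated-1} by $\vartheta$, and integrate over $(0,\infty)$. The strong differentiability of $\xi_n$ permits interchanging $\partial_t$ with the integral against $\vartheta$. For the transport term, integration by parts on $(0,\infty)$ is legitimate because $g_n\xi_n \in AC((0,\infty))$ (by the definition of $\mathcal{Z}_n$) and $g_n\xi_n \in L^1(0,\infty)$ (from $\xi_n\in Y$ together with the linear bound $g_n(u)\le g_1 u+g_0+1$); the boundary contribution at $u=\infty$ vanishes, while the one at $u=0^+$ gives $\vartheta(0)\int_0^\infty a_n(u_1)\xi_n(s,u_1)\,du_1$ via \eqref{truncated-2}. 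For the fragmentation term, Fubini's theorem (justified by the local boundedness of $\alpha_n$, the bound \eqref{particlebound} on $\beta$, and the first-moment bound \eqref{momentbound}) yields $\int_0^\infty \psi(\vartheta)(u_1)\alpha_n(u_1)\xi_n(s,u_1)\,du_1$. The coagulation term is symmetrized in the classical way: the change of variables $(u,u_1)\mapsto(u-u_1,u_1)$ in the gain integral, combined with the symmetry of $\Upsilon_n$ and a reordering of the two single integrals, produces the compact form $\tfrac12\int_0^\infty\int_0^\infty \tilde{\vartheta}(u,u_1)\Upsilon_n(u,u_1)\xi_n(s,u)\xi_n(s,u_1)\,du\,du_1$. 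A time integration over $[0,t]$ then yields \eqref{truncated weak formulation}.

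For the mass balance identity \eqref{Massconservation}, I would formally take $\vartheta(u)=u$. This choice annihilates $\vartheta(0)$, the fragmentation factor $\psi(\vartheta)(u_1)=\int_0^{u_1}u\beta(u|u_1)\,du-u_1=0$ by local mass conservation \eqref{lmassconservation}, and the coagulation factor $\tilde{\vartheta}(u,u_1)=(u+u_1)-u-u_1=0$, leaving only the initial data, transport, and mortality contributions, which are precisely those appearing in \eqref{Massconservation}.

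The main obstacle is that $\vartheta(u)=u$ does not lie in $W^{1,\infty}([0,\infty))$, so the substitution is not directly admissible in \eqref{truncated weak formulation}. I would resolve this by choosing cutoffs $\phi_R\in C^\infty_c([0,\infty);[0,1])$ with $\phi_R\equiv 1$ on $[0,R]$, $\phi_R\equiv 0$ on $[2R,\infty)$, and $|\phi_R'|\le 2/R$, and applying the already-established identity \eqref{truncated weak formulation} to $\vartheta_R(u):=u\phi_R(u)\in C\cap W^{1,\infty}$. The key observation is that as soon as $R\ge 2n$, the truncations in $\alpha_n$ and $\Upsilon_n$ restrict the fragmentation and coagulation integrals to $\{u_1\le n\}$ and $\{(u,u_1)\in[0,n]^2\}$ respectively, where $\vartheta_R$ coincides with $\vartheta(u)=u$; hence $\psi(\vartheta_R)\,\alpha_n\equiv 0$ pointwise and $\tilde{\vartheta}_R\,\Upsilon_n\equiv 0$ pointwise, not merely in the limit. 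Passing $R\to\infty$ in the remaining terms is then a straightforward application of the Lebesgue dominated convergence theorem: using $|\phi_R(u)+u\phi_R'(u)|\le 3$, the uniform first-moment bound \eqref{momentbound}, and the linear growth estimates on $g_n$ and $\mu_n$ provide integrable majorants, while $u\phi_R(u)\to u$ and $\phi_R(u)+u\phi_R'(u)\to 1$ pointwise. This yields \eqref{Massconservation} and completes the proof.
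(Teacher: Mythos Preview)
Your proposal is correct and follows essentially the same route as the paper: derive \eqref{truncated weak formulation} from the classical equation (the paper simply says ``clearly'' here, whereas you spell out the integration-by-parts and Fubini steps), then obtain \eqref{Massconservation} by inserting a cutoff $\vartheta_R$ approximating $u$ and passing to the limit via dominated convergence. Your treatment of the fragmentation and coagulation terms is in fact slightly slicker than the paper's: the paper bounds $|\psi(\vartheta_R)|$ and $|\widetilde{\vartheta_R}|$ and invokes DCT together with the $L^\infty$ bounds on $\alpha_n$, $\Upsilon_n$, while you observe that the compact support of $\alpha_n$ and $\Upsilon_n$ forces these contributions to vanish identically once $R\ge 2n$, bypassing the need for a limit argument there. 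One minor arithmetic slip: with $|\phi_R'|\le 2/R$ and $\mathrm{supp}\,\phi_R'\subset[R,2R]$ you get $|u\phi_R'(u)|\le 4$, hence $|\phi_R+u\phi_R'|\le 5$ rather than $3$; this of course does not affect the argument.
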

\begin{proof}
	Clearly, for each $n\ge 1$ and $T>0$, the unique classical solution to \eqref{truncated-1}--\eqref{truncated-2} satisfies \eqref{truncated weak formulation} for each $t\in [0, T]$ and $\vartheta \in C[0,\infty)\cap W^{1,\infty}([0,\infty))$,  since every differentiable classical solution to~\eqref{truncated-1}--\eqref{truncated-2} satisfies the weak formulation~\eqref{truncated weak formulation}. Next, to show \eqref{Massconservation}, we consider $t\in [0, T]$ and $\vartheta_{R}\in C_{c}^{1}([0,\infty))$ with 
	\begin{align*}
		\vartheta_{R}(u)=u~\text{ for }~0\leq u \leq R,~\vartheta_{R}(u)=0~\text{ for }~u\ge 2R,
	\end{align*}
and 
	 \begin{align*}
	 	0\leq \vartheta_{R}(u)\leq 2u,\quad |\vartheta_{R}^{\prime}(u)|\leq 2,~\text{ for }~u\in [0,\infty).
	 \end{align*}
	Plugging $\vartheta_{R}$ into~\eqref{truncated weak formulation}, we obtain 
	\begin{align}
		\int_{0}^{\infty} \xi_{n}(t,u)\vartheta_{R}(u)\;du  &=   	\int_{0}^{\infty} \xi_{n}^{{\mathrm{in}}}(u)\vartheta_{R}(u)\;du   
		+\int_{0}^{t}\int_{0}^{\infty} \vartheta_{R}^{\prime}(u)g_{n}(u)\xi_{n}(s,u)\;duds\nonumber \\ &-\int_{0}^{t}\int_{0}^{\infty}\vartheta_{R}(u) \mu_{n}(u)\xi_{n}(s,u)\;duds +\int_{0}^{t}\int_{0}^{\infty}\psi(\vartheta_{R})(u_{1}) \alpha_{n}(u_{1})\xi_{n}(s,u_{1})\;du_{1}ds  \nonumber \\
		&+\frac{1}{2}\int_{0}^{t}\int_{0}^{\infty} \int_{0}^{\infty}\widetilde{\vartheta_{R}}(u,u_{1}) \Upsilon_{n}(u,u_{1})\xi_{n}(s,u) \xi_{n}(s,u_{1})\;dudu_{1}ds,\label{tempmassconservationlemma}
	\end{align}
	where 
	\begin{align*}
		\psi(\vartheta_{R})(u_{1})=\int_{0}^{u_{1}}\vartheta_{R}(u)\beta(u|u_{1})\;du-\vartheta_{R}(u_{1}).
	\end{align*}
	Since 
		\begin{align*}
			&\xi_{n}(t)\in Y_{+},\; t\in [0,T],\quad \xi_{n}^{{\mathrm{in}}}\in Y_{+},\quad \|\alpha_{n}\|_{L^{\infty}(0,\infty)} \leq P_{\alpha}n+Q_{\alpha},\\
			 &\|\Upsilon_{n}\|_{L^{\infty}(0,\infty)} \leq \Upsilon_{0}(1+n)^{2},\; \|\mu_{n}\|_{L^{\infty}(0,\infty)} \leq P_{\mu}n+Q_{\mu},
		\end{align*}
	 with  
	\begin{equation*}  
		\widetilde{\eta_{R}}(u,u_{1}) = 0, \quad u+u_{1} \in (0,R), \qquad \big|\widetilde{\eta_{R}}(u,u_{1})\big| \leq 4 (u+u_{1}), \quad (u,u_{1}) \in (0,\infty)^2,  
	\end{equation*}  
	and, by \eqref{lmassconservation},  
	\begin{equation*}  
		\psi(\vartheta_{R})(u_{1}) = 0 \quad \text{for} \quad 0 \leq u_{1} \leq R,\quad |\psi(\vartheta_{R})(u_{1})|\leq 4u_{1}, \; u_{1}\in (0,\infty),  
	\end{equation*}  
	we establish \eqref{Massconservation} by passing to the limit as $R \to \infty$ in \eqref{tempmassconservationlemma}, utilizing \eqref{growthassum}, \eqref{momentbound}, and Lebesgue’s Dominated Convergence Theorem (DCT).  	 
\end{proof}
\subsection{Weak Compactness} \label{Global existence of weak solutions}
We begin by analyzing the tail behavior of the sequence of solutions $\left(\xi_{n}\right)_{n\ge 1}$ to equations \eqref{truncated-1}--\eqref{truncated-2}, utilizing estimates on super-linear moments in the following lemma. 
	\begin{lemma} \label{Tail-1}
		Suppose all the assumptions of \Cref{main theorem}$(a)$ hold. If $\xi_{n}$ is a weak solution to \eqref{truncated-1}--\eqref{truncated-2} for each $n\ge 1$ and $T\in (0,\infty)$, then there exists $L(T)>0$ that depends only on $C_{0}$, $C_{1}$, $C_{3}$, $\|g\|_{\infty}$, $\Upsilon_{0}$, $\Upsilon_{1}$, $j_{0}$, and $T$ such that
		\begin{align}
			\int_{0}^{\infty} j_{0}(u)\xi_{n}(t,u) \;du \leq L(T), \label{j0}
		\end{align}
	for each $t\in [0,T]$.
Furthermore, for every $\epsilon \in (0,1)$, there is $R(\epsilon)\geq 1$ such that 
	\begin{align} \label{tailbehaviour}
		\sup_{n\geq 1} \sup_{t\in [0,T]}	\int_{R(\epsilon)}^{\infty} u\xi_{n}(t,u)\;du  \leq \epsilon.  
	\end{align}	
	\end{lemma}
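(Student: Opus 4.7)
The plan is to test the weak formulation~\eqref{truncated weak formulation} with a smooth, compactly-supported truncation $\vartheta_{R}$ of the de la Vallée-Poussin function $j_{0}$ (arranged so $\vartheta_{R}\nearrow j_{0}$ pointwise with $|\vartheta_{R}'(u)|\leq j_{0}'(1)(1+u)$), and then to pass $R\to\infty$ with the help of~\eqref{momentbound} and Lebesgue's dominated convergence theorem, producing an evolution identity for $t\mapsto\int_{0}^{\infty}j_{0}(u)\xi_{n}(t,u)du$. Three of the contributions in this identity are immediate: because $j_{0}(0)=0$, the renewal boundary term vanishes; because $\mu_{n},j_{0},\xi_{n}\geq 0$, the death contribution is non-positive; and because $j_{0}$ is convex with $j_{0}(0)=0$, the ratio $u\mapsto j_{0}(u)/u$ is non-decreasing, so $j_{0}(u)\leq (u/u_{1})j_{0}(u_{1})$ for $0<u\leq u_{1}$, which combined with~\eqref{lmassconservation} yields $\psi(j_{0})(u_{1})\leq 0$ and makes the fragmentation contribution non-positive as well.

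For the growth and coagulation contributions I would rely on three estimates extracted from the concavity of $j_{0}'$ with $j_{0}'(0)=0$: first, $j_{0}'(u)/u$ is non-increasing, so $j_{0}'(u)\leq j_{0}'(1)(1+u)$; second, $j_{0}'(s)\geq (s/u)j_{0}'(u)$ for $0\leq s\leq u$, which integrates to $uj_{0}'(u)\leq 2j_{0}(u)$; and third, $j_{0}'$ is subadditive, which by splitting $\int_{0}^{u}j_{0}'(u_{1}+t)dt$ gives
\begin{equation*}
\tilde{j_{0}}(u,u_{1})\;\leq\;\min\bigl\{u_{1}j_{0}'(u),\;u\,j_{0}'(u_{1})\bigr\},\qquad (u,u_{1})\in(0,\infty)^{2}.
\end{equation*}
With~\eqref{growthassum}, \eqref{gnbound}, and~\eqref{momentbound}, the growth contribution is bounded by $C(T)+2g_{1}\int_{0}^{\infty}j_{0}\xi_{n}du$. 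For the coagulation contribution I would split $(0,\infty)^{2}$ into $[0,1]^{2}$, the two mixed strips, and $(1,\infty)^{2}$: on the first three sub-regions, \eqref{multiplicativecoagassum} combined with~\eqref{momentbound} yields an $O(1+\int j_{0}\xi_{n})$ bound; on $(1,\infty)^{2}$, symmetry reduces to $u\geq u_{1}\geq 1$, where~\eqref{additivecoagassum} gives $\Upsilon\leq 2\Upsilon_{1}u$ and the asymmetric half of the bound above gives $\tilde{j_{0}}\leq u_{1}j_{0}'(u)$, so the double integral factors cleanly as $4\Upsilon_{1}\int_{0}^{\infty}uj_{0}'(u)\xi_{n}du\cdot\int_{0}^{\infty}u_{1}\xi_{n}du_{1}\leq 8\Upsilon_{1}C_{1}e^{C_{3}T}\int_{0}^{\infty}j_{0}\xi_{n}du$.

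Assembling the pieces yields a Gronwall-type inequality
\begin{equation*}
\frac{d}{dt}\int_{0}^{\infty}j_{0}(u)\xi_{n}(t,u)du\;\leq\;C_{1}(T)+C_{2}(T)\int_{0}^{\infty}j_{0}(u)\xi_{n}(t,u)du,\qquad t\in[0,T],
\end{equation*}
with constants depending only on $C_{0},C_{1},C_{3},\|g\|_{\infty},\Upsilon_{0},\Upsilon_{1},j_{0},T$; Gronwall together with~\eqref{mollificationofinitialdata} then delivers \eqref{j0}. The tail estimate~\eqref{tailbehaviour} follows directly from~\eqref{j0}: setting $\delta(R):=\inf_{u\geq R}j_{0}(u)/u$, the super-linear growth~\eqref{convexinfty} gives $\delta(R)\to\infty$ as $R\to\infty$, and the inequality $u\leq j_{0}(u)/\delta(R)$ for $u\geq R$ yields
\begin{equation*}
\int_{R}^{\infty}u\,\xi_{n}(t,u)du\;\leq\;\frac{1}{\delta(R)}\int_{R}^{\infty}j_{0}(u)\xi_{n}(t,u)du\;\leq\;\frac{L(T)}{\delta(R)}
\end{equation*}
uniformly in $n\geq 1$ and $t\in[0,T]$; picking $R(\epsilon)$ so that $\delta(R(\epsilon))\geq L(T)/\epsilon$ closes the argument.

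The main obstacle is controlling the coagulation contribution on $(1,\infty)^{2}$ under the borderline linear growth~\eqref{additivecoagassum}: the symmetric bound $\tilde{j_{0}}\leq u_{1}j_{0}'(u)+u j_{0}'(u_{1})$ is too crude and produces a spurious $\int u^{2}\xi_{n}du$ factor that is not a priori controlled. The resolution, sketched above, is to keep only the asymmetric half of $\min\{u_{1}j_{0}'(u),u j_{0}'(u_{1})\}$ after reducing by symmetry to $u\geq u_{1}\geq 1$ and to absorb $u+u_{1}\leq 2u$ from $\Upsilon$ into the $u$ of $j_{0}'(u)$, leaving a product of quantities already available from~\eqref{momentbound} and from $\int j_{0}\xi_{n}$ via $uj_{0}'(u)\leq 2j_{0}(u)$.
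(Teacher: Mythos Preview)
Your estimates for the individual contributions are correct and yield the right Gronwall structure: the renewal term vanishes since $j_0(0)=0$; the death term is non-positive; the fragmentation term is non-positive via convexity of $j_0$ and~\eqref{lmassconservation}; the growth term is controlled via $uj_0'(u)\leq 2j_0(u)$; and the coagulation term is handled by $\tilde{j_0}\leq\min\{u_1j_0'(u),\,uj_0'(u_1)\}$ together with the asymmetric reduction on $(1,\infty)^2$. The derivation of~\eqref{tailbehaviour} from~\eqref{j0} is also fine.

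The gap is in the order of operations. You propose to test with a compactly supported $\vartheta_R$ and then pass $R\to\infty$ using~\eqref{momentbound} and dominated convergence to obtain an evolution \emph{identity} for $\int_0^\infty j_0(u)\xi_n(t,u)\,du$, only afterwards estimating the terms and applying Gronwall. But~\eqref{momentbound} controls only $\int(1+u)\xi_n$, whereas $j_0$ is super-linear by~\eqref{convexinfty}; no dominating function is available for the left-hand side, and with your bound $|\vartheta_R'(u)|\le j_0'(1)(1+u)$ the growth integrand is only majorised by $C(1+u)^2\xi_n$, which needs a second-moment bound you do not yet have. Passing to the limit in the identity therefore presupposes the very finiteness of $\int j_0\xi_n(t)$ that the lemma asserts.

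The remedy, which is what the paper does, is to reverse the order: keep the truncated test function $j_R(u):=\min\{j_0(u),j_0(R)\}\in C[0,\infty)\cap W^{1,\infty}(0,\infty)$, observe that your convexity estimates (or the variants~\eqref{convex inequality-1}--\eqref{convex inequality-3}) hold for $j_R$ with constants independent of $R$, derive an integral Gronwall inequality yielding $\int_0^\infty j_R(u)\xi_n(t,u)\,du\leq L(T)$ uniformly in $R>n$, and only then let $R\to\infty$ via Fatou's lemma. Your estimates transfer to $j_R$ essentially unchanged, so this is a reordering rather than a new idea. On $(1,\infty)^2$ the paper uses the symmetric inequality $(u+u_1)\tilde{j_R}\leq 2\bigl(u_1 j_R(u)+u j_R(u_1)\bigr)$, which combines directly with~\eqref{additivecoagassum}; your asymmetric route via $uj_0'(u)\leq 2j_0(u)$ reaches the same conclusion and is equally valid.
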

	\begin{proof}
	In the entire proof, $L(T)$ represents a positive constant that depends only on $C_{0}$, $C_{1}$, $C_{3}$, $\|g\|_{\infty}$, $\Upsilon_{0}$, $\Upsilon_{1}$, $j_{0}$, and $T$. Fix $n\in \mathbb{N}$. For $R>n$, we define $j_{R}(u):=\min(j_{0}(u),j_{0}(R))$,  $u\in [0,\infty)$. Then $j_{R}$ belongs to $C[0,\infty)\cap W^{1,\infty}([0,\infty))$, and the following inequalities hold, as established in \cite[Lemma A.2]{Laurencot2001}, \cite[Proposition 2.14]{laurenccot2014weak}, and \cite[Lemma 3.3]{Giri2021weak}, for $(u,u_{1})\in (0,\infty)^2$ 
		\begin{align}
			(u+u_{1})\left(j_{R}(u+u_{1})-j_{R}(u)-j_{R}(u_{1})\right)&\leq 2\left(u_{1}j_{R}(u)+uj_{R}(u_{1})\right),  \label{convex inequality-1}\\
			0\leq (1+u)j_{R}^{\prime}(u)&\leq 3j_{R}(u)+j_{0}(1), \label{convex inequality-2}\\
			j_{R}(u+u_{1})-j_{R}(u)-j_{R}(u_{1})&\leq j_{0}^{\prime\prime}(0)uu_{1}, \label{Mconvex inequality-1}\\
				j_{R}(u+u_{1})-j_{R}(u)-j_{R}(u_{1})&\leq 4u\frac{j_{R}(u_{1})}{u_{1}},\;0<u\leq u_{1},  \label{Mconvex inequality-2}
		\end{align}
	and 
	\begin{align}
		\int_{0}^{u_{1}}j_{R}(u)\beta(u|u_{1})\;du &\leq j_{R}(u_{1}),\;u_{1}\in (0,R). \label{convex inequality-3}
	\end{align}
	Next, by choosing $\vartheta =j_{R}$ in \eqref{truncated weak formulation}, we obtain
			\begin{align} 
			\int_{0}^{\infty} j_{R}(u)\xi_{n}(t,u)\;du  &=   	\int_{0}^{\infty} j_{R}(u)\xi_{n}^{{\mathrm{in}}}(u)\;du   
			+\int_{0}^{t}\int_{0}^{\infty} j_{R}^{\prime}(u)g_{n}(u)\xi_{n}(s,u)\;duds\nonumber \\ &+j_{R}(0)\int_{0}^{t}\int_{0}^{\infty} a_{n}(u)\xi_{n}(s,u)\;duds-\int_{0}^{t}\int_{0}^{\infty} \mu_{n}(u)j_{R}(u)\xi_{n}(s,u)\;duds \nonumber \\
			&+\int_{0}^{t}\int_{0}^{\infty}\left(\int_{0}^{u_{1}}j_{R}(u)\beta(u|u_{1})\;du-j_{R}(u_{1})\right) \alpha_{n}(u_{1})\xi_{n}(s,u_{1})\;du_{1}ds \nonumber \\
			&+\frac{1}{2}\int_{0}^{t}\int_{0}^{\infty} \int_{0}^{\infty}\tilde{j_{R}}(u,u_{1}) \Upsilon_{n}(u,u_{1})\xi_{n}(s,u) \xi_{n}(s,u_{1})\;dudu_{1}ds,\nonumber
		\end{align}
		for every $t\in [0,T]$, where
		\begin{align}
			\widetilde{j_R}(u,u_{1}):=j_{R}(u+u_{1})-j_{R}(u)-j_{R}(u_{1}). \nonumber 
		\end{align}
		Using \eqref{mollificationofinitialdata} and \eqref{convex inequality-3}, along with the facts that the support of $\alpha_{n}$ is a subset of $[0,R]$, $j_{R}(0)=0$, and that $\xi_{n}$, $\mu_{n}$, and $j_{R}$ are non-negative, we deduce the following inequality
		\begin{align}
			\int_{0}^{\infty} j_{R}(u)\xi_n(t,u) \;du	&\leq C_{0}+\int_{0}^{t}\int_{0}^{\infty} j_{R}^{\prime}(u)g_{n}(u)\xi_{n}(s,u)\;duds\nonumber\\
			&\quad +\frac{1}{2}\int_{0}^{t}\int_{0}^{\infty} \int_{0}^{\infty} \widetilde{j_R}(u,u_{1}) \Upsilon_{n}(u,u_{1})\xi_{n}(s,u) \xi_{n}(s,u_{1}) \;dudu_{1} ds.\nonumber
	 \end{align}
By utilizing~\eqref{Mconvex inequality-1}, \eqref{Mconvex inequality-2}, \eqref{convex inequality-1}, \eqref{gnbound}, \eqref{convex inequality-2}, \eqref{multiplicativecoagassum}, \eqref{additivecoagassum}, and \eqref{momentbound} in the above inequality, we derive the following estimate
	\begin{align}
			\int_{0}^{\infty} j_{R}(u)\xi_{n}(t,u) \;du	&\leq C_{0}+\|g_{n}\|_{\infty}\int_{0}^{t}\int_{0}^{\infty}(1+u) j_{R}^{\prime}(u)\xi_{n}(s,u)\;duds\nonumber\\
			&\quad +\frac{j_{0}^{\prime\prime}(0)}{2}\int_{0}^{t}\int_{0}^{1} \int_{0}^{1}  \Upsilon_{n}(u,u_{1})\xi_{n}(s,u) \xi_{n}(s,u_{1}) \;dudu_{1} ds\nonumber\\
			&\quad +4\int_{0}^{t}\int_{1}^{\infty} \int_{0}^{1} \frac{u}{u_{1}} j_{R}(u_{1}) \Upsilon_{n}(u,u_{1})\xi_{n}(s,u) \xi_{n}(s,u_{1}) \;dudu_{1} ds\nonumber\\
			&\quad +\int_{0}^{t}\int_{1}^{\infty} \int_{1}^{\infty} \left( \frac{u_{1}j_{R}(u)+uj_{R}(u_{1})}{u+u_{1}}\right) \Upsilon_{n}(u,u_{1})\xi_{n}(s,u) \xi_{n}(s,u_{1}) \;dudu_{1} ds\nonumber\\
			&\leq C_{0}+\left(1+\|g\|_{\infty}\right)\int_{0}^{t}\int_{0}^{\infty}\left(3j_{R}(u)+j_{0}(1)\right) \xi_{n}(s,u)\;duds +\frac{j_{0}^{\prime\prime}(0)}{2}\Upsilon_{0}TC_{1}^{2}e^{2C_{3}T}
			\nonumber\\
			&\quad + 8\Upsilon_{0}C_{1}e^{C_{3}T}\int_{0}^{t}\int_{0}^{\infty}j_{R}(u_{1}) \xi_{n}(s,u_{1})\;du_{1}ds\nonumber\\
			&\quad +\Upsilon_{1}\int_{0}^{t}\int_{1}^{\infty} \int_{1}^{\infty} \left( u_{1}j_{R}(u)+uj_{R}(u_{1})\right) \xi_{n}(s,u) \xi_{n}(s,u_{1}) \;dudu_{1} ds\nonumber\\
			&\leq C_{0}+\left(1+\|g\|_{\infty}\right)j_{0}(1)TC_{1}e^{C_{3}T}+\frac{j_{0}^{\prime\prime}(0)}{2}\Upsilon_{0}TC_{1}^{2}e^{2C_{3}T}\nonumber\\
			&\quad+\left[3\left(1+\|g\|_{\infty}\right)+8\Upsilon_{0}C_{1}e^{C_{3}T}+2\Upsilon_{1}C_{1}e^{C_{3}T}\right]\int_{0}^{t}\int_{0}^{\infty}j_{R}(u) \xi_{n}(s,u)\;duds.\nonumber
	\end{align}
Applying Gronwall’s inequality, we derive the desired estimate
		\begin{align}
			\int_{0}^{\infty} j_{R}(u)\xi_{n}(t,u)\;du  \leq L(T),\quad t \in [0, T],\;n\ge 1, \nonumber 
		\end{align}
	where 
		\begin{align}
	 L(T):=\left(C_{0}+\frac{j_{0}^{\prime\prime}(0)}{2}\Upsilon_{0}TC_{1}^{2}e^{2C_{3}T}+\left(1+\|g\|_{\infty}\right)j_{0}(1)TC_{1}e^{C_{3}T}\right)e^{L_{K}(T)}, \nonumber 
	\end{align}
and $L_{K}(T):=T\left[3\left(1+\|g\|_{\infty}\right)+8\Upsilon_{0}C_{1}e^{C_{3}T}+2\Upsilon_{1}C_{1}e^{C_{3}T}\right]$. Then~\eqref{j0} follows from Fatou's lemma by taking the limit as $R\rightarrow \infty$. Next, we proceed to establish \eqref{tailbehaviour} by considering $R_{0}\geq 1$ and deriving 
\begin{align}
	\sup_{n\geq 1} \sup_{t\in [0,T]}	\int_{R_{0}}^{\infty} u\xi_n(t,u)\;du \leq \sup_{n\geq 1} \sup_{t\in [0,T]} \sup_{u\in [R_{0},\infty)}\left(\frac{u}{j_{0}(u)}\right)	\int_{R_{0}}^{\infty} j_{0}(u)\xi_n(t,u)\;du. \nonumber
\end{align}
Utilizing \eqref{j0}, we obtain
\begin{align}	
	\sup_{n\geq 1} \sup_{t\in [0,T]}	\int_{R_{0}}^{\infty} u\xi_n(t,u)\;du \leq L(T) \sup_{u\in [R_{0},\infty)}\left(\frac{u}{j_{0}(u)}\right).  \nonumber
\end{align}	
The above result, combined with \eqref{convexinfty}, establishes \eqref{tailbehaviour}.
\end{proof}	
\begin{corollary}\label{second moment lemma-1}
		Assume that all hypotheses of \Cref{main theorem}$(a)$ are true and that $\xi^{\mathrm{in}}$ belongs to $L^1\left([0,\infty);u^2 du\right)$. Then, for every $T>0$ and $n\geq 1$, there exists $L(T)>0$ that depends only on $C_{0}$, $C_{1}$, $C_{3}$, $\|g\|_{\infty}$, $\Upsilon_{0}$, $\Upsilon_{1}$, and $T$ such that
		\begin{align}
			M_{2}(\xi_{n}(t)) \leq L(T),
		\end{align}
	for every $t \in [0, T]$ and $n\ge 1$.
	\end{corollary}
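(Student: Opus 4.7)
The plan is to test the truncated weak formulation \eqref{truncated weak formulation} against a bounded cut-off of $u \mapsto u^2$, derive a linear Gronwall inequality for the resulting truncated second moment, and then pass to the limit in the cut-off via monotone convergence.

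First, I would fix $R \geq n \geq 1$ and set $\vartheta_R(u) := \min\{u^2, R^2\}$; this belongs to $C([0,\infty)) \cap W^{1,\infty}([0,\infty))$ with $\vartheta_R(0) = 0$ and $\vartheta_R'(u) = 2u\,\chi_{[0,R]}(u)$. A short case analysis (splitting according to whether $u+u_1 \leq R$ and whether $\max\{u,u_1\} \leq R$) gives the key pointwise bound $\tilde{\vartheta}_R(u,u_1) \leq 2 u u_1\,\chi_{[0,R]^2}(u,u_1)$. Moreover, because $\beta(u|u_1)=0$ for $u > u_1$, the local mass-conservation identity \eqref{lmassconservation} gives
\begin{align*}
\int_0^{u_1} \min\{u^2, R^2\}\beta(u|u_1)\,du \leq \int_0^{u_1} u^2 \beta(u|u_1)\,du \leq u_1 \int_0^{u_1} u\beta(u|u_1)\,du = u_1^2,
\end{align*}
so $\psi(\vartheta_R)(u_1) \leq 0$ for $u_1 \leq R$; since $\alpha_n$ is supported in $[0,n] \subseteq [0,R]$, the entire fragmentation contribution in \eqref{truncated weak formulation} is non-positive. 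The death contribution is manifestly non-positive, and the boundary term vanishes because $\vartheta_R(0)=0$.

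Next, I would estimate the two remaining contributions. The growth term is straightforward: $\int_0^R 2u g_n(u)\xi_n(s,u)\,du \leq 2g_1 \int_0^\infty \vartheta_R \xi_n(s,\cdot)\,du + 2(g_0+1)\,M_1(\xi_n(s))$. For the coagulation term I would split the effective domain $[0,R]^2$ into the four pieces dictated by $u \lessgtr 1$ and $u_1 \lessgtr 1$: use \eqref{multiplicativecoagassum} on $[0,1]^2$ and the two mixed regions together with the elementary inequalities $u(1+u) \leq 2$ on $[0,1]$ and $u_1(1+u_1) \leq 2u_1^2$ on $[1,\infty)$, and use the additive bound \eqref{additivecoagassum} on $(1,R]^2$. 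Since $u^2 = \vartheta_R(u)$ for $u \leq R$, every contribution reduces to a term of the form $M_1(\xi_n)^2$ or $M_1(\xi_n)\int \vartheta_R \xi_n\,du$, and the a priori estimate $M_1(\xi_n(s)) \leq C_1 e^{C_3 T}$ from \eqref{momentbound} assembles these into
\begin{align*}
\int_0^\infty \vartheta_R(u)\xi_n(t,u)\,du \leq M_2(\xi_n^{\mathrm{in}}) + K_1(T) + K_2(T)\int_0^t \int_0^\infty \vartheta_R(u)\xi_n(s,u)\,du\,ds,
\end{align*}
valid for $t \in [0,T]$, with constants $K_1(T), K_2(T)$ independent of $n$ and $R$.

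The step I expect to be most delicate is the uniform bound $\sup_{n\geq 1} M_2(\xi_n^{\mathrm{in}}) < \infty$; this is where the additional hypothesis $\xi^{\mathrm{in}} \in L^1([0,\infty); u^2\,du)$ must be used. One secures it by strengthening the de la Vallée-Poussin function $j_0$ in the initial-data mollification \eqref{mollificationofinitialdata} so that $j_0(u) \geq u^2$ for $u$ sufficiently large, which is permissible precisely because $\int_0^\infty u^2 \xi^{\mathrm{in}}(u)\,du < \infty$. With this uniform second-moment bound on the initial data in hand, Gronwall's inequality yields $\int \vartheta_R \xi_n(t,\cdot)\,du \leq L(T)$ uniformly in $R \geq n$, and monotone convergence as $R \to \infty$ (since $\vartheta_R \uparrow u^2$) gives the asserted estimate $M_2(\xi_n(t)) \leq L(T)$ for every $t \in [0,T]$ and $n \geq 1$.
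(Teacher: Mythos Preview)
Your argument is correct and is precisely the proof of \Cref{Tail-1} specialized to $j_0(u)=u^2$: your cut-off $\vartheta_R=\min\{u^2,R^2\}$ is the $j_R$ used there, and your coagulation and fragmentation estimates reproduce the convexity inequalities \eqref{convex inequality-1}--\eqref{convex inequality-3} for this particular $j_0$. The paper's proof is accordingly a single line, observing that $j_2(u)=u^2$ is an admissible de la Vall\'ee--Poussin function (convex, $C^\infty$, concave derivative, $j_2(0)=j_2'(0)=0$, $j_2(u)/u\to\infty$) and invoking \Cref{Tail-1} together with \eqref{convexinitialdata}--\eqref{mollificationofinitialdata} read with $j_0=j_2$.
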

	\begin{proof}
		Observe that $j_{2} : u \mapsto  u^2$
		is a $C^{\infty}$ convex function on $[0,\infty)$ with concave derivative, which satisfies
		$j_{2}(0)=j_{2}^{\prime}(0) = 0$. Hence,  \Cref{second moment lemma-1}
		immediately follows from \Cref{Tail-1} and \eqref{convexinitialdata}--\eqref{mollificationofinitialdata} with  $j_{0}=j_{2}$.
	\end{proof}
The findings of the preceding section establish that matter cannot escape to infinity. The objective of the subsequent result is to demonstrate the uniform integrability of the sequence $\left(\xi_{n}\right)_{n\ge 1}$, thereby ruling out the possibility of matter concentrating at a finite size. 
	\begin{proposition} \label{uniform integrability proposition}
Assume that $\Upsilon$, $\beta$, $\alpha$, $\mu$, $g$, $a$, and $\xi^{{\mathrm{in}}}$ satisfy \eqref{multiplicativecoagassum}, \eqref{particlebound}, \eqref{lmassconservation}, \eqref{alphaassum}, \eqref{localbound}, \eqref{mu},  \eqref{growthassum}, \eqref{birthrateassum}, and \eqref{initialassum}. Then, given any $\epsilon > 0$ and $T>0$, there exists a $\delta(\epsilon) > 0$ such that
		\begin{align}
			\sup_{n\geq 1}\sup_{t\in [0,T]}\int_{\mathcal{P}}  \xi_n(t,u) \;du \leq \epsilon,\; \text{whenever}\;\;|\mathcal{P}| \leq \delta(\epsilon),\nonumber
		\end{align}
	where $\mathcal{P}$ is any Lebesgue measurable subset of $[0,\infty)$ and $|\mathcal{P}|$ denotes the standard Lebesgue measure of $\mathcal{P}$.
	\end{proposition}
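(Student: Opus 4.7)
The plan is to invoke the de la Vall\'ee-Poussin characterisation of uniform integrability. Since $\xi_n^{{\mathrm{in}}} \to \xi^{{\mathrm{in}}}$ strongly in $L^1([0,\infty))$ by \eqref{mollificationofinitialdata}, the family $\{\xi_n^{{\mathrm{in}}}\}_{n \geq 1}$ is uniformly integrable, so by \cite[Proposition I.1.1]{Le1977} there is a convex function $\Phi \in C^2([0,\infty))$ with $\Phi(0)=\Phi'(0)=0$, $\Phi'$ non-decreasing and concave, $\Phi(r)/r \to \infty$ as $r \to \infty$, and
\[
\tilde C_0 := \sup_{n\geq 1}\int_0^\infty \Phi(\xi_n^{{\mathrm{in}}}(u))\,du < \infty.
\]
The conclusion of the proposition will then follow from a uniform-in-$n$ bound on $\int_0^\infty \Phi(\xi_n(t,u))\,du$ over $[0,T]$: for any measurable $\mathcal{P}\subset[0,\infty)$ and $K>0$,
\[
\int_\mathcal{P} \xi_n(t,u)\,du \leq K|\mathcal{P}| + \sup_{r\geq K}\frac{r}{\Phi(r)}\int_0^\infty\Phi(\xi_n(t,u))\,du,
\]
so the standard two-step choice -- first $K$ large so that the second term is at most $\epsilon/2$, then $\delta(\epsilon):=\epsilon/(2K)$ -- delivers the assertion.

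Using the strong differentiability of $\xi_n$ from \Cref{ClassicalandMildsolution}, I would multiply \eqref{truncated-1} by $\Phi'(\xi_n(t,u))$ and integrate in $u$ over $(0,\infty)$, producing four contributions. The death term $-\mu_n\xi_n\Phi'(\xi_n)\leq 0$ has the favourable sign and is discarded. The growth term is rewritten via the identity $\Phi'(\xi_n)\partial_u(g_n\xi_n) = g_n'[\xi_n\Phi'(\xi_n)-\Phi(\xi_n)] + \partial_u[g_n\Phi(\xi_n)]$, so that after integrating in $u$ it leaves a boundary contribution $g_n(0)\Phi(\xi_n(t,0))$ -- controlled by the renewal condition \eqref{truncated-2} together with the uniform $L^1$ bound \eqref{momentbound} (after, if necessary, a preliminary truncation of $\Phi$ to sidestep the case $g(0)=0$) -- plus a bulk piece $-\int_0^\infty g_n'[\xi_n\Phi'(\xi_n)-\Phi(\xi_n)]\,du$ which is majorised using the key convexity inequality $r\Phi'(r)\leq 2\Phi(r)$, valid thanks to concavity of $\Phi'$ combined with $\Phi'(0)=0$, and then absorbed using \eqref{growthassum}. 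The fragmentation term is split by \eqref{particlebound} and Jensen's inequality, giving a bound $(M+1)\int_0^\infty \alpha_n(u_1)\Phi(\xi_n(u_1))\,du_1$ which is in turn absorbed into $\int_0^\infty (1+u)\Phi(\xi_n)\,du$ via \eqref{alphaassum}.

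The main obstacle is the coagulation integral. After symmetrisation in $(u,u_1)$ it reads
\[
\frac12\int_0^\infty\!\int_0^\infty \Upsilon_n(u,u_1)\xi_n(u)\xi_n(u_1)\bigl[\Phi'(\xi_n(u+u_1))-\Phi'(\xi_n(u))-\Phi'(\xi_n(u_1))\bigr]du\,du_1,
\]
and because $\Upsilon$ is only assumed to satisfy the quadratic bound \eqref{multiplicativecoagassum}, no sign can be extracted from the bracket: subadditivity of $\Phi'$ would require the argument $\xi_n(u)+\xi_n(u_1)$, not $\xi_n(u+u_1)$. Following the strategy of Stewart~\cite{Stewart1989} and Lauren\c{c}ot~\cite{Laurencot2001}, I would apply the Young-type inequality $a\Phi'(b)\leq \Phi(a) + b\Phi'(b)$ (valid since $\Phi$ is convex with $\Phi(0)=0$) to the gain portion, transferring $\Phi$ from the post-collision argument $\xi_n(u+u_1)$ onto a pre-collision factor. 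Invoking once more the bound $r\Phi'(r)\leq 2\Phi(r)$, the quadratic growth bound \eqref{multiplicativecoagassum}, and the uniform first-moment bound \eqref{momentbound}, one then majorises the coagulation contribution by a constant multiple of $\int_0^\infty (1+u)\Phi(\xi_n(t,u))\,du$. Collecting all four contributions gives
\[
\frac{d}{dt}\int_0^\infty (1+u)\Phi(\xi_n(t,u))\,du \leq \mathcal{K}(T)\left(1+\int_0^\infty (1+u)\Phi(\xi_n(t,u))\,du\right)
\]
with $\mathcal{K}(T)$ independent of $n$, and Gronwall's inequality delivers the required uniform bound, completing the proof.
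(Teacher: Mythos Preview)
Your approach is genuinely different from the paper's. The paper does not propagate a de la Vall\'ee--Poussin functional at all; instead it exploits the mild formulation~\eqref{mildformulation}: the semigroup $S_n(t)$ already absorbs growth, birth, death, and fragmentation, and satisfies $\|S_n(t)\|_Y\le e^{\tilde b t}$ uniformly in~$n$. Integrating the mild formula over a set $\mathcal P\subset[0,R)$, only the coagulation term needs estimating, and this is done directly via translation invariance of Lebesgue measure, yielding a Gronwall inequality in the modulus of integrability $\mathcal E_{n,R}(t,\delta)$.

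Your route has a real gap at the boundary. The hypotheses allow $g(0)=0$ (only $0\le g(u)\le g_1u+g_0$ is assumed), so that $g_n(0)=1/n$ and, by the renewal condition, $\xi_n(t,0^+)=n\int_0^\infty a_n\xi_n\sim nM$ with $M$ uniformly bounded by~\eqref{momentbound}. The boundary contribution $g_n(0)\Phi(\xi_n(t,0^+))=n^{-1}\Phi(nM)=M\cdot\Phi(nM)/(nM)$ therefore diverges as $n\to\infty$, precisely because $\Phi$ is superlinear. Your proposed ``preliminary truncation of $\Phi$'' cannot rescue this: if you cap $\Phi$ at height~$\Phi(K)$ the boundary term becomes $\le\Phi(K)$ and the Gronwall constant then depends on~$K$, so letting $K\to\infty$ returns~$+\infty$; if instead you modify $\Phi$ to have linear growth at infinity you lose exactly the property $\Phi(r)/r\to\infty$ that the de la Vall\'ee--Poussin conclusion needs.

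A second, smaller problem is your handling of the fragmentation gain. After the Young-type inequality $\xi_n(u_1)\Phi'(\xi_n(u))\le\Phi(\xi_n(u_1))+\Phi(\xi_n(u))$ and Fubini, one residual term is $\int_0^\infty\Phi(\xi_n(u))\big(\int_u^\infty\alpha_n(u_1)\beta(u|u_1)\,du_1\big)\,du$, and the inner integral is \emph{not} bounded uniformly in~$n$ under the stated assumptions (take $\alpha(u_1)=u_1$, $\beta(u|u_1)=2/u_1$: the inner integral equals $2(n-u)_+$). Neither~\eqref{particlebound} nor Jensen applied to the concave $\Phi'$ yields the claimed bound $(M+1)\int\alpha_n\Phi(\xi_n)$. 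The paper's semigroup route sidesteps both difficulties at once, which is why it is the more robust argument here.
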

	\begin{proof}
	We follow a similar approach as presented in \cite[Lemma 3.5]{Stewart1989}, \cite[Lemma 3.5]{Giri2021weak}, and \cite[Proposition 4.4]{leis2017existence} to prove \Cref{uniform integrability proposition}. Set $T>0$. Then, for $t\in [0,T]$, $\delta \in (0,1)$, $R>1$, and $n\geq 1$, we define the quantities
		 \begin{align}
			\mathcal{E}_{n,R}(t,\delta):=	\sup  \left\{	\int_{\mathcal{P}} \xi_{n}(t,u)\;du\;\mid\;\mathcal{P}\subset [0,R), \; |\mathcal{P}|\leq \delta \right\}  \nonumber
		\end{align}
and
	 \begin{align}
	 	\mathcal{E}_{n}^{0}(\delta):=	\sup  \left\{	\int_{\mathcal{P}} \xi_{n}^{{\mathrm{in}}}(u)\;du\;\mid\;\mathcal{P}\subset [0,\infty), \; |\mathcal{P}|\leq \delta \right\}.  \nonumber
	 \end{align}
We observe that
 \begin{align}
 		\mathcal{E}_{n,R}(0,\delta)\leq \mathcal{E}_{n}^{0}(\delta).
 \end{align}
 Moreover, from \eqref{mollificationofinitialdata} and the Dunford-Pettis theorem \cite[Theorem 4.21.2]{edwards2012functional}, we obtain the uniform convergence property
\begin{align}\label{uniforminitial1}
	\lim_{\delta\rightarrow 0}\sup_{n \ge 1}\mathcal{E}_{n}^{0}(\delta)=0.
\end{align}
Next, we consider a measurable subset $\mathcal{P}\subset [0,R)$ having $|\mathcal{P}|\leq \delta$. From \eqref{mildformulation}, \eqref{Snormbound}, \eqref{tildebnbound}, and Fubini's theorem, we deduce the following inequality 
		\begin{align}
		\int_{\mathcal{P}}\xi_{n}(t,u) \;du &=\int_{\mathcal{P}} S_{n}(t)\xi_{n}^{{\mathrm{in}}}(u) \;du+\int_{0}^{t}	\int_{\mathcal{P}} S_{n}(t-s)\mathcal{C}_{n}(\xi_{n})(s,u)\;duds\nonumber\\
		&\leq \int_{\mathcal{P}} 	\|S_{n}(t)\|_{Y}\xi_{n}^{{\mathrm{in}}}(u) \;du+\int_{0}^{t}	\int_{\mathcal{P}} 	\|S_{n}(t-s)\|_{Y}\left|\mathcal{C}_{n}(\xi_{n})(s,u)\right|\;duds\nonumber\\
		&\leq e^{\tilde{b}_{n}t}\int_{\mathcal{P}} 	\xi_{n}^{{\mathrm{in}}}(u) \;du+\int_{0}^{t}e^{\tilde{b}_{n}(t-s)}	\int_{\mathcal{P}} \left|\mathcal{C}_{n}(\xi_{n})(s,u)\right|\;duds\nonumber\\
		&\leq e^{\tilde{b}t}\int_{\mathcal{P}} \xi_{n}^{{\mathrm{in}}}(u) \;du\nonumber\\
		&\quad +\frac{e^{\tilde{b}t}}{2}\int_{0}^{t}	\int_{0}^{\infty}\int_{0}^{\infty}\left[\chi_{\mathcal{P}}(u+u_{1})+\chi_{\mathcal{P}}(u)+\chi_{\mathcal{P}}(u_{1})\right]\Upsilon_{n}(u,u_{1})\xi_{n}(s,u)\xi_{n}(s,u_{1})\;du_{1}du ds,\label{uniformcoagterm1}			
		\end{align}
	for each $t\in [0,T]$. Let us compute the last term on the right-hand side of \eqref{uniformcoagterm1}. By applying \eqref{multiplicativecoagassum}, we obtain
		\begin{align}
	 &\int_{0}^{\infty}\int_{0}^{\infty}\left[\chi_{\mathcal{P}}(u+u_{1})+\chi_{\mathcal{P}}(u)+\chi_{\mathcal{P}}(u_{1})\right]\Upsilon_{n}(u,u_{1})\xi_{n}(s,u)\xi_{n}(s,u_{1})\;du_{1}du\nonumber\\
	 &\leq \Upsilon_{0}\int_{0}^{\infty}\int_{0}^{\infty}\left[\chi_{-u+\mathcal{P}}(u_{1})+\chi_{\mathcal{P}}(u)+\chi_{\mathcal{P}}(u_{1})\right](1+u)(1+u_{1})\xi_{n}(s,u)\xi_{n}(s,u_{1})\;du_{1}du.\nonumber
	\end{align}
Using the inclusion $-u + \mathcal{P} \cap [0, \infty) \subset [0, R)$ for $u\in (0,\infty)$, the translation invariance of the Lebesgue measure, and \eqref{momentbound}, we obtain the estimate
  \begin{align}
   &\int_{0}^{\infty}\int_{0}^{\infty}\left[\chi_{\mathcal{P}}(u+u_{1})+\chi_{\mathcal{P}}(u)+\chi_{\mathcal{P}}(u_{1})\right]\Upsilon_{n}(u,u_{1})\xi_{n}(s,u)\xi_{n}(s,u_{1})\;du_{1}du\nonumber\\
  &\leq  3\Upsilon_{0}(1+R) C_{1}e^{C_{3}s}\mathcal{E}_{n,R}(s,\delta),\quad s\in (0,t),\label{uniformcoagterm2}	
 \end{align}
for each $t\in [0,T]$. Substituting \eqref{uniformcoagterm2} into \eqref{uniformcoagterm1}, we conclude that
\begin{align}
\mathcal{E}_{n,R}(t,\delta)\leq e^{\tilde{b}T}\mathcal{E}_{n}^{0}(\delta)+\frac{3}{2}\Upsilon_{0}(1+R)C_{1}e^{\left(C_{3}+\tilde{b}\right)T}\int_{0}^{t}\mathcal{E}_{n,R}(s,\delta)\;ds,\nonumber
\end{align}
for every $t\in [0,T]$. Then, applying Gronwall's inequality, we obtain
		\begin{align}
			\mathcal{E}_{n,R}(t,\delta) &\leq e^{\tilde{b}T}e^{\frac{3}{2}T\Upsilon_{0}(1+R) C_{1}e^{\left(C_{3}+\tilde{b}\right)T}} \mathcal{E}_{n}^{0}(\delta). \label{uniformcoagterm3}  
		\end{align}
Next, for $\delta \in (0,1)$ and $T>0$, we introduce the following notation
	\begin{align}
		\mathcal{E}(\delta):=	\sup  \left\{	\int_{\mathcal{P}} \xi_{n}(t,u)\;du\; |\;\mathcal{P}\subset [0,\infty), \; |\mathcal{P}|\leq \delta, \;t\in [0,T], \;n\geq 1 \right\},  \nonumber
	\end{align}
where $\mathcal{P}$ is a measurable subset of $[0,\infty)$ with $|\mathcal{P}|\leq \delta$. For $n\geq 1$, $T>0$, $t\in [0,T]$, $\delta \in (0,1)$, and $R>1$, from \eqref{momentbound} and \eqref{uniformcoagterm3}, we infer that
\begin{align}
	\int_{\mathcal{P}}\xi_{n}(t,u)\;du&\leq 	\int_{\mathcal{P}\cap [0,R)}\xi_{n}(t,u)\;du+\int_{R}^{\infty}\xi_{n}(t,u)\;du\nonumber\\
	&\leq \mathcal{E}_{n,R}(t,\delta)+\frac{1}{R} \int_{R}^{\infty}u\xi_{n}(t,u)\;du\nonumber\\
	&\leq e^{\tilde{b}T}e^{\frac{3}{2}T\Upsilon_{0}(1+R) C_{1}e^{\left(C_{3}+\tilde{b}\right)T}} \mathcal{E}_{n}^{0}(\delta)+\frac{1}{R}C_{1}e^{C_{3}T}.\nonumber 
\end{align}
By taking the supremum over $n$, $t$, and $\mathcal{P}$ on both sides, we derive the following inequality
\begin{align}
\mathcal{E}(\delta)
	\leq e^{\tilde{b}T}e^{\frac{3}{2}T\Upsilon_{0}(1+R) C_{1}e^{\left(C_{3}+\tilde{b}\right)T}} \sup_{n \ge 1}\mathcal{E}_{n}^{0}(\delta)+\frac{1}{R}C_{1}e^{C_{3}T}.\nonumber 
\end{align}
Utilizing \eqref{uniforminitial1}, we take the limit as $\delta\rightarrow 0$ in the previous inequality, yielding
\begin{align}
		\lim_{\delta\rightarrow 0} \mathcal{E}(\delta)\leq \frac{1}{R}C_{1}e^{C_{3}T}.\nonumber
\end{align}
Since the above inequality holds for any $R>1$ and $\mathcal{E}(\delta)$ is non-negative, we can take the limit as $R\rightarrow\infty$ to conclude that
\begin{align}
	\lim_{\delta\rightarrow 0}\mathcal{E}(\delta) = 0,\nonumber
\end{align}
thus completing the proof of \Cref{uniform integrability proposition}.
\end{proof}
For each $t\in [0,T]$, \Cref{uniform integrability proposition} ensures that the sequence $(\xi_n(t))_{n\geq 1}$ is uniformly integrable in $L^{1}([0, \infty))$, while \eqref{momentbound} establishes that it remains uniformly bounded in $Y_{+}$. Consequently, by applying the Dunford-Pettis theorem \cite[Theorem 4.21.2]{edwards2012functional}, we conclude that  
\begin{align}  
	\begin{cases}  
		&(\xi_n(t))_{n\geq 1} \text{ is weakly compact in } L^1([0,\infty)), \\  
		&\text{for each } t \in [0,T].  
	\end{cases} \label{weakly compact}  
\end{align}  

\subsection {Equicontinuity in Time}
The analysis in the preceding sections resolves the weak compactness issue concerning the size variable $u$. To establish the relative compactness of the sequence $(\xi_n)_{n\geq 1}$ in $C([0,T];w\text{-}L^1([0,\infty)))$ for any $T>0$, we employ a generalized form of the Arzelà--Ascoli theorem \cite[Theorem 1.3.2]{Vrabie2003}. This requires demonstrating that the sequence $(\xi_n)_{n\geq 1}$ satisfies the following criterion.
	\begin{lemma} \label{time equicontinuity}
	Let the assumptions of \Cref{uniform integrability proposition} be satisfied. Then, for any $\vartheta \in L^{\infty}([0,\infty))$, $T>0$ and $\delta\in (0,T)$, the following holds
		\begin{align}
			\lim_{\delta\rightarrow 0} \sup_{t\in [0,T-\delta]}	\sup_{n\geq 1}	\left|\int_{0}^{\infty}  \left[\xi_{n}(t+\delta,u)-\xi_{n}(t,u)\right]\vartheta(u)\;du \right| =0.\label{equicontinuity-0}
		\end{align}
	\end{lemma}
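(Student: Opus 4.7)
The plan is to prove \Cref{time equicontinuity} in two stages: first establish the estimate for test functions in $C_c^1([0,\infty))$ using the truncated weak formulation \eqref{truncated weak formulation}, then pass to arbitrary $\vartheta \in L^\infty([0,\infty))$ by an approximation argument whose uniformity in $n$ is provided by the tail bound \eqref{tailbehaviour} of \Cref{Tail-1} and the uniform integrability of \Cref{uniform integrability proposition}.

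For $\vartheta \in C_c^1([0,\infty))$, subtracting \eqref{truncated weak formulation} written at times $t+\delta$ and $t$ expresses $\int_0^\infty [\xi_n(t+\delta,u)-\xi_n(t,u)]\vartheta(u)\,du$ as the sum of five time-integrals over $[t,t+\delta]$ corresponding to the transport, renewal, death, fragmentation, and coagulation contributions. The bounds \eqref{growthassum}, \eqref{birthrateassum}, \eqref{mu}, \eqref{alphaassum}, and \eqref{multiplicativecoagassum}, combined with the elementary estimates $|\psi(\vartheta)(u_1)| \leq (M+1)\|\vartheta\|_\infty$ and $|\tilde\vartheta(u,u_1)| \leq 3\|\vartheta\|_\infty$, allow me to dominate each integrand by a constant multiple of $(1+u)\xi_n(s,u)$ or $(1+u)(1+u_1)\xi_n(s,u)\xi_n(s,u_1)$. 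Inserting the uniform first-moment bound \eqref{momentbound} then yields
$$\sup_{n\geq 1}\sup_{t\in [0,T-\delta]}\left|\int_0^\infty [\xi_n(t+\delta,u)-\xi_n(t,u)]\vartheta(u)\,du\right| \leq K(T,\vartheta)\,\delta,$$
where $K(T,\vartheta)$ depends only on $T$, $\|\vartheta\|_{W^{1,\infty}}$, and the constants in the hypotheses.

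For the extension, given $\vartheta \in L^\infty([0,\infty))$ with $\|\vartheta\|_\infty > 0$ and $\epsilon > 0$, I would first use \eqref{momentbound} (or \eqref{tailbehaviour}) to pick $R \geq 1$ large enough that $\|\vartheta\|_\infty \sup_{n,t\in[0,T]}\int_R^\infty \xi_n(t,u)\,du < \epsilon/8$, and then invoke \Cref{uniform integrability proposition} to select $\eta > 0$ with $\sup_{n,t\in[0,T]}\int_E \xi_n(t,u)\,du < \epsilon/(16\|\vartheta\|_\infty)$ whenever $|E|<\eta$. A combination of Lusin's theorem on $[0,R]$, Tietze extension, and mollification then produces $\tilde\vartheta \in C_c^1([0,\infty))$, supported in $[0,R+1]$ and with $\|\tilde\vartheta\|_\infty \leq \|\vartheta\|_\infty + 1$, such that $\vartheta = \tilde\vartheta$ on $[0,R]$ except on a measurable set of Lebesgue measure at most $\eta$. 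Splitting $\int|\vartheta-\tilde\vartheta|\xi_n\,du$ into the contributions from this exceptional set, from $[0,R]$, and from $(R,\infty)$ and applying the chosen bounds gives $\sup_{n,t}\int_0^\infty |\vartheta-\tilde\vartheta|\xi_n(t,u)\,du \leq \epsilon/4$. Consequently
$$\left|\int_0^\infty [\xi_n(t+\delta,u)-\xi_n(t,u)]\vartheta(u)\,du\right| \leq \frac{\epsilon}{2} + K(T,\tilde\vartheta)\,\delta,$$
which is at most $\epsilon$ for $\delta$ sufficiently small, establishing \eqref{equicontinuity-0}.

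The main obstacle will be the reduction from $L^\infty$ to $C_c^1$ test functions: the weak formulation \eqref{truncated weak formulation} intrinsically requires $\vartheta'$ to exist in $L^\infty$ (through the transport term), whereas $L^\infty$ is not separable and $C_c^1$ is not norm-dense in it. The uniform integrability of \Cref{uniform integrability proposition} and the tail estimate \eqref{tailbehaviour} are precisely what permit the Lusin--mollification approximation to be carried out with errors that are controlled uniformly in $n$ and $t\in[0,T]$.
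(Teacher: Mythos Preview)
Your proposal is correct and follows essentially the same approach as the paper: establish the estimate for $\vartheta \in C_c^1([0,\infty))$ via the truncated weak formulation and the moment bound \eqref{momentbound}, then extend to $\vartheta \in L^\infty([0,\infty))$ by a density-type argument. The paper simply cites \cite[Proposition~4.7]{laurenccot2001weak}, \cite[Lemma~4.5]{Laurencot2001}, and \cite[Section~3.4]{barik2022mass} for this second step, whereas you spell out the Lusin--Tietze--mollification construction explicitly; your version is a faithful unpacking of what those references contain, and your observation that \eqref{momentbound} alone (rather than \eqref{tailbehaviour}) suffices for the tail cutoff keeps the argument within the hypotheses of \Cref{uniform integrability proposition}.
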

	\begin{proof}
Consider $\vartheta \in  C_{c}^{1}([
		0,\infty))$. Fix $T>0$ and let $n\geq 1$, $\delta\in (0,T)$, and $t\in [0,T-\delta]$. Then, from \eqref{truncated weak formulation}, we have the following inequality 
		\begin{align}\label{EQN-1}
			&\left|\int_{0}^{\infty}(\xi_{n}(t+\delta,u)-\xi_n(t,u))\vartheta(u)\;du\right|\nonumber\\	\leq &{\|\vartheta^{\prime}\|_{L^{\infty}([0,\infty))}} \int_{t}^{t+\delta} \int_{0}^{\infty}|g_{n}(u)|\xi_{n}(s,u) \;duds+ \|\vartheta\|_{L^{\infty}([0,\infty))}\int_{t}^{t+\delta}\int_{0}^{\infty}a_{n}(u)\xi_{n}(s,u)\;duds\nonumber\\&\quad +\|\vartheta\|_{L^{\infty}([0,\infty))}\int_{t}^{t+\delta}\int_{0}^{\infty} \mu_{n}(u)\xi_{n}(s,u)\;duds\nonumber  \\ &\quad +\|\vartheta\|_{L^{\infty}([0,\infty))}\int_{t}^{t+\delta}\int_{0}^{\infty}\left(\int_{0}^{u_{1}}\beta_{n}(u|u_{1})\;du+1\right) \alpha_{n}(u_{1})\xi_{n}(s,u_{1})\;du_{1}ds \nonumber \\
			+	&{\frac{3}{2}\|\vartheta\|_{L^{\infty}([0,\infty))} \int_{t}^{t+\delta}\int_{0}^{\infty}\int_{0}^{\infty}\Upsilon_{n}(u,u_{1})\xi_{n}(s,u)\xi_{n}(s,u_{1}) \;dudu_{1}ds}.\nonumber
		\end{align}
	In the above calculations, we have used the fact that $|\vartheta(0)|\leq \sup_{u\in [0,\infty)}|\vartheta(u)|=\|\vartheta\|_{L^{\infty}([0,\infty))}$ for $\vartheta \in  C_{c}^{1}([
	0,\infty))$. Since $g$, $a$, $\mu$, and $\alpha$ belong to $Y_{\infty}$, from \eqref{trungmubetaF}, \eqref{truncoag}, \eqref{gnbound}, \eqref{particlebound}, \eqref{multiplicativecoagassum}, and \eqref{momentbound}, we obtain
		\begin{align}
			\left|\int_{0}^{\infty}(\xi_{n}(t+\delta,u)-\xi_n(t,u))\vartheta(u)\;du\right| \leq& C_{5}(T)\|\vartheta\|_{W^{1,\infty}([0,\infty))}\delta,
		\end{align}
	where 
	\begin{align*}
		C_{5}(T):=\left(1+\|g\|_{\infty}+\|a\|_{\infty}+\|\mu\|_{\infty}+(M+1)\|\alpha\|_{\infty}+\frac{3\Upsilon_{0}}{2}C_{1}e^{C_{3}T}\right)C_{1}e^{C_{3}T}.
	\end{align*}
Thus, \eqref{equicontinuity-0} holds for all $\vartheta \in C_c^1([0,\infty))$. By a density type argument, we extend its validity to all $\vartheta \in L^{\infty}([0,\infty))$; see~\cite[Proposition 4.7]{laurenccot2001weak}, \cite[Lemma 4.5]{Laurencot2001}, and \cite[Section 3.4]{barik2022mass} for reference. This completes the proof of \Cref{time equicontinuity}.  
\end{proof}  
Following~\Cref{time equicontinuity} and the definition in \cite[Definition 1.3.1]{Vrabie2003}, we deduce that  
\begin{align}\label{weakly equicontinuous}  
(\xi_n)_{n\geq 1} \text{ is weakly equicontinuous in } L^1([0,\infty)) \text{ at each } t \in [0,T].  
\end{align}  
Applying \eqref{weakly compact} and \eqref{weakly equicontinuous}, we invoke the Arzelà--Ascoli theorem \cite[Theorem 1.3.2]{Vrabie2003} to conclude that $(\xi_n)_{n\ge 1}$ is relatively compact in $C([0,T];w\text{-}L^1([0,\infty)))$ for every $T>0$. Then, we employ a diagonal argument to extract a sub-sequence $(\xi_n)_{n\geq 1}$ (not relabeled) and a function $\xi \in C([0,\infty); w\text{-}L^1([0,\infty)))$ satisfying
\begin{align}\label{convergence-3}  
\lim_{n\rightarrow \infty} \sup_{t\in [0,T]}\left| \int_{0}^{\infty}(\xi_n(t,u)-\xi(t,u))\vartheta(u)\;du\right|=0,
\end{align}  
for all $\vartheta \in L^{\infty}([0,\infty))$ and $T \in (0,\infty)$. Furthermore, using \eqref{momentbound}, Fatou’s lemma, and the non-negativity of $\left(\xi_n\right)_{n \ge 1}$ on $[0,T]$, we conclude that  
\begin{align}\label{limmomentbound}  
\|\xi(t)\| \leq C_{1}e^{C_{3}t},\quad  \xi(t) \in Y_{+} \quad \text{ for each } t \in [0,T],  
\end{align}  
and $\xi$ belongs to $L^{\infty}(0,T; Y_{+})$. Additionally, the convergence in \eqref{convergence-3} is further improved in the following proposition.  
	\begin{proposition}\label{improved convergence}
		Assume that the parameters $\Upsilon$, $\beta$, $\alpha$, $\mu$, $g$, $a$, and $\xi^{{\mathrm{in}}}$ satisfy conditions \eqref{multiplicativecoagassum}, \eqref{particlebound}, \eqref{lmassconservation}, \eqref{alphaassum}, \eqref{localbound},
		\eqref{mu}, \eqref{growthassum}, \eqref{birthrateassum}, and \eqref{initialassum}. For any $T>0$, the following statements are true
		\begin{enumerate}[label=(\alph*)]
		\item  $\xi_{n}\rightarrow \xi$ in  $C\left([0,T];w-L^1([0, \infty); u du)\right)$ if $\Upsilon$ satisfies \eqref{additivecoagassum}.
		\item $\xi_{n}\rightarrow \xi$ in  $C\left([0,T];w-L^1([0, \infty); r(u) du)\right)$ if $\Upsilon$ satisfies \eqref{subquadraticcoagassum}.
	\end{enumerate}
	\end{proposition}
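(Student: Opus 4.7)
The plan is to upgrade the uniform unweighted weak convergence \eqref{convergence-3} to the weighted spaces in (a) and (b) via a standard cut-off argument, combining it with the uniform tail and moment estimates already established in \Cref{Tail-1} and \eqref{limmomentbound}.

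Fix $T>0$, $\phi \in L^\infty([0,\infty))$, and $R>0$. For part (a), I would split
\begin{align*}
\int_{0}^{\infty} u \phi(u) (\xi_n(t,u) - \xi(t,u))\, du = \mathcal{I}_{n,R}(t) + \mathcal{J}_{n,R}(t),
\end{align*}
where $\mathcal{I}_{n,R}(t)$ is the integral on $[0,R]$ and $\mathcal{J}_{n,R}(t)$ the integral on $(R, \infty)$. Since the test function $u\phi(u) \chi_{[0,R]}(u)$ belongs to $L^\infty([0,\infty))$, the uniform convergence in \eqref{convergence-3} gives $\lim_{n \to \infty} \sup_{t \in [0,T]} |\mathcal{I}_{n,R}(t)| = 0$ for each fixed $R$. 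For the tail $\mathcal{J}_{n,R}(t)$, I would first observe that Fatou's lemma applied to \eqref{j0} combined with \eqref{convergence-3} yields $\int_0^\infty j_0(u) \xi(t,u)\, du \le L(T)$ uniformly in $t \in [0,T]$. Reproducing the last step of the proof of \Cref{Tail-1} then extends \eqref{tailbehaviour} to $\xi$, so that
\begin{align*}
\sup_{n \ge 1}\sup_{t \in [0,T]}|\mathcal{J}_{n,R}(t)| \le 2\|\phi\|_{L^\infty} L(T) \sup_{u \ge R} \frac{u}{j_0(u)},
\end{align*}
which vanishes as $R \to \infty$ by \eqref{convexinfty}. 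Sending $n \to \infty$ first and then $R \to \infty$ proves (a).

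Part (b) is handled by the same scheme with $u$ replaced by $r(u)$. On $[0,R]$, the first part of \eqref{sublimboundcoagassum} guarantees that $r\phi \chi_{[0,R]}$ lies in $L^\infty([0,\infty))$, so the head contribution again tends to zero uniformly in $t$ by \eqref{convergence-3}. For the tail, the second limit in \eqref{sublimboundcoagassum} furnishes the domination
\begin{align*}
\int_R^\infty r(u) |\phi(u)| (\xi_n(t,u) + \xi(t,u))\, du \le \|\phi\|_{L^\infty} \left(\sup_{u \ge R} \frac{r(u)}{u}\right) \int_R^\infty u (\xi_n(t,u) + \xi(t,u))\, du,
\end{align*}
and the right-hand side is small uniformly in $n \ge 1$ and $t \in [0,T]$ by the first-moment bounds \eqref{momentbound} and \eqref{limmomentbound} together with $\lim_{u \to \infty} r(u)/u = 0$.

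The only delicate point is confirming that the weak limit $\xi$ inherits the same superlinear-moment and first-moment tail estimates as the approximants $\xi_n$. This is a routine consequence of the lower semicontinuity of the $L^1$ weak convergence \eqref{convergence-3}, but has to be invoked explicitly before the cut-off argument can be closed; no other essentially new estimate is needed beyond those already produced in \Cref{Tail-1} and \Cref{uniform integrability proposition}.
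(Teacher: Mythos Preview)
Your proposal is correct and follows essentially the same route as the paper: the same head/tail splitting, the same use of Fatou's lemma with \eqref{j0} to transfer the $j_0$-moment bound to $\xi$, and the same tail control via $\sup_{u\ge R} u/j_0(u)$ in (a) and $\sup_{u\ge R} r(u)/u$ with \eqref{momentbound}--\eqref{limmomentbound} in (b). Your remark about the ``delicate point'' is exactly what the paper records as \eqref{j1}.
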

\begin{proof}[\textbf{Proof of \Cref{improved convergence}(a)}]  
	Let $T>0$, $t \in [0, T]$, and $R > 1$. From \eqref{j0}, \eqref{convergence-3}, and Fatou's lemma, we obtain  
	\begin{align}  
		\int_{0}^{\infty} j_{0}(u)\xi(t,u) \,du \leq L(T), \quad \text{for } t \in [0,T]. \label{j1}  
	\end{align}  
	Using \eqref{j0} and \eqref{j1}, we deduce that  
	\begin{align}  
		&\left| \int_{0}^{\infty} u(\xi_n(t,u)-\xi(t,u))\vartheta(u)\,du \right| \nonumber\\  
		&\leq \left| \int_{0}^{R} u(\xi_n(t,u)-\xi(t,u))\vartheta(u)\,du \right|  
		+ \left| \int_{R}^{\infty} u (\xi_n(t,u)-\xi(t,u))\vartheta(u)\,du \right| \nonumber\\  
		&\leq \left| \int_{0}^{R} u(\xi_n(t,u)-\xi(t,u))\vartheta(u)\,du \right| \nonumber\\  
		&\quad + \|\vartheta\|_{L^{\infty}(0,\infty)}\sup_{u\geq R} \left\{\frac{u}{j_{0}(u)}\right\} \int_{R}^{\infty} j_{0}(u)(\xi_n(t,u)+\xi(t,u))\;du \nonumber\\  
		&\leq \left| \int_{0}^{R} u(\xi_n(t,u)-\xi(t,u))\vartheta(u)\,du \right|  
		+ 2\|\vartheta\|_{L^{\infty}([0,\infty))} L(T) \sup_{u\geq R} \left\{\frac{u}{j_{0}(u)}\right\},\label{massimproveconvo-1}  
	\end{align}  
	for every $\vartheta \in L^{\infty}([0,\infty))$ and $t\in [0,T]$. First, we take the supremum over $[0,T]$ on the both sides of \eqref{massimproveconvo-1}. Then, we pass to the limit as $n \to \infty$ using \eqref{convergence-3}, and let $R \to \infty$ with the help of \eqref{convexinfty}, arriving at  
	\begin{align}  
		\lim_{n\rightarrow \infty} \sup_{t\in [0,T]} \left| \int_{0}^{\infty} u(\xi_n(t,u)-\xi(t,u))\vartheta(u) \,du \right| = 0, \nonumber  
	\end{align}  
	as required.  
\end{proof}  
\begin{proof}[\textbf{Proof of \Cref{improved convergence}(b)}] For $T>0$, $t \in [0, T ]$, and $R > 1$, we have
\begin{align}
	&\left| \int_{0}^{\infty}r(u)(\xi_n(t,u)-\xi(t,u))\vartheta(u)\;du\right|\nonumber\\
	&\leq \left| \int_{0}^{R}r(u)(\xi_n(t,u)-\xi(t,u))\vartheta(u)\;du\right|+\left| \int_{R}^{\infty}r(u)(\xi_n(t,u)-\xi(t,u))\vartheta(u)\;du\right|\nonumber\\
	&\leq \left| \int_{0}^{R}r(u)(\xi_n(t,u)-\xi(t,u))\vartheta(u)\;du\right|+\|\vartheta\|_{L^{\infty}(0,\infty)}\sup_{u\geq R}\left\{\frac{r(u)}{u}\right\} \int_{R}^{\infty}u(\xi_n(t,u)+\xi(t,u))\;du,\nonumber
\end{align}
	for every $\vartheta \in L^{\infty}([0,\infty))$. Then, by \eqref{momentbound} and \eqref{limmomentbound}, we obtain
\begin{align}
	\left| \int_{0}^{\infty}r(u)(\xi_n(t,u)-\xi(t,u))\vartheta(u)\;du\right|
	&\leq \left| \int_{0}^{R}r(u)(\xi_n(t,u)-\xi(t,u))\vartheta(u)\;du\right| \nonumber\\
	&\quad +2C_{1}e^{C_{3}T}\|\vartheta\|_{L^{\infty}([0,\infty))}\sup_{u\geq R}\left\{\frac{r(u)}{u}\right\}. \nonumber
\end{align} 
Since $u\mapsto r(u)\chi_{[0,R]}$ belongs to $L^{\infty}([0,\infty))$ by \eqref{sublimboundcoagassum}, we deduce from \eqref{convergence-3} that
\begin{align}
	&\lim_{n\rightarrow \infty} \sup_{t\in [0,T]}\left| \int_{0}^{\infty}r(u)(\xi_n(t,u)-\xi(t,u))\vartheta(u)\right|\nonumber\\
	&\leq 2C_{1}e^{C_{3}T}\|\vartheta\|_{L^{\infty}(0,\infty)}\sup_{u\geq R}\left\{\frac{r(u)}{u}\right\}. \nonumber
\end{align}
Next, taking the limit as $R \to \infty$ in the above inequality and using \eqref{sublimboundcoagassum}, we obtain  
\begin{align}  
	\lim_{n\rightarrow \infty} \sup_{t\in [0,T]}\left| \int_{0}^{\infty} r(u)(\xi_n(t,u)-\xi(t,u))\vartheta(u) \,du \right|=0. \nonumber  
\end{align}  
This confirms that $\xi_n \to \xi$ in $C\left([0,T]; w\text{-}L^1([0, \infty); r(u)du)\right)$.  
\end{proof}
	\subsection{Proof of \Cref{main theorem} }
		At this stage, we are in a position to provide the proof of \Cref{main theorem}. To finalize the proof, it suffices to demonstrate that $\xi$ satisfies~\eqref{weak formulation}. Since the sequence $(\xi_{n})_{n\ge 1}$ satisfies the truncated weak formulation \eqref{truncated weak formulation}, we only have to pass to the limit in each of the corresponding terms. Clearly, \eqref{mollificationofinitialdata} implies that
	\begin{align}\label{initialdataconvergence}
		\lim_{n\rightarrow \infty}\int_{0}^{\infty} \xi_{n}^{{\mathrm{in}}}(u)\vartheta(u)\;du =\int_{0}^{\infty} \xi^{{\mathrm{in}}}(u)\vartheta(u)\;du,
	\end{align}
for any $\vartheta \in C^{1}_c\left([0,\infty)\right)$.
\begin{proof}[\textbf{Proof of \Cref{main theorem}(a)}]
	First, we consider $\vartheta \in C^{1}_c\left([0,\infty)\right)$, $T>0$, and $t\in [0,T]$. From \eqref{growthassum}, \eqref{gnbound}, and \eqref{trungmubetaF}, we have 
	\begin{align}\label{growthconvergencebound}
		\frac{|\vartheta^{\prime}(u)|g_{n}(u)}{1+u}\leq \|\vartheta^{\prime}\|_{L^{\infty}(0,\infty)}\left(\|g\|_{\infty}+1\right),\quad u\in (0,\infty), \;n\ge 1,
	\end{align}
and 
\begin{align}\label{growthpointwiseconvergence}
	\lim_{n\rightarrow \infty} \frac{g_{n}(u)}{1+u}= \frac{g(u)}{1+u},\quad u\in (0, \infty).
\end{align}
By \eqref{convergence-3} and \Cref{improved convergence}(a), we obtain that
\begin{align}\label{weightedweakconvergence}
		\xi_{n}\rightarrow \xi \quad {\mathrm{in}}\quad  C\left([0,T],w-Y\right)\quad \text{for all}\quad T>0.
\end{align}
Based on \eqref{growthconvergencebound}, \eqref{growthpointwiseconvergence}, \eqref{weightedweakconvergence}, and \eqref{momentbound}, it follows from \cite[Proposition 2.61]{fonseca2007modern} and Lebesgue's DCT that
\begin{align*}
	\lim_{n \to \infty} \int_{0}^{t} \int_{0}^{\infty} \frac{\vartheta^{\prime}(u) g_{n}(u)}{1+u} (1+u) \xi_{n}(s,u)\;duds = \int_{0}^{t} \int_{0}^{\infty} \frac{\vartheta^{\prime}(u) g(u)}{1+u} (1+u) \xi(s,u)\;duds.
\end{align*}
In other words,
\begin{align}\label{gconvergence}
	\lim_{n \to \infty} \int_{0}^{t} \int_{0}^{\infty} \vartheta^{\prime}(u) g_{n}(u) \xi_{n}(s,u)\;duds = \int_{0}^{t} \int_{0}^{\infty} \vartheta^{\prime}(u) g(u) \xi(s,u)\;duds.
\end{align}
Similarly, from \eqref{trungmubetaF}, \eqref{birthrateassum}, \eqref{mu}, \eqref{particlebound}, \eqref{alphaassum}, \eqref{weightedweakconvergence}, \eqref{momentbound}, \cite[Proposition 2.61]{fonseca2007modern}, and Lebesgue's DCT, we deduce the following convergence results
\begin{align}\label{betaconvergence}
\lim_{n\rightarrow \infty}\vartheta (0)\int_{0}^{t}\int_{0}^{\infty} a_{n}(u)\xi_{n}(s,u)\;duds= \vartheta (0)\int_{0}^{t}\int_{0}^{\infty} a(u)\xi(s,u)\;duds,	
\end{align}
\begin{align}\label{muconvergence}
	\lim_{n\rightarrow \infty}\int_{0}^{t}\int_{0}^{\infty} \vartheta (u)\mu_{n}(u)\xi_{n}(s,u)\;duds= \int_{0}^{t}\int_{0}^{\infty} \vartheta(u)\mu(u)\xi(s,u)\;duds,	
\end{align}
and 
\begin{align}\label{Fconvergence}
\lim_{n\rightarrow \infty}\int_{0}^{t}\int_{0}^{\infty}\psi(\vartheta)(u_{1}) \alpha_{n}(u_{1})\xi_{n}(s,u_{1})\;du_{1}ds=\int_{0}^{t}\int_{0}^{\infty}\psi(\vartheta)(u_{1}) \alpha(u_{1})\xi(s,u_{1})\;du_{1}ds.
\end{align}
Next, by applying \eqref{momentbound}, \eqref{convergence-3}, and \Cref{improved convergence}(a), we establish that  
\begin{align}\label{weakconvergence}  
	H_n \to H \quad {\mathrm{in}} \quad C\left([0,T], Y_{0,w} \times Y_{0,w} \right),  
\end{align}  
 where  
\begin{align}  
	H_n(t,u,u_{1}) := (1+u)(1+u_{1})\xi_n(t,u)\xi_n(t,u_{1}), \quad  
	H(t,u,u_{1}) := (1+u)(1+u_{1})\xi(t,u)\xi(t,u_{1}),\nonumber  
\end{align}  
for $(t,u,u_{1}) \in [0,T] \times [0,\infty)^2$, and $T>0$. The space $Y_{0,w}$ is given by $Y_{0,w} := w\text{-}L^1([0,\infty)) = L^1([0,\infty))$ endowed with the weak topology. Furthermore, utilizing \eqref{multiplicativecoagassum}, we observe that  
\begin{align}\label{convergencebound}  
	\frac{\left| \tilde{\vartheta}(u,u_{1}) \right| \Upsilon_n(u,u_{1})}{(1+u)(1+u_{1})} \leq 3\Upsilon_{0} \|\vartheta\|_{L^{\infty}([0,\infty))}, \quad (u,u_{1}) \in (0,\infty)^2, \quad n \geq 1.  
\end{align}  
Additionally, from \eqref{truncoag}, we deduce the pointwise limit  
\begin{align}\label{pointwiseconvergence}  
	\lim_{n \to \infty} \frac{\tilde{\vartheta}(u,u_{1}) \Upsilon_n(u,u_{1})}{(1+u)(1+u_{1})} = \frac{\tilde{\vartheta}(u,u_{1}) \Upsilon(u,u_{1})}{(1+u)(1+u_{1})}, \quad (u,u_{1}) \in (0,\infty)^2.  
\end{align}  
By using \eqref{weakconvergence}, \eqref{convergencebound}, \eqref{pointwiseconvergence}, \eqref{multiplicativecoagassum}, and \eqref{momentbound}, we apply \cite[Proposition 2.61]{fonseca2007modern} and Lebesgue's DCT to conclude that  
\begin{align}  
	\lim_{n\rightarrow \infty} \int_{0}^{t} \int_{0}^{\infty} \int_{0}^{\infty} \frac{\tilde{\vartheta}(u,u_{1}) \Upsilon_n(u,u_{1})}{(1+u)(1+u_{1})} H_n(s,u,u_{1}) \,du \,du_{1} \,ds \nonumber\\  
	= \int_{0}^{t} \int_{0}^{\infty} \int_{0}^{\infty} \frac{\tilde{\vartheta}(u,u_{1}) \Upsilon(u,u_{1})}{(1+u)(1+u_{1})} H(s,u,u_{1}) \,du \,du_{1} \,ds. \nonumber  
\end{align}  
Equivalently, this implies  
\begin{align}\label{coagconvergence}  
	\lim_{n\rightarrow \infty} \int_{0}^{t} \int_{0}^{\infty} \int_{0}^{\infty} \tilde{\vartheta}(u,u_{1}) \Upsilon_n(u,u_{1}) \xi_n(s,u)\xi_n(s,u_{1}) \,du \,du_{1} \,ds \nonumber\\  
	= \int_{0}^{t} \int_{0}^{\infty} \int_{0}^{\infty} \tilde{\vartheta}(u,u_{1}) \Upsilon(u,u_{1}) \xi(s,u)\xi(s,u_{1}) \,du \,du_{1} \,ds.  
\end{align}  
The identities \eqref{convergence-3}, \eqref{initialdataconvergence}, \eqref{gconvergence},  \eqref{betaconvergence},  \eqref{muconvergence},  \eqref{Fconvergence}, and  \eqref{coagconvergence}  ensure that $\xi$ satisfies \eqref{weak formulation} and complete the proof of  \Cref{main theorem}(a).
\end{proof}
\begin{corollary}\label{existenceofsumkernel}
	Under the assumptions of \Cref{uniqueness theorem}, at least one global weak solution $\xi$ to~\eqref{maineq.1}--\eqref{maineq.2} is obtained, satisfying \eqref{solsecondmomentbound}.
\end{corollary}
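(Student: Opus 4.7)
The plan is to invoke \Cref{main theorem}$(a)$ to produce the weak solution and then upgrade the conclusion with the extra regularity $\xi\in L^{\infty}([0,T];L^{1}([0,\infty);(1+u^{2})du))$ by transferring the uniform second moment estimate for the approximating sequence $(\xi_{n})_{n\ge 1}$ to the limit $\xi$.

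First, observe that the hypotheses of \Cref{uniqueness theorem} include all the hypotheses of \Cref{main theorem}$(a)$ (indeed, the requirements $\mu\in L^{\infty}([0,\infty))$ and the Lipschitz condition \eqref{Lipschitz growth} on $g$ are compatible with \eqref{mu} and \eqref{growthassum} with suitable constants). Therefore, \Cref{main theorem}$(a)$ provides a weak solution $\xi$ to \eqref{maineq.1}--\eqref{maineq.2} in the sense of \Cref{definitionofsolution}, obtained as the limit of the truncated sequence $(\xi_{n})_{n\ge 1}$ constructed in \Cref{Truncated Problem}. In particular, \eqref{convergence-3} and \Cref{improved convergence}$(a)$ hold for this $\xi$, and \eqref{limmomentbound} gives $\xi\in L^{\infty}(0,T;Y_{+})$.

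Next, since $\xi^{{\mathrm{in}}}\in L^{1}([0,\infty);u^{2}du)$, \Cref{second moment lemma-1} yields a constant $L(T)>0$ such that
\begin{align*}
\sup_{n\ge 1}\sup_{t\in [0,T]}\int_{0}^{\infty}u^{2}\xi_{n}(t,u)\,du\leq L(T).
\end{align*}
To pass this bound to the limit, fix $t\in [0,T]$ and $R>1$ and consider the bounded test function $u\mapsto u^{2}\chi_{[0,R]}(u)\in L^{\infty}([0,\infty))$. By \eqref{convergence-3}, we have
\begin{align*}
\int_{0}^{R}u^{2}\xi(t,u)\,du=\lim_{n\rightarrow\infty}\int_{0}^{R}u^{2}\xi_{n}(t,u)\,du\leq \liminf_{n\rightarrow\infty}\int_{0}^{\infty}u^{2}\xi_{n}(t,u)\,du\leq L(T).
\end{align*}
Letting $R\rightarrow\infty$ and invoking the monotone convergence theorem together with the non-negativity of $\xi$ ensured by \eqref{limmomentbound}, we obtain $M_{2}(\xi(t))\leq L(T)$ for every $t\in [0,T]$.

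Combining this second moment bound with \eqref{limmomentbound} gives $\xi\in L^{\infty}([0,T];L^{1}([0,\infty);(1+u^{2})du))$, which is precisely \eqref{solsecondmomentbound}. No serious obstacle arises here: the result is essentially a bookkeeping corollary, and the only delicate point is ensuring that the weak convergence is strong enough to pass a second moment bound. This is handled by localizing via the bounded test function $u^{2}\chi_{[0,R]}$, for which \eqref{convergence-3} applies directly, followed by a monotone convergence argument to remove the cutoff.
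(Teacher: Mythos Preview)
Your proof is correct and follows essentially the same approach as the paper: invoke \Cref{main theorem}$(a)$ for existence and \Cref{second moment lemma-1} for the uniform second moment bound on the approximants, then pass to the limit. The paper's own proof is a one-line citation of these two results, whereas you have spelled out the passage to the limit via truncated test functions and monotone convergence; this extra detail is correct and fills in what the paper leaves implicit.
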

\begin{proof}
	The existence of a weak solution $\xi$ that satisfies \eqref{solsecondmomentbound} follows immediately from \Cref{main theorem}(a) and \Cref{second moment lemma-1}.
\end{proof}
\begin{proof}[\textbf{Proof of \Cref{main theorem}(b)}] Since $g$, $a$, $\mu$, and $\alpha$ belong to $Y_r$, we apply \eqref{subquadraticcoagassum}, \eqref{sublimboundcoagassum}, \eqref{trungmubetaF}, \eqref{truncoag}, \eqref{convergence-3}, \eqref{initialdataconvergence}, \Cref{improved convergence}(b), \cite[Proposition 2.61]{fonseca2007modern}, and Lebesgue's DCT to proceed similarly to the proof of \Cref{main theorem}(a) and establish that $\xi$ satisfies \eqref{weak formulation}. Hence, the proof is complete.
\end{proof}
\section{Uniqueness}\label{uniqueness section}
\noindent In this section, we focus on proving the uniqueness of weak solutions to \eqref{maineq.1}--\eqref{maineq.2}, as stated in \Cref{uniqueness theorem}. The existence of weak solutions follows directly from the result in \Cref{existenceofsumkernel}, so it remains only to prove uniqueness. Establishing uniqueness in coagulation-fragmentation models typically requires careful estimates on the coagulation and fragmentation terms, ensuring that small perturbations in the initial data do not lead to drastically different evolution. Similar techniques have been employed in previous studies on Smoluchowski’s coagulation equation and its variations, see~\cite{Stewart1990, giri2013uniqueness}, where uniqueness is often obtained under suitable growth conditions on the fragmentation and coagulation kernels. 

Throughout this section, we assume that the parameters $\Upsilon$, $\beta$, $\mu$, $\alpha$, $g$, $a$, and $\xi^{{\mathrm{in}}}$ satisfy the conditions of \Cref{uniqueness theorem}. Furthermore, we use the term \emph{solution to \eqref{maineq.1}--\eqref{maineq.2}} specifically to refer to a \emph{solution, as defined in \Cref{definitionofsolution}, that satisfies \eqref{solsecondmomentbound}}. The key ingredient in proving uniqueness is the stability result, which we now establish as a fundamental step in our argument. The following stability result ensures that any two solutions with the same initial data must coincide, thereby confirming the uniqueness statement in \Cref{uniqueness theorem}.

\begin{proposition} \label{continuousdependence result} 
	Let the hypotheses of \Cref{uniqueness theorem} be satisfied. Consider $\xi_{1}^{{\mathrm{in}}}$ and $\xi_{2}^{{\mathrm{in}}}$ as non-negative functions in $L^{1}\left([0, \infty); (1+u^{2})du\right)$. Let $T > 0$, and suppose $\xi_1$ and $\xi_2$ are weak solutions to \eqref{maineq.1}--\eqref{maineq.2} on $[0,T]$, corresponding to the initial data $\xi_{1}^{{\mathrm{in}}}$ and $\xi_{2}^{{\mathrm{in}}}$, respectively. Suppose further that
	\begin{equation}  
		\xi_i \in L^\infty\left(0,T;L^{1}\left([0, \infty); (1+u^{2})du\right)\right),\label{z014}  
	\end{equation}
for $i=1,2$. Then, a constant $L(T) > 0 $ is determined, which depends only on $T$, $\Upsilon_{0}$, $P_{\alpha}$, $ Q_{\alpha}$, $M$, $a_{1}$, $g_{0}$, $g_{1}$, $A$, $ \|\mu\|_{L^{\infty}([0,\infty))}$, and the $ L^\infty\left(0,T;L^{1}\left([0, \infty); (1+u^{2})du\right)\right)$-norms of $\xi_1$ and $\xi_2$, such that  
\begin{equation*}  
	\|\xi_{1}(t)-\xi_{2}(t)\|\leq L(T)	\|\xi_{1}^{{\mathrm{in}}}-\xi_{2}^{{\mathrm{in}}}\|, 
\end{equation*}
for every $t\in [0,T]$.  
\end{proposition}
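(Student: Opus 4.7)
Set $\eta := \xi_1 - \xi_2$ and $\bar\xi := \xi_1+\xi_2$. Subtracting the weak identities~\eqref{weak formulation} satisfied by $\xi_1$ and $\xi_2$ and decomposing
$\xi_1(u)\xi_1(u_1) - \xi_2(u)\xi_2(u_1) = \eta(u)\xi_2(u_1) + \xi_1(u)\eta(u_1)$,
one sees, after symmetrizing via the symmetry of $\widetilde\vartheta\,\Upsilon$, that $\eta$ satisfies a \emph{linear} weak equation whose coagulation contribution takes the form $\tfrac12\int\!\!\int \widetilde\vartheta\,\Upsilon\,\eta(u)\,\bar\xi(u_1)\,du\,du_1$. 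The target is a Gronwall estimate for $\|\eta(t)\|$, formally obtained by choosing the test function $\vartheta(u) = (1+u)\,\mathrm{sgn}(\eta(t,u))$; this is inadmissible because it is time-dependent, discontinuous, and involves $\eta(t,\cdot)$, which is merely in $L^1$.

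The remedy is a two-parameter regularization in the spirit of DiPerna-Lions. Extend $\eta(t,\cdot)$ by zero to $u<0$ and define $\eta_\delta(t,\cdot) := \rho_\delta * \eta(t,\cdot)$ for a one-sided mollifier $\rho_\delta$ on $\mathbb{R}$ supported in $(-\delta,0)$. The distributional form of the $\eta$-equation then reads
\[
\partial_t\eta_\delta + \partial_u(g\,\eta_\delta) = \mathcal{R}_\delta + \rho_\delta * \bigl(-\mu\eta + \mathcal{F}(\eta) + \mathcal{K}(\eta)\bigr),
\]
where $\mathcal{K}$ denotes the symmetrized bilinear coagulation contribution and $\mathcal{R}_\delta := \partial_u(g\eta_\delta) - \rho_\delta*\partial_u(g\eta)$ is the transport commutator. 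Hypotheses~\eqref{Lipschitz growth} and~\eqref{growthassum} place $g \in W^{1,\infty}_{\mathrm{loc}}([0,\infty))$ with at most linear growth, so the DiPerna-Lions commutator lemma yields $\mathcal{R}_\delta \to 0$ in $L^1((0,T); L^1_{\mathrm{loc}}([0,\infty)))$ as $\delta\downarrow 0$. Multiplying by $(1+u)\,\sigma_\varepsilon(\eta_\delta)$, where $\sigma_\varepsilon : \mathbb{R}\to[-1,1]$ is a smooth odd nondecreasing approximation of $\mathrm{sgn}$ with primitive $\Sigma_\varepsilon$, and integrating by parts on $[0,\infty)$ produces a boundary trace $-g(0)\,\Sigma_\varepsilon(\eta_\delta(t,0))$; with the one-sided mollifier this trace converges as $\delta\to 0$ to a quantity bounded in absolute value by $\int_0^\infty a\,|\eta|\,du_1$ via the renewal condition~\eqref{maineq.2}.

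Letting first $\delta\to 0$ and then $\varepsilon\to 0$ (using dominated convergence together with $\sigma_\varepsilon(x)x-\Sigma_\varepsilon(x)\to 0$ pointwise), and discarding the non-positive death term, one arrives at
\[
\|\eta(t)\| \le \|\eta^{\mathrm{in}}\| + \int_0^t \bigl[I_g + I_a + I_\alpha + I_\Upsilon\bigr](s)\,ds,
\]
with each integrand bounded by $C(T)\|\eta(s)\|$ as follows: $I_g \le \max(g_0,g_1)\|\eta(s)\|$ by~\eqref{growthassum}; $I_a \le a_1\|\eta(s)\|$ by~\eqref{birthrateassum} together with the boundary bound above; $I_\alpha \le (M-1)(P_\alpha+Q_\alpha)\|\eta(s)\|$ via the pointwise inequality $\psi((1+\cdot)\mathrm{sgn}(\eta))(u_1)\,\mathrm{sgn}(\eta(u_1)) \le n(u_1)-1$ combined with~\eqref{particlebound}, \eqref{lmassconservation} and~\eqref{alphaassum}. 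The crucial coagulation contribution is treated through the key pointwise bound
\[
\widetilde\vartheta(u,u_1)\,\eta(s,u) \le (1+2u_1)\,|\eta(s,u)|,
\]
which, combined with~\eqref{multiplicativecoagassum}, gives
\[
I_\Upsilon \le \tfrac{\Upsilon_0}{2}\,\|\eta(s)\|\int_0^\infty (1+u_1)(1+2u_1)\,\bar\xi(s,u_1)\,du_1,
\]
the last integral being finite uniformly in $s\in[0,T]$ precisely by the second-moment hypothesis~\eqref{z014}. Gronwall's lemma then delivers $\|\eta(t)\|\le L(T)\|\eta^{\mathrm{in}}\|$ with $L(T)=e^{K(T)T}$ depending only on the quantities listed in the statement.

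The principal obstacle is the joint management of the DiPerna-Lions commutator $\mathcal{R}_\delta$ and the renewal boundary condition: convolution is non-local and breaks the half-line structure, so the one-sided mollifier must be chosen precisely so that $\mathcal R_\delta$ vanishes while the boundary trace of $g\,\eta_\delta$ at $u=0$ converges to the value $\int a\,\eta\,du_1$ prescribed by~\eqref{maineq.2} and is subsequently absorbed by the $I_a$-term. The remaining work is bookkeeping of moment estimates, with the indispensable role of the second-moment hypothesis~\eqref{z014} appearing only in the control of $I_\Upsilon$.
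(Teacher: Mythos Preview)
Your overall strategy---DiPerna--Lions regularization followed by the weighted sign-multiplier argument and Gronwall---is exactly what the paper does, and your estimates for the coagulation, fragmentation, and growth contributions (including the key pointwise inequality $\widetilde\vartheta(u,u_1)\,\eta\le(1+2u_1)|\eta|$ and the fact that the second-moment hypothesis~\eqref{z014} is needed only to control $I_\Upsilon$) match the paper's computations in~\eqref{Contributionbycoag}--\eqref{Contributionbyfrag} essentially line by line.

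The gap is in your treatment of the renewal boundary term. With a one-sided mollifier supported in $(-\delta,0)$, the test function $\vartheta(u_1)=\rho_\delta(u-u_1)$ satisfies $\vartheta(0)=\rho_\delta(u)=0$ for every $u>0$, so---as you correctly record---the term $\vartheta(0)\!\int a\,\eta$ from the weak formulation disappears in the mollified equation. But then your claim that the boundary trace $g(0)\,\Sigma_\varepsilon(\eta_\delta(t,0))$ ``converges as $\delta\to 0$ to a quantity bounded by $\int a|\eta|$ via the renewal condition~\eqref{maineq.2}'' is unjustified: weak solutions satisfy only the integrated identity~\eqref{weak formulation}, not the pointwise limit~\eqref{maineq.2}, and $\eta_\delta(t,0)=\int_0^\delta\rho_\delta(-v)\eta(t,v)\,dv$ can blow up like $\delta^{-1}\!\int_0^\delta|\eta|$ for a generic $L^1$ function $\eta$. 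No information linking this trace to $\int a\,\eta$ survives once the $\vartheta(0)$ term has been annihilated.

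The paper sidesteps this entirely: it extends $e$, $g$, etc.\ to all of $\mathbb{R}$ (with $\tilde g(u)=g(0)$ for $u<0$) and uses a \emph{two-sided} mollifier. Then testing~\eqref{weak formulation} with $\vartheta(u_1)=\rho_\delta(u-u_1)$ keeps $\vartheta(0)=\rho_\delta(u)\not\equiv 0$, so the renewal contribution survives as an explicit source $\rho_\delta(u)\!\int a\,\tilde e$ in the mollified equation~\eqref{moleq}. Integration is then over $\mathbb{R}$, with no boundary, and the renewal term is bounded directly by $4a_1\!\int\zeta|\tilde e|$ (see~\eqref{delatbetabound}). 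Switching to this extension-plus-two-sided-mollifier device repairs your argument; the rest of your outline goes through unchanged.
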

We will prove \Cref{continuousdependence result} in the same vein as \cite[Proposition 5.1]{Giri2025well}, proceeding in multiple steps, starting with the derivation of an integral inequality for a weighted $L^1$-norm of the difference $\xi_1 - \xi_2$ in \Cref{lem.diffineq}. For this purpose, let us introduce the following notation 
\begin{align*}  
	& e := \xi_1 - \xi_2, \quad e_0 := e(0,\cdot) = \xi_{1}^{{\mathrm{in}}} - \xi_{2}^{{\mathrm{in}}}, \quad \mathcal{F}_{d} := \mathcal{F}(\xi_{1}) - \mathcal{F}(\xi_{2}), \\  
	& \mathcal{C}_{d} := \mathcal{C}(\xi_{1}) - \mathcal{C}(\xi_{2}), \quad \text{and} \quad \nu(t) := \int_{0}^{\infty} \zeta(u) |e(t,u)| \,du,  
\end{align*}  
for $(t,u) \in [0,T] \times (0,\infty)$, where the weight function $\zeta$ is defined as  
\begin{equation*}  
	\zeta(u):= 1 + |u|, \quad u \in \mathbb{R}.  
\end{equation*}
Then the following lemma is an immediate implication of \eqref{z014}, \eqref{multiplicativecoagassum}, \eqref{alphaassum}, \eqref{particlebound}, and \eqref{lmassconservation}. 
\begin{lemma}\label{lem.Swd}
	$e\in  L^\infty\left(0,T;L^{1}\left([0, \infty); (1+u^{2})du\right)\right)$ and ~$\mathcal{C}_{d}, \mathcal{F}_{d}\in  L^\infty\left(0,T;L^{1}\left([0, \infty); (1+u)du\right)\right)$ for every $T>0$.
\end{lemma}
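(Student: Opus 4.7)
The plan is to verify each of the three claims by direct moment estimates, relying on the assumption \eqref{z014} that both $\xi_1$ and $\xi_2$ have bounded second moments, together with the structural bounds \eqref{multiplicativecoagassum}, \eqref{alphaassum}, \eqref{particlebound}, and \eqref{lmassconservation}.

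For the first claim on $e$, since $L^\infty\left(0,T;L^{1}([0,\infty);(1+u^{2})du)\right)$ is a vector space, the assumption \eqref{z014} immediately gives $e = \xi_1 - \xi_2 \in L^\infty\left(0,T;L^{1}([0,\infty);(1+u^{2})du)\right)$. No further work is required here.

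For the coagulation difference $\mathcal{C}_d$, by linearity it suffices to prove that $\mathcal{C}(\xi_i)\in L^\infty\left(0,T;L^{1}([0,\infty);(1+u)du)\right)$ for $i=1,2$. I will estimate the weighted $L^1$-norm of $\mathcal{C}(\xi_i)(t,\cdot)$ by splitting into the gain and loss terms. Using Tonelli's theorem and the change of variables $v=u-u_1$ in the gain term, the weighted integral becomes
\begin{align*}
\int_0^\infty (1+u)|\mathcal{C}(\xi_i)(t,u)|\,du \leq \frac{1}{2}\int_0^\infty\!\!\int_0^\infty (1+v+u_1)\Upsilon(v,u_1)\xi_i(t,v)\xi_i(t,u_1)\,du_1 dv \\
 + \int_0^\infty\!\!\int_0^\infty (1+u)\Upsilon(u,u_1)\xi_i(t,u)\xi_i(t,u_1)\,du_1 du.
\end{align*}
Applying \eqref{multiplicativecoagassum} and expanding the polynomial factors produces a sum of products of moments of order at most two in $\xi_i$, all controlled uniformly on $[0,T]$ by \eqref{z014}. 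Hence $\mathcal{C}(\xi_i)$ lies in the claimed space.

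For the fragmentation difference $\mathcal{F}_d$, again by linearity I only need to bound each $\mathcal{F}(\xi_i)$ in $L^\infty\left(0,T;L^{1}([0,\infty);(1+u)du)\right)$. I will write
\begin{align*}
\int_0^\infty (1+u)|\mathcal{F}(\xi_i)(t,u)|\,du \leq \int_0^\infty (1+u)\alpha(u)\xi_i(t,u)\,du + \int_0^\infty \alpha(u_1)\xi_i(t,u_1)\psi_{\beta}(u_1)\,du_1,
\end{align*}
where $\psi_{\beta}(u_1):=\int_0^{u_1}(1+u)\beta(u|u_1)\,du$. The bounds \eqref{particlebound} and \eqref{lmassconservation} give $\psi_{\beta}(u_1)\le M + u_1$, and \eqref{alphaassum} gives $\alpha(u)\le P_\alpha u + Q_\alpha$. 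Substituting these in yields an estimate purely in terms of $M_0(\xi_i(t))$, $M_1(\xi_i(t))$, and $M_2(\xi_i(t))$, all uniformly bounded on $[0,T]$ by \eqref{z014}.

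There is no real obstacle: the argument is entirely bookkeeping, combining Tonelli/Fubini, the change of variables in the coagulation gain, and the growth bounds on the kernels. The mildly delicate point is ensuring that every moment that appears in the estimates is of order at most two so that \eqref{z014} suffices; the bound $\Upsilon(u,u_1)\le \Upsilon_0(1+u)(1+u_1)$ together with the extra weight $(1+u)$ produces at worst $(1+u)^2(1+u_1)$, which is exactly matched by the hypothesis, and similarly for the fragmentation term where the weighted $\beta$-integral contributes only one additional power of $u_1$.
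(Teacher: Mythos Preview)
Your proposal is correct and follows essentially the same approach as the paper's own proof: both argue that $e$ inherits the bound directly from \eqref{z014}, then estimate $\int_0^\infty (1+u)|\mathcal{C}(\xi_i)(t,u)|\,du$ and $\int_0^\infty (1+u)|\mathcal{F}(\xi_i)(t,u)|\,du$ separately via Fubini, the change of variables in the coagulation gain term, and the kernel bounds \eqref{multiplicativecoagassum}, \eqref{alphaassum}, \eqref{particlebound}, \eqref{lmassconservation}, reducing everything to moments of order at most two. The only cosmetic difference is that the paper first bounds $\zeta(u+u_1)\le\zeta(u)+\zeta(u_1)$ and uses symmetry to collapse the two coagulation integrals into a single term before applying \eqref{multiplicativecoagassum}, whereas you keep them separate; the content is identical.
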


\begin{proof}
	For $T>0$, it follows directly from~\eqref{z014} that $e$ belongs to $L^\infty\left(0,T;L^{1}\left([0, \infty); (1+u^{2})du\right)\right)$. Given $t \in [0,T]$, we apply~\eqref{multiplicativecoagassum} along with Fubini’s theorem to derive the following inequality for each $i \in \{1,2\}$
	\begin{align}
		\int_0^\infty \zeta(u) |\mathcal{C}(\xi_i)(t,u)|\;du & \le \frac{1}{2} \int_0^\infty \int_0^\infty \zeta(u+u_{1}) \Upsilon(u,u_{1}) \xi_i(t,u) \xi_i(t,u_{1})\;du_{1}du\nonumber\\
		& \quad + \int_0^\infty \int_0^\infty \zeta(u) \Upsilon(u,u_{1}) \xi_i(t,u) \xi_i(t,u_{1})\;du_{1}du \nonumber\\
		& \le 2 \int_0^\infty \int_0^\infty \zeta(u) \Upsilon(u,u_{1}) \xi_i(t,u) \xi_i(t,u_{1})\;du_{1}du \nonumber\\
		& \le 2\Upsilon_{0} \int_0^\infty \int_0^\infty (1+u)^{2}(1+u_{1}) \xi_i(t,u) \xi_i(t,u_{1})\;du_{1}du \nonumber\\
		& \leq 4\Upsilon_{0}\left[M_{0}(\xi_i(t))+M_{2}(\xi_i(t))\right]\left[M_{0}(\xi_i(t))+M_{1}(\xi_i(t))\right]\label{Coagterm}. 
	\end{align} 
	Similarly, by using Fubini's theorem, \eqref{alphaassum}, \eqref{particlebound}, and \eqref{lmassconservation}, we obtain
	\begin{align}
		\int_0^\infty \zeta(u) |\mathcal{F}(\xi_i)(t,u)|\;du & \le   \int_0^\infty  \zeta(u)\alpha(u)\xi_i(t,u) \;du + \int_0^\infty \int_u^\infty \zeta(u) \alpha(u_{1}) \beta(u|u_{1})  \xi_i(t,u_{1})\;du_{1}du \nonumber\\
		&  \le   \int_0^\infty  (1+u)\left(P_{\alpha}u+Q_{\alpha}\right)\xi_i(t,u) \;du \nonumber\\
		&\quad + \int_0^\infty \int_{0}^{u_{1}} (1+u) \alpha(u_{1}) \beta(u|u_{1})  \xi_i(t,u_{1})\;dudu_{1} \nonumber\\
		& = P_{\alpha}M_{2}(\xi_{i}(t))+\left(P_{\alpha}+Q_{\alpha}\right)M_{1}(\xi_{i}(t))+Q_{\alpha}M_{0}(\xi_{i}(t))\nonumber\\
		&\quad +\int_0^\infty  \left(n(u_{1})+u_{1}\right) \left(P_{\alpha}u_{1}+Q_{\alpha}\right)  \xi_i(t,u_{1})\;du_{1}\nonumber\\
		& \le 2P_{\alpha}M_{2}(\xi_{i}(t))+\left[(M+1)P_{\alpha}+2Q_{\alpha}\right]M_{1}(\xi_{i}(t))\nonumber\\
		&\quad +\left[(M+1)Q_{\alpha}\right]M_{0}(\xi_{i}(t)),\label{Fragterm}
	\end{align}
for each $t\in [0,T]$ and $i \in \{1,2\}$. From~\eqref{z014}, we conclude that the right-hand sides of \eqref{Coagterm} and \eqref{Fragterm} belong to $L^\infty(0,T)$. Consequently, both $\mathcal{C}(\xi_{i})$ and $ \mathcal{F}(\xi_{i})$ lie in $ L^\infty\left(0,T;L^{1}\left([0, \infty); (1+u)du\right)\right)$ for $i \in \{1,2\}$. As a result, the differences $\mathcal{C}_{d} = \mathcal{C}(\xi_{1}) - \mathcal{C}(\xi_{2})$ and $ \mathcal{F}_{d} = \mathcal{F}(\xi_{1}) - \mathcal{F}(\xi_{2})$ also belong to the same space $ L^\infty\left(0,T;L^{1}\left([0, \infty); (1+u)du\right)\right)$.
\end{proof}
Next, we aim to derive the following inequality.
\begin{lemma}\label{lem.diffineq}
Let the hypotheses of \Cref{uniqueness theorem} hold. For any $T > 0$ and $t \in [0,T]$, a positive constant $L_0$ is determined, depending solely on $a_{1}$, $g_{0}$, $g_{1}$, $A$, and $\|\mu\|_{L^{\infty}([0,\infty))}$, such that  
\begin{equation}  
	\nu(t) \le \nu(0) + L_0 \int_0^t \nu(s) \;ds + \int_0^t \int_0^\infty \zeta(u) \left[\mathcal{C}_{d}(s,u)+\mathcal{F}_{d}(s,u)\right]\, \mathrm{sign}(e(s,u)) \;duds. \label{z001}  
\end{equation}
\end{lemma}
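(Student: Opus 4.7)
The plan is to treat $e = \xi_1 - \xi_2$ as a distributional solution of the transport-with-source equation $\partial_t e + \partial_u(g e) + \mu e = \mathcal{F}_d + \mathcal{C}_d$ on $(0,T)\times(0,\infty)$, obtained by subtracting the weak formulations satisfied by $\xi_1$ and $\xi_2$, together with the inherited flux boundary condition $\lim_{u\to 0^+} g(u) e(t,u) = \int_0^\infty a(u_1) e(t,u_1)\,du_1$. My aim is to derive the renormalized form of this equation for $|e|$, test it against the weight $\zeta(u) = 1+u$, integrate by parts, and handle the boundary flux at $u=0$ via the renewal condition; the result will be a differential inequality for $\nu$ whose integrated form is \eqref{z001}.

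The principal difficulty is that $e$ merely belongs to $L^\infty(0,T;L^{1}([0,\infty);(1+u^{2})du))$, so one cannot apply the chain rule to $|e|$ directly. Following the DiPerna--Lions framework \cite{diperna1989ordinary,perthame2006transport} as adapted in \cite[Theorem 2.4]{Giri2025well}, I would convolve $e$ with a smooth mollifier $\rho_\varepsilon$ in the size variable, producing a regularized equation with a DiPerna--Lions commutator $R_\varepsilon := \partial_u(g\,(\rho_\varepsilon * e)) - \rho_\varepsilon * \partial_u(g e)$. The Lipschitz hypothesis \eqref{Lipschitz growth} on $g$ is precisely what is needed to invoke the classical Friedrichs-type lemma, ensuring $R_\varepsilon \to 0$ strongly in $L^{1}_{\mathrm{loc}}$ with a rate controlled by the Lipschitz constant $A$. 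I would then multiply the smooth equation by $\phi_\delta'(e_\varepsilon)$, where $\phi_\delta(s) = \sqrt{s^2 + \delta^2} - \delta$ approximates $|s|$, and pass to the limits $\varepsilon \to 0$ and $\delta \to 0$ to obtain, in the distributional sense, $\partial_t |e| + \partial_u(g|e|) + \mu|e| = (\mathcal{F}_d + \mathcal{C}_d)\,\mathrm{sign}(e)$.

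Testing this identity against $\zeta$ over $(0,R)$ and integrating by parts produces the inner boundary term $\zeta(0)\lim_{u\to 0^+} g(u)|e(t,u)|$; since $g \geq 0$, one has $g|e| = |g e|$, and the renewal condition combined with \eqref{birthrateassum} bounds this by $\int_0^\infty a(u_1)|e(t,u_1)|\,du_1 \leq a_1 \nu(t)$. The outer boundary term at $R$ vanishes as $R \to \infty$ thanks to the uniform bound \eqref{z014}. The interior contribution $\int_0^\infty \zeta'(u) g(u)|e|\,du = \int_0^\infty g(u)|e|\,du$ is controlled by $\max\{g_0,g_1\}\,\nu(t)$ via \eqref{growthassum}, and the $\mu|e|$-term is at most $\|\mu\|_{L^\infty}\,\nu(t)$. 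Assembling these estimates yields $\frac{d}{dt}\nu(t) \leq L_0\,\nu(t) + \int_0^\infty \zeta(u)[\mathcal{C}_d + \mathcal{F}_d](t,u)\,\mathrm{sign}(e(t,u))\,du$ for almost every $t \in [0,T]$, with $L_0$ depending only on $a_1, g_0, g_1, A$, and $\|\mu\|_{L^\infty}$, and integrating in time gives \eqref{z001}. The main technical obstacle will be carrying out the regularization rigorously in the presence of the unbounded growth of $g$ at infinity and the nontrivial flux condition at $u=0$; a truncation in the size variable, combined with the uniform weighted integrability supplied by \eqref{z014} and the Lipschitz control \eqref{Lipschitz growth}, should keep both the commutator $R_\varepsilon$ and the boundary contributions under control throughout the limit passage.
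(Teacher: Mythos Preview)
Your plan matches the paper's proof closely: DiPerna--Lions mollification of $e$ in the size variable, a $C^1$ approximation of $|\cdot|$ (the paper uses $\Lambda_\epsilon(u)=u^2/\sqrt{u^2+\epsilon}$, you use $\sqrt{s^2+\delta^2}-\delta$; either works), testing against $\zeta$, and passage to the limit using the Lipschitz bound~\eqref{Lipschitz growth} to control the commutator. The substantive difference lies in the treatment of the boundary at $u=0$. The paper extends $e$, $g$, $\mu$, $a$, $\mathcal{C}_d$, $\mathcal{F}_d$ to all of $\mathbb{R}$ (by zero, resp.\ by $g(0)$) \emph{before} mollifying; plugging $\vartheta(u_1)=\rho_\delta(u-u_1)$ into the weak formulation then converts the renewal condition into the localized source $\rho_\delta(u)\int_{\mathbb{R}}\tilde a\,\tilde e$ in the mollified equation~\eqref{moleq}, which is bounded directly (see~\eqref{delatbetabound}) and causes no trouble in the limit---no boundary term is ever extracted.

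Your proposal instead integrates by parts against $\zeta$ on $(0,R)$ and invokes the trace $\lim_{u\to 0^+}g(u)|e(t,u)|$. This is the one genuine gap: the weak-solution framework of Definition~\ref{definitionofsolution} encodes the renewal condition only through the term $\vartheta(0)\int_0^\infty a\xi$ in the integral identity~\eqref{weak formulation}, not as a pointwise boundary limit, so the existence of $\lim_{u\to 0^+}g(u)e(t,u)$ for a.e.\ $t$ is not a priori available and would have to be established separately before you can bound $g|e|$ at $0^+$ by $|\!\int a\,e|$. The paper's extension-to-$\mathbb{R}$ device bypasses this entirely; if you prefer to stay on $(0,\infty)$, you would need to keep test functions that do not vanish at $0$ throughout the regularization and track how the $\vartheta(0)$ contribution propagates under mollification---which, once written out, is exactly what the paper does.
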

It is important to note that all terms in \eqref{z001} are well-defined as a consequence of \eqref{z014} and \Cref{lem.Swd}.

In a rigorous framework, \Cref{lem.diffineq} is derived by integrating over both volume and time variables after multiplying the equation governing $e$ (obtained from~\eqref{maineq.1} for $\xi_1$ and $\xi_2$) by $\zeta\,\mathrm{sign}(e)$. As $\xi_1$ and $\xi_2$ are not differentiable on $(0,T)\times (0,\infty)$, a direct application of \eqref{maineq.1} is not feasible. To overcome this limitation, we employ an appropriate regularization inspired by the theory developed by DiPerna and Lions; see~\cite{diperna1989ordinary} and \cite[Appendix~6.1 and~6.2]{perthame2006transport}.
 More precisely, we begin by extending $e$, $\mathcal{C}_d$, and $\mathcal{F}_d$ to $[0,T] \times \mathbb{R}$, and $a$, $\mu$, $g$, and $e_0$ to $\mathbb{R}$. We denote their extended versions as $\tilde{e}$, $\tilde{\mathcal{C}}_d$, $\tilde{\mathcal{F}}_d$, $\tilde{a}$, $\tilde{\mu}$, $\tilde{g}$, and $\tilde{e}_0$, respectively. These extensions are defined as follows
\begin{align*}
	\tilde{e}(t,u):=\begin{cases}
		e(t,u)\ &\text{ for }\ u\in (0,\infty),\\
		0\ &\text{otherwise},
	\end{cases}\quad 
\tilde{\mathcal{C}_{d}}(t,u):=\begin{cases}
	\mathcal{C}_{d}(t,u)\ &\text{ for }\ u\in (0,\infty),\\
	0\ &\text{otherwise},
	\end{cases} 
\end{align*}
\begin{align*}
	\tilde{\mathcal{F}_{d}}(t,u):=\begin{cases}
		\mathcal{F}_{d}(t,u)\ &\text{ for }\ u\in (0,\infty),\\
		0\ &\text{otherwise},
	\end{cases}\quad
	\tilde{a}(u):=\begin{cases}
		a(u)\ &\text{ for }\ u\in [0,\infty),\\
		0\ &\text{ otherwise },
	\end{cases}
\end{align*}
\begin{align*}
	\tilde{\mu}(u):=\begin{cases}
		\mu(u)\ &\text{ for }\ u\in [0,\infty),\\
		0\ &\text{otherwise}, 
	\end{cases}\quad
		\tilde{g}(u):=\begin{cases}
			g(u)\ &\text{for}\ u\in [0,\infty),\\
			g(0)\ &\text{otherwise}, 
	\end{cases}
\end{align*}
\begin{align*}
	\text{and}\quad
	\tilde{e_{0}}(u):=\begin{cases}
	e(0,u)\ &\text{ for }\ u\in (0,\infty),\\
	0\ &\text{otherwise},
	\end{cases}
\end{align*}
for each $t \in [0,T]$ and $u\in \mathbb{R}$. It follows directly from~\eqref{growthassum} that $ \tilde{g}$ is weakly differentiable on $\mathbb{R}$, and its weak derivative is given by 
\begin{align*}
\tilde{g}^{\prime}(u) =
	\begin{cases}
		g^{\prime}(u) & \text{ for } u \in (0, \infty),\\
			0 & \text{ for } u \in (-\infty, 0).
	\end{cases}
\end{align*}
Specifically, for every bounded interval  $I\subset\mathbb{R}$, $\tilde{g}(\cdot)$ is an element of $W^{1,\infty}(I)$.  As a result, $\tilde{g}(\cdot)$ is absolutely continuous on $I$, which leads to the identity 
\begin{align}\label{growthtildelipschitz}
	\tilde{g}(u)-\tilde{g}(u_{1})=\int_{u_{1}}^{u} \tilde{g}^{\prime}(x)dx, \quad (u,u_{1})\in\mathbb{R}^2.
\end{align}
Furthermore, from~\eqref{Lipschitz growth} and \eqref{growthassum}, it follows that 
\begin{equation}\label{growthtilde}
	|\tilde{g}^{\prime}(u)|\leq A \;\;\text{ a.e.},\;\text{and}\quad |\tilde{g}(u)|\leq \max\left\{g_{0}, g_{1}\right\}\left(1+|u|\right), \quad u\in \mathbb{R}.
\end{equation}
Next, we consider a family of mollifiers $\left(\rho_{\delta}\right)_{\delta>0}$ defined by  
\begin{align*}
\rho_{\delta}(u) := \frac{1}{\delta} \rho\left(\frac{u}{\delta}\right), \quad u \in \mathbb{R}, \quad \delta \in (0,1),
\end{align*}  
where $0\leq \rho \in C_{c}^{\infty}(\mathbb{R})$ with $\operatorname{supp}(\rho) \subset (-1,1)$ and $\|\rho\|_{L^{1}(\mathbb{R})} = 1$.  
We then define the regularized functions as follows  
\begin{align*}
\tilde{e}^{\delta} := \tilde{e} \star \rho_{\delta}, \quad  
\tilde{e_{0}}^{\delta} := \tilde{e_{0}} \star \rho_{\delta}, \quad  
\tilde{\mathcal{C}_{d}}^{\delta} := \tilde{\mathcal{C}_{d}} \star \rho_{\delta}, \quad  
\text{and} \quad \tilde{\mathcal{F}_{d}}^{\delta} := \tilde{\mathcal{F}_{d}} \star \rho_{\delta}.
\end{align*}

For later use, we recall a classical result on convolution; see~\cite[Theorem 2.29]{adams2003sobolev} and~\cite[Lemma 5.4]{Giri2025well}.
\begin{lemma}\label{lem.convolution}
	Suppose $m\ge 0$ and $f \in L^1(\mathbb{R},(1+|u|^m)du)$. For each $\delta\in (0,1)$, the convolution $\rho_\delta\star f$ is a $C^\infty$-smooth function that remains in the weighted Lebesgue space $L^1(\mathbb{R},(1+|u|^m)du)$. Moreover, it satisfies the inequality
	\begin{equation*}
		\int_{\mathbb{R}} |(\rho_\delta\star f)(u)| (1+|u|^m)du \le 2^{m+1} \int_{\mathbb{R}} |f(u)| (1+|u|^m)du.
	\end{equation*}
	Additionally, we have the following convergence result
	\begin{equation*}
		\lim_{\delta\to 0} \int_{\mathbb{R}} |(\rho_\delta\star f - f)(u)| (1+|u|^m)\;du = 0.
	\end{equation*}
\end{lemma}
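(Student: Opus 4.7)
The claim is a classical mollifier estimate in weighted $L^1$-spaces, and I would prove it in three short steps.

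First, the smoothness of $\rho_\delta\star f$ follows from the standard argument: since $\rho_\delta\in C_c^{\infty}(\mathbb{R})$, one may differentiate under the integral sign arbitrarily many times, yielding $\rho_\delta\star f\in C^{\infty}(\mathbb{R})$. For the quantitative bound, the key observation is that $\mathrm{supp}(\rho_\delta)\subset(-\delta,\delta)$ with $\delta\in(0,1)$, so whenever $\rho_\delta(u-v)\neq 0$ one has $|u|\le|v|+1$, and a case distinction on whether $m\le 1$ (using subadditivity $(x+y)^m\le x^m+y^m$) or $m\ge 1$ (using convexity $(x+y)^m\le 2^{m-1}(x^m+y^m)$) gives the uniform pointwise estimate
\begin{equation*}
1+|u|^m\le 2^{m+1}(1+|v|^m),\quad \text{whenever}~|u-v|<1.
\end{equation*}
Combining $|(\rho_\delta\star f)(u)|\le\int_{\mathbb{R}}\rho_\delta(u-v)|f(v)|\,dv$ with Fubini's theorem and the above weight inequality then yields
\begin{equation*}
\int_{\mathbb{R}}|(\rho_\delta\star f)(u)|(1+|u|^m)\,du\le 2^{m+1}\int_{\mathbb{R}}|f(v)|(1+|v|^m)\,dv\int_{\mathbb{R}}\rho_\delta(w)\,dw,
\end{equation*}
which is the stated bound after using $\|\rho_\delta\|_{L^1}=1$.

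For the convergence statement, the plan is a standard density argument. Given $\varepsilon>0$, I would choose $\varphi\in C_c(\mathbb{R})$ such that $\|f-\varphi\|_{L^1(\mathbb{R},(1+|u|^m)du)}<\varepsilon$ (which is possible since $C_c(\mathbb{R})$ is dense in this weighted space). By linearity and the bound just established applied to $f-\varphi$,
\begin{equation*}
\|\rho_\delta\star f-f\|_{L^1(\mathbb{R},(1+|u|^m)du)}\le(2^{m+1}+1)\varepsilon+\|\rho_\delta\star\varphi-\varphi\|_{L^1(\mathbb{R},(1+|u|^m)du)}.
\end{equation*}
For the remaining term, $\rho_\delta\star\varphi\to\varphi$ uniformly on $\mathbb{R}$ as $\delta\to 0$, and since $\mathrm{supp}(\rho_\delta\star\varphi)\subset\mathrm{supp}(\varphi)+[-1,1]$, which is a fixed compact set, the weight $(1+|u|^m)$ is bounded on this support. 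Hence the second term tends to zero as $\delta\to 0$, and letting $\varepsilon\to 0$ completes the proof.

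I do not anticipate any genuine obstacle here: the proof is essentially a two-line computation plus density. The only mildly delicate point is the weight inequality $1+|u|^m\le 2^{m+1}(1+|v|^m)$ for all $m\ge 0$, since one must split into the cases $m\le 1$ and $m\ge 1$ to obtain a single universal constant; the constant $2^{m+1}$ is not sharp but is convenient and sufficient for the subsequent use of the lemma in establishing the uniqueness result.
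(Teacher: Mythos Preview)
Your proof is correct and is the standard mollifier argument. The paper itself does not prove this lemma; it simply records it as a classical result with references to \cite[Theorem~2.29]{adams2003sobolev} and \cite[Lemma~5.4]{Giri2025well}, so there is no proof in the paper to compare against. Your approach---the pointwise weight inequality $1+|u|^m\le 2^{m+1}(1+|v|^m)$ for $|u-v|<1$, combined with Fubini and a density argument---is exactly what lies behind those references.
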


Next, we present an immediate implication of \Cref{lem.Swd} and \Cref{lem.convolution}.

\begin{corollary}\label{cor.convdelta}
 The function $\tilde{e}^\delta$ belongs to $L^\infty(0,T; L^1(\mathbb{R}, (1+u^2)du))$, while $ \tilde{\mathcal{C}_{d}}^\delta$ and $ \tilde{\mathcal{F}_{d}}^\delta $ belong to $ L^\infty(0,T; L^1(\mathbb{R}, (1+|u|)du))$, for $0<\delta <1$.
\end{corollary}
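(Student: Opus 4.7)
The plan is to derive \Cref{cor.convdelta} by a direct pointwise-in-time application of \Cref{lem.convolution}, followed by taking an essential supremum over $t \in [0,T]$. The statement is essentially a regularization compatibility check, so I would keep the argument short and structural rather than computational.

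First, I would note that the zero-extensions preserve all relevant weighted $L^1$-norms in the size variable, since the weights $(1+u^2)$ and $(1+|u|)$ agree with $(1+u^m)$ on $(0,\infty)$ and the extended functions vanish on $(-\infty,0]$. Combined with \Cref{lem.Swd}, this gives that the quantities
\begin{align*}
K_2 &:= \esssup_{t \in [0,T]} \int_{\mathbb{R}} |\tilde{e}(t,u)|\,(1+u^2)\,du, \\
K_1^{\mathcal{C}} &:= \esssup_{t \in [0,T]} \int_{\mathbb{R}} |\tilde{\mathcal{C}_{d}}(t,u)|\,(1+|u|)\,du, \\
K_1^{\mathcal{F}} &:= \esssup_{t \in [0,T]} \int_{\mathbb{R}} |\tilde{\mathcal{F}_{d}}(t,u)|\,(1+|u|)\,du
\end{align*}
are all finite.

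Next, for almost every $t \in [0,T]$, I would apply \Cref{lem.convolution} in the variable $u$: with $m = 2$ to the slice $f = \tilde{e}(t,\cdot)$ and with $m = 1$ to the slices $f = \tilde{\mathcal{C}_{d}}(t,\cdot)$ and $f = \tilde{\mathcal{F}_{d}}(t,\cdot)$. Since convolution with $\rho_\delta$ acts only in $u$, the smoothness conclusion of \Cref{lem.convolution} transfers to the regularized functions at each fixed time, and the weighted-$L^1$ bound gives, for a.e.\ $t \in [0,T]$,
\begin{align*}
\int_{\mathbb{R}} |\tilde{e}^{\delta}(t,u)|\,(1+u^2)\,du &\leq 2^{3} K_2, \\
\int_{\mathbb{R}} |\tilde{\mathcal{C}_{d}}^{\delta}(t,u)|\,(1+|u|)\,du &\leq 2^{2} K_1^{\mathcal{C}}, \\
\int_{\mathbb{R}} |\tilde{\mathcal{F}_{d}}^{\delta}(t,u)|\,(1+|u|)\,du &\leq 2^{2} K_1^{\mathcal{F}}.
\end{align*}

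Taking the essential supremum over $t \in [0,T]$ on both sides gives exactly the claimed memberships. There is no genuine obstacle here: the only subtle point to verify is that the $L^\infty_t L^1_u$-structure is preserved under a $u$-convolution, which follows because the bound in \Cref{lem.convolution} is $t$-independent and the regularization acts fiberwise in $u$. Consequently, the corollary falls out immediately from the two lemmas already established.
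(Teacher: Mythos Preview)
Your proposal is correct and follows exactly the route the paper indicates: the corollary is stated there as ``an immediate implication of \Cref{lem.Swd} and \Cref{lem.convolution},'' and you have simply spelled out that implication by applying \Cref{lem.convolution} fiberwise in $u$ with $m=2$ and $m=1$ and then taking the essential supremum in $t$. There is nothing to add or correct.
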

Next, using the definitions of $\tilde{e}$, $\tilde{e_{0}}$, $\tilde{\mathcal{C}_{d}}$, $\tilde{g}$, $\tilde{a}$, $\tilde{\mu}$, and $\tilde{\mathcal{F}_{d}}$, the following identity is directly obtained from \Cref{definitionofsolution}
\begin{align*}  
	\int_{\mathbb{R}} \tilde{e}(t,u)\vartheta(u)\;du &= \int_{\mathbb{R}} \tilde{e}_0(u)\vartheta(u)\;du + \int_0^t \int_{\mathbb{R}} \vartheta'(u) \tilde{g}(u) \tilde{e}(s,u)\;duds \\  
	& \quad + \vartheta(0) \int_0^t \int_{\mathbb{R}} \tilde{a}(u)\tilde{e}(s,u)\;duds - \int_0^t \int_{\mathbb{R}} \tilde{\mu}(u)\vartheta(u)\tilde{e}(s,u)\;duds \\  
	& \quad + \int_0^t \int_{\mathbb{R}} \tilde{\mathcal{C}_{d}}(s,u)\vartheta(u)\;duds + \int_0^t \int_{\mathbb{R}} \tilde{\mathcal{F}_{d}}(s,u)\vartheta(u)\;duds,  
\end{align*}  
for all $t \in [0,T]$ and $\vartheta \in C_{c}^{1}(\mathbb{R})$. To obtain a regularized equation, we substitute $\vartheta(u_{1}) = \rho_\delta(u-u_{1})$ for a fixed $u\in \mathbb{R}$ into the above identity. This yields that $\tilde{e}^\delta$ satisfies the following equation  
\begin{equation*}  
	\partial_t \tilde{e}^\delta + \partial_u \left( \rho_\delta \star (\tilde{g} \tilde{e}) \right) = \tilde{\mathcal{C}_{d}}^\delta + \tilde{\mathcal{F}_{d}}^\delta - (\tilde{\mu} \tilde{e}) \ast \rho_{\delta} + \rho_{\delta}(u) \int_{\mathbb{R}} \tilde{a}(u_{1})\tilde{e}(s,u_{1})\;du_{1}, \quad (t,u) \in (0,T) \times \mathbb{R}.  
\end{equation*}  
Alternatively, we can express it as  
\begin{equation}  
	\partial_t \tilde{e}^\delta + \partial_u \left( \tilde{g} \tilde{e}^\delta \right) = D^\delta + \tilde{\mathcal{C}_{d}}^\delta + \tilde{\mathcal{F}_{d}}^\delta - (\tilde{\mu} \tilde{e}) \ast \rho_{\delta} + \rho_{\delta}(u) \int_{\mathbb{R}} \tilde{a}(u_{1})\tilde{e}(t,u_{1})\;du_{1}, \quad (t,u) \in (0,T) \times \mathbb{R},  
	\label{moleq}  
\end{equation}  
where  
\begin{equation*}  
	D^\delta := \partial_u \left(\tilde{g} \tilde{e}^\delta - \rho_\delta \star (\tilde{g}\tilde{e})\right).  
\end{equation*}  
For each $\epsilon \in (0,1)$, we introduce the function  
\begin{equation*}  
	\Lambda_{\epsilon}(u) := \frac{u^{2}}{\sqrt{u^{2}+\epsilon}}, \quad u \in \mathbb{R}.  
\end{equation*}  
This function is continuously differentiable for all $\epsilon \in (0,1)$ and satisfies the following key estimates for all $u\in \mathbb{R}$,  
\begin{align}  
	& \big| \Lambda_{\epsilon}(u) - |u| \big| \leq \min\{\sqrt{\epsilon}, |u|\},\;\big| \Lambda_{\epsilon}'(u) - \operatorname{sign}(u) \big| \leq \frac{2\epsilon}{u^2+\epsilon} \leq 2,\; \text{and} \quad \big|\Lambda_{\epsilon}'(u)\big| \leq 2, \label{sigmapointwiseconvergence} \\  
	& \big| u \Lambda_{\epsilon}'(u) - \Lambda_\epsilon(u) \big| \leq \min\{\sqrt{\epsilon}, |u|\}. \label{sigmabound}  
\end{align}  
For $t\in (0,T)$ and $u\in\mathbb{R}$, we derive the following evolution equation by applying the chain rule and using~\eqref{moleq},  
\begin{align}  
	\partial_t \Lambda_{\epsilon}(\tilde{e}^{\delta}) + \partial_u \left( \tilde{g} \Lambda_{\epsilon}(\tilde{e}^{\delta}) \right) &= \Lambda_{\epsilon}'(\tilde{e}^{\delta}) \left[ D^\delta + \tilde{\mathcal{C}_{d}}^\delta + \tilde{\mathcal{F}_{d}}^\delta - (\tilde{\mu} \tilde{e}) \ast \rho_{\delta} + \rho_{\delta}(u) \int_{\mathbb{R}} \tilde{a}(u_{1})\tilde{e}(t,u_{1}) \;du_{1} \right] \nonumber \\  
	&\quad - \tilde{g}^{\prime} \left[ \tilde{e}^{\delta} \Lambda_{\epsilon}'(\tilde{e}^{\delta}) - \Lambda_{\epsilon}(\tilde{e}^{\delta}) \right]. \label{z004}  
\end{align}  

Before proceeding further, we verify in the following lemma that each term in~\eqref{z004} belongs to the appropriate function space.
\begin{lemma}\label{convoestimate}
	For every $T>0$, there exists a positive constant $\mathcal{L}(T)$ that depends only on $T$, $\Upsilon_{0}$, $P_{\alpha}$, $Q_{\alpha}$, $M$, $a_{1}$, $g_{0}$, $g_{1}$, $A$, $\|\mu\|_{L^{\infty}(0,\infty)}$, $\rho$, $\xi_1$, and $\xi_2$ such that 
	\begin{subequations}\label{z005}
		\begin{align}
			\int_{\mathbb{R}} \zeta(u) \left| \partial_u\left(\tilde{g}(u) \Lambda_{\epsilon}(\tilde{e}^{\delta}(t,u))\right) \right| \;du & \le \frac{\mathcal{L}(T)}{\delta}, \label{z005a}\\
			\int_{\mathbb{R}} \zeta(u) \left| \Lambda_{\epsilon}'(\tilde{e}^{\delta}(t,u)) D^{\delta}(t,u) \right| \;du & \le \frac{\mathcal{L}(T)}{\delta}, \label{z005b}\\
			\int_{\mathbb{R}} \zeta(u) \left| \Lambda_{\epsilon}'(\tilde{e}^{\delta}(t,u)) \tilde{\mathcal{C}_{d}}^{\delta}(t,u) \right| \;du & \le \mathcal{L}(T), \label{z005c} \\
			\int_{\mathbb{R}} \zeta(u) \left| \Lambda_{\epsilon}'(\tilde{e}^{\delta}(t,u)) \tilde{\mathcal{F}_{d}}^{\delta}(t,u) \right| \;du & \le \mathcal{L}(T), \label{z005d} \\
			\int_{\mathbb{R}} \zeta(u) \left|\tilde{g}^{\prime}(u) \left[ \tilde{e}^{\delta} \Lambda_{\epsilon}'(\tilde{e}^{\delta}) - \Lambda_{\epsilon}(\tilde{e}^{\delta}) \right](t,u)  \right| \;du & \le \mathcal{L}(T), \label{z005e} \\
			\int_{\mathbb{R}} \zeta(u) \left| \Lambda_{\epsilon}'(\tilde{e}^{\delta}(t,u))\left((\tilde{\mu}\tilde{e})\ast \rho_{\delta}\right)(t,u) \right| \;du & \le \mathcal{L}(T), \label{z005f} \\
			\int_{\mathbb{R}} \zeta(u) \left| \Lambda_{\epsilon}'(\tilde{e}^{\delta}(t,u))  \rho_{\delta}(u) \int_{\mathbb{R}}  \tilde{a}(u_{1})\tilde{e}(t,u_{1})\;du_{1} \right| \;du & \le \mathcal{L}(T), \label{z005g} \\
		\text{and} \quad	\int_{\mathbb{R}} \zeta(u) \left| 	\partial_t	\Lambda_{\epsilon}(\tilde{e}^{\delta}(t,u)) \right| \;du & \le \frac{\mathcal{L}(T)}{\delta}, \label{z005h} 
		\end{align}
	for all $t\in [0,T]$.
	\end{subequations}
\end{lemma}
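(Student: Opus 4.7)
The plan is to bound each of the seven quantities~\eqref{z005a}--\eqref{z005g} separately by exploiting three standard ingredients: the pointwise estimates $|\Lambda_{\epsilon}(u)|\le|u|$, $|\Lambda_{\epsilon}'|\le 2$ and $|u\Lambda_{\epsilon}'(u)-\Lambda_{\epsilon}(u)|\le|u|$ extracted from~\eqref{sigmapointwiseconvergence}--\eqref{sigmabound}; the pointwise growth bounds $|\tilde{g}(u)|\le\max\{g_{0},g_{1}\}(1+|u|)$, $|\tilde{g}'|\le A$, $|\tilde{\mu}|\le\|\mu\|_{L^{\infty}([0,\infty))}$, and $|\tilde{a}(u)|\le a_{1}(1+|u|)$ on the coefficients; and the weighted convolution inequality of~\Cref{lem.convolution}, combined with the moment information supplied by~\Cref{lem.Swd}, \Cref{cor.convdelta} and~\eqref{z014}, namely $\tilde{\mathcal{C}_{d}},\tilde{\mathcal{F}_{d}}\in L^{\infty}(0,T;L^{1}(\mathbb{R};(1+|u|)du))$ and $\tilde{e}\in L^{\infty}(0,T;L^{1}(\mathbb{R};(1+u^{2})du))$, with norms controlled by the quantities listed in the statement. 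The bound~\eqref{z005h} will follow by assembling the preceding estimates through equation~\eqref{moleq}.

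First, I would dispose of the $\delta$-uniform estimates~\eqref{z005c}--\eqref{z005g}, in which only $\rho_{\delta}$ (and not its derivative) appears. The bounds~\eqref{z005c} and~\eqref{z005d} follow from $|\Lambda_{\epsilon}'|\le 2$ and a direct application of~\Cref{lem.convolution} (with $m=1$) to $\tilde{\mathcal{C}_{d}}$ and $\tilde{\mathcal{F}_{d}}$. For~\eqref{z005e}, inequality~\eqref{sigmabound} yields $|\tilde{e}^{\delta}\Lambda_{\epsilon}'(\tilde{e}^{\delta})-\Lambda_{\epsilon}(\tilde{e}^{\delta})|\le|\tilde{e}^{\delta}|$, and since $|\tilde{g}'|\le A$ by~\eqref{growthtilde}, the estimate again reduces to~\Cref{lem.convolution}. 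For~\eqref{z005f}, boundedness of $\mu$ places $\tilde{\mu}\tilde{e}$ in $L^{\infty}(0,T;L^{1}(\mathbb{R};(1+|u|)du))$, so the same lemma applies. For~\eqref{z005g}, I would estimate the inner integral by $a_{1}\|\tilde{e}(t)\|_{L^{1}(\mathbb{R};(1+|u|)du)}$ via~\eqref{birthrateassum}, and use $\operatorname{supp}\rho_{\delta}\subset(-\delta,\delta)\subset(-1,1)$ to infer $\int_{\mathbb{R}}\zeta(u)\rho_{\delta}(u)du\le 2$.

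The bounds~\eqref{z005a}, \eqref{z005b} and~\eqref{z005h} genuinely involve $\rho_{\delta}'$ and are the source of the factor $1/\delta$. For~\eqref{z005a} I would expand $\partial_{u}(\tilde{g}\Lambda_{\epsilon}(\tilde{e}^{\delta}))=\tilde{g}'\Lambda_{\epsilon}(\tilde{e}^{\delta})+\tilde{g}\Lambda_{\epsilon}'(\tilde{e}^{\delta})\partial_{u}\tilde{e}^{\delta}$; the first summand is $\delta$-uniform exactly as above, while for the second I would use $\partial_{u}\tilde{e}^{\delta}=\rho_{\delta}'\star\tilde{e}$ together with the translation estimate $\zeta(v+y)\le\zeta(v)+|y|$ applied twice (to accommodate the extra factor $1+|u|$ coming from $\tilde{g}$) and the scaling identities $\|\rho_{\delta}'\|_{L^{1}}=\|\rho'\|_{L^{1}}/\delta$, $\int|y\rho_{\delta}'(y)|dy=\int|z\rho'(z)|dz$ and $\int y^{2}|\rho_{\delta}'(y)|dy=\delta\int z^{2}|\rho'(z)|dz$, absorbing the linear growth of $\tilde{g}$ against the quadratic moment of $\tilde{e}$ provided by~\eqref{z014}. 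The estimate~\eqref{z005b} follows the same pattern after writing $D^{\delta}=\tilde{g}'\tilde{e}^{\delta}+\bigl[\tilde{g}\,\partial_{u}\tilde{e}^{\delta}-\rho_{\delta}'\star(\tilde{g}\tilde{e})\bigr]$, the bracketed term being the DiPerna--Lions commutator, whose $L^{1}_{\zeta}$-norm is in fact $\delta$-uniform thanks to the Lipschitz estimate~\eqref{growthtildelipschitz}--\eqref{growthtilde}. Finally,~\eqref{z005h} follows by applying the chain rule $\partial_{t}\Lambda_{\epsilon}(\tilde{e}^{\delta})=\Lambda_{\epsilon}'(\tilde{e}^{\delta})\partial_{t}\tilde{e}^{\delta}$ to~\eqref{moleq}, since the only new contribution $\partial_{u}(\rho_{\delta}\star(\tilde{g}\tilde{e}))=\rho_{\delta}'\star(\tilde{g}\tilde{e})$ is controlled as in~\eqref{z005b}.

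The hard part will be the commutator-type bookkeeping underlying~\eqref{z005a} and~\eqref{z005b}: one has to track carefully how $\zeta$ interacts with $\rho_{\delta}'$ under convolution, and use the second moment of $\tilde{e}$ to absorb the linear growth of $\tilde{g}$, so that the various factors of $\delta$ collapse into a single $1/\delta$. Once this is in place, the dependence of $\mathcal{L}(T)$ on $T$, $\Upsilon_{0}$, $P_{\alpha}$, $Q_{\alpha}$, $M$, $a_{1}$, $g_{0}$, $g_{1}$, $A$, $\|\mu\|_{L^{\infty}([0,\infty))}$, a finite collection of $L^{1}$-moments of $\rho$, and the $L^{\infty}(0,T;L^{1}(\mathbb{R};(1+u^{2})du))$-norms of $\xi_{1}$ and $\xi_{2}$ is immediate by inspection.
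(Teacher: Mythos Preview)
Your proposal is correct and follows essentially the same route as the paper: the same expansions $\partial_{u}(\tilde{g}\Lambda_{\epsilon}(\tilde{e}^{\delta}))=\tilde{g}'\Lambda_{\epsilon}(\tilde{e}^{\delta})+\tilde{g}\Lambda_{\epsilon}'(\tilde{e}^{\delta})(\rho_{\delta}'\star\tilde{e})$ and $D^{\delta}=\tilde{g}'\tilde{e}^{\delta}+\tilde{g}(\rho_{\delta}'\star\tilde{e})-\rho_{\delta}'\star(\tilde{g}\tilde{e})$ are used, the $\delta$-uniform pieces~\eqref{z005c}--\eqref{z005g} are handled via $|\Lambda_{\epsilon}'|\le 2$, \eqref{sigmabound} and \Cref{lem.convolution}, and~\eqref{z005h} is recovered from the regularized equation and the preceding bounds. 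The only minor difference is that you note the commutator $\tilde{g}(\rho_{\delta}'\star\tilde{e})-\rho_{\delta}'\star(\tilde{g}\tilde{e})$ is actually $\delta$-uniform (which the paper establishes and exploits later, in the proof of \Cref{lem.diffineq}), whereas here the paper simply bounds each summand separately by $C/\delta$, which already suffices for~\eqref{z005b}.
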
	

\begin{proof}
First, we proceed to prove~\eqref{z005a} and~\eqref{z005b}. Since
	\begin{equation*}
		\partial_{u}\left(\tilde{g}\Lambda_{\epsilon}(\tilde{e}^{\delta})\right) = \Lambda_{\epsilon}(\tilde{e}^{\delta}) \tilde{g}^{\prime}  + \tilde{g} \Lambda_{\epsilon}'(\tilde{e}^{\delta}) \big(\rho_\delta'\star\tilde{e}\big) 
	\end{equation*}
	and
	\begin{equation}
		D^\delta = \tilde{g} \big(\rho_\delta'\star\tilde{e}\big) + \tilde{e}^\delta \tilde{g}^{\prime} - \rho_\delta'\star\big(\tilde{g}\tilde{e}\big), \label{z008}
	\end{equation}
it is sufficient to examine the integrability of each term independently. Employing~\eqref{growthtilde} and \Cref{lem.convolution} with $m=1$, we deduce the following estimate
	\begin{align*}
		\int_{\mathbb{R}} \zeta(u) \left| \tilde{e}^{\delta}(t,u) \tilde{g}^{\prime}(u) \right| \;du 
		&\le A \int_{\mathbb{R}} \zeta(u) \left| \tilde{e}^{\delta}(t,u) \right| \;du \\ 
		& \le 4A \int_{\mathbb{R}} \zeta(u) \left| \tilde{e}(t,u) \right| \;du = 4A \int_0^\infty (1+u) |e(t,u)|\;du.
	\end{align*}
Similarly, using~\eqref{sigmapointwiseconvergence}, \eqref{growthtilde}, and \Cref{lem.convolution} with $m=1$, we infer that
\begin{align*}
	\int_{\mathbb{R}} \zeta(u) \left| \Lambda_{\epsilon}(\tilde{e}^{\delta}(t,u)) \tilde{g}^{\prime}(u) \right| \;du  
	& \le  8A \int_0^\infty (1+u) |e(t,u)|\;du.
\end{align*} 
	Next, applying~\eqref{growthtilde} and Fubini's theorem, we infer that
	\begin{align*}
		&\int_{\mathbb{R}} \zeta(u) \left| \tilde{g}(u)\big(\rho_\delta'\star\tilde{e}\big)(t,u) \right|\;du \\
		& \le \max\left\{g_{0}, g_{1}\right\} \int_{\mathbb{R}}  \int_{\mathbb{R}}  (1+|u|)^{2} |\rho_\delta'(u_{1})| |\tilde{e}(t, u-u_{1})| \;du_{1}du \\
		& \le {\delta}^{-1}\max\left\{g_{0}, g_{1}\right\} \int_{\mathbb{R}} \int_{\mathbb{R}} (1+|u+\delta u_{1}|)^2  |\rho'(u_{1})| |\tilde{e}(t,u)| \;du_{1}du \\
		& \le {\delta}^{-1}\max\left\{g_{0}, g_{1}\right\} \|\rho'\|_{L^1(\mathbb{R})} \int_{\mathbb{R}} (2+|u|)^2  |\tilde{e}(t, u)| \;du \\
		& \le 8{\delta}^{-1}\max\left\{g_{0}, g_{1}\right\} \|\rho'\|_{L^1(\mathbb{R})} \int_0^\infty (1+u^2)  |e(t, u)| \;du.
	\end{align*}
Similarly, applying~\eqref{sigmapointwiseconvergence}, \eqref{growthtilde}, and Fubini's theorem, we obtain
\begin{align*}
	\int_{\mathbb{R}} \zeta(u) \left| \tilde{g}(u) \Lambda_{\epsilon}'(\tilde{e}^{\delta}(t,u)) \big(\rho_\delta'\star\tilde{e}\big)(t,u) \right|\;du  \le  16{\delta}^{-1}\max\left\{g_{0}, g_{1}\right\} \|\rho'\|_{L^1(\mathbb{R})} \int_0^\infty (1+u^2)  |e(t, u)| \;du.
\end{align*}
Finally, consider the last term on the right-hand side of \eqref{z008}. Using~\eqref{growthtilde}, we obtain
	\begin{align*}
		&\int_{\mathbb{R}} \zeta(u) \left| \rho_\delta'\star\big(\tilde{g}\tilde{e}\big)(t, u) \right|\;du \\
		& \le \max\left\{g_{0}, g_{1}\right\} \int_{\mathbb{R}}  \int_{\mathbb{R}} \left(1+|u-u_{1}|\right) (1+|u|) |\rho_\delta'(u_{1})| |\tilde{e}(t,u-u_{1})| \;du_{1}du \\
		& \le  {\delta}^{-1}\max\left\{g_{0}, g_{1}\right\} \int_{\mathbb{R}} \int_{\mathbb{R}} (1+|u+\delta u_{1}|) \left(1+|u|\right)  |\rho'(u_{1})| |\tilde{e}(t,u)| \;du_{1}du \\
		& \le  8{\delta}^{-1}\max\left\{g_{0}, g_{1}\right\} \|\rho'\|_{L^1(\mathbb{R})} \int_0^\infty (1+u^2)  |e(t,u)| \;du.
	\end{align*}
By combining the above estimates with~\eqref{z014}, we establish~\eqref{z005a} and~\eqref{z005b}. The bound in~\eqref{z005c} and \eqref{z005d} directly follow from \eqref{sigmapointwiseconvergence}, \Cref{lem.Swd}, \Cref{lem.convolution}, and \eqref{z014}, whereas~\eqref{z005e} is an immediate result of~\eqref{z014}, \eqref{growthtilde}, \eqref{sigmabound}, and \Cref{lem.convolution}.  
	Next, we proceed to prove \eqref{z005f} and \eqref{z005g}. Since $\mu \in L^{\infty}([0,\infty))$, by using \eqref{sigmapointwiseconvergence} and Fubini's theorem, we obtain
	\begin{align}
		&\int_{\mathbb{R}} \zeta(u) \left| \Lambda_{\epsilon}'(\tilde{e}^{\delta}(t,u))\left((\tilde{\mu}\tilde{e})\ast \rho_{\delta}\right)(t,u) \right| \;du \nonumber \\
		&\leq 2 	\int_{\mathbb{R}} (1+|u|) \int_{\mathbb{R}} \left|\tilde{\mu}(u-u_{1})\tilde{e}(t,u-u_{1})\right|\rho_{\delta}(u_{1})  \;du_{1}du\nonumber \\
		&\leq 2 	\int_{\mathbb{R}} (1+|u_{1}+u|) \int_{\mathbb{R}} \left|\tilde{\mu}(u)\tilde{e}(t,u)\right|\rho_{\delta}(u_{1})  \;dudu_{1}\nonumber\\
		&\leq 2 \|\mu \|_{L^{\infty}(0,\infty)} \left(	\int_{\mathbb{R}} (1+|u_{1}|) \rho_{\delta}(u_{1}) \;du_{1} \right) \left(\int_{0}^{\infty} (1+|u|)|e(t,u)| \;du \right)\nonumber\\
		&\leq 4 \|\mu \|_{L^{\infty}(0,\infty)}  \int_{\mathbb{R}} \zeta(u) \left| \tilde{e}(t,u) \right|\;du.\label{mudeltabound} 
	\end{align}
From \eqref{sigmapointwiseconvergence} and \eqref{birthrateassum}, we have
	\begin{align}
	&\int_{\mathbb{R}} \zeta(u) \left| \Lambda_{\epsilon}'(\tilde{e}^{\delta}(t,u))  \rho_{\delta}(u) \int_{\mathbb{R}}  \tilde{a}(u_{1})\tilde{e}(t,u_{1})\;du_{1} \right| \;du\nonumber\\
	&\leq 2a_{1} \int_{\mathbb{R}} (1+|u|)   \rho_{\delta}(u) \int_{0}^{\infty}  (1+u_{1}) \left| \tilde{e}(t,u_{1}) \right|\;du_{1} du \nonumber\\
	&\leq 4 a_{1} \int_{\mathbb{R}} \zeta(u) \left| \tilde{e}(t,u) \right|\;du. \label{delatbetabound}
	\end{align}
	By utilizing the estimates from \eqref{mudeltabound} and \eqref{delatbetabound}, along with~\eqref{z014}, we derive \eqref{z005f} and \eqref{z005g}. Lastly, applying~\eqref{z004} together with \eqref{z005a}–\eqref{z005g}, we establish~\eqref{z005h}.
\end{proof}	

\begin{proof}[Proof of \Cref{lem.diffineq}]
	We infer from \eqref{z004} that
	\begin{align*}
		& \frac{d}{dt} \int_{\mathbb{R}} \zeta(u) \Lambda_{\epsilon}(\tilde{e}^{\delta})\;du + \int_{\mathbb{R}} \zeta(u)\partial_{u} \left( \tilde{g} \Lambda_{\epsilon}(\tilde{e}^{\delta}) \right) \;du\\
		& \quad = \int_{\mathbb{R}} \zeta(u) \left(\Lambda_{\epsilon}'(\tilde{e}^{\delta}) D^{\delta} + \Lambda_{\epsilon}'(\tilde{e}^{\delta}) \tilde{\mathcal{C}_{d}}^{\delta} + \tilde{g}^{\prime} \left[ \Lambda_{\epsilon}(\tilde{e}^{\delta}) - \tilde{e}^{\delta}\Lambda_{\epsilon}'(\tilde{e}^{\delta}) \right] \right)\;du\nonumber\\
		&\qquad +  \int_{\mathbb{R}} \zeta(u)\Lambda_{\epsilon}'(\tilde{e}^{\delta}) \left(\tilde{\mathcal{F}_{d}}^\delta - (\tilde{\mu}\tilde{e})\ast \rho_{\delta}+  \rho_{\delta}(u) \int_{\mathbb{R}}  \tilde{a}(u_{1})\tilde{e}(t,u_{1})\;du_{1}\right) du.
	\end{align*}
Note that all the integrals in the above equation are absolutely convergent according to \Cref{convoestimate}. Next, by performing integration by parts to the second term on the left-hand side of the preceding equation, we obtain
	\begin{align*}
		&\frac{d}{dt} \int_{\mathbb{R}} \zeta(u) \Lambda_{\epsilon}(\tilde{e}^{\delta})\;du - \int_{\mathbb{R}} \zeta'(u) \tilde{g} \Lambda_\epsilon(\tilde{e}^\delta) \;du\\
		 &\quad = \int_{\mathbb{R}} \zeta(u) \left(\Lambda_{\epsilon}'(\tilde{e}^{\delta}) D^{\delta} + \Lambda_{\epsilon}'(\tilde{e}^{\delta}) \tilde{\mathcal{C}_{d}}^{\delta} + \tilde{g}^{\prime} \left[ \Lambda_{\epsilon}(\tilde{e}^{\delta}) - \tilde{e}^{\delta}\Lambda_{\epsilon}'(\tilde{e}^{\delta}) \right] \right)\;du\nonumber\\
		&\qquad +  \int_{\mathbb{R}} \zeta(u)\Lambda_{\epsilon}'(\tilde{e}^{\delta}) \left(\tilde{\mathcal{F}_{d}}^\delta - (\tilde{\mu}\tilde{e})\ast \rho_{\delta}+  \rho_{\delta}(u) \int_{\mathbb{R}}  \tilde{a}(u_{1})\tilde{e}(t,u_{1})\;du_{1}\right) du.
	\end{align*}
Owing to~\eqref{growthtilde}, \eqref{sigmapointwiseconvergence}, and \Cref{cor.convdelta}, the boundary terms in the previous equation disappear. Next, for $t\in [0,T]$, integrating over $(0,t)$ yields
	\begin{align}
			\int_{\mathbb{R}} \zeta(u) \Lambda_{\epsilon}(\tilde{e}^{\delta}(t,u))\;du & =  \int_{\mathbb{R}} \zeta(u) \Lambda_{\epsilon}(\tilde{e}^{\delta}(0,u))\;du + \int_{0}^{t} \int_{\mathbb{R}} \zeta'(u) \tilde{g} \Lambda_\epsilon(\tilde{e}^\delta) \;duds \nonumber\\
			& \quad + \int_{0}^{t}\int_{\mathbb{R}} \zeta(u) \left(\Lambda_{\epsilon}'(\tilde{e}^{\delta}) D^{\delta} + \Lambda_{\epsilon}'(\tilde{e}^{\delta}) \tilde{\mathcal{C}_{d}}^{\delta} + \tilde{g}^{\prime} \left[ \Lambda_{\epsilon}(\tilde{e}^{\delta}) - \tilde{e}^{\delta}\Lambda_{\epsilon}'(\tilde{e}^{\delta}) \right] \right)\;du ds\nonumber\\
			&\quad +  \int_{0}^{t}\int_{\mathbb{R}} \zeta(u)\Lambda_{\epsilon}'(\tilde{e}^{\delta}) \left(\tilde{\mathcal{F}_{d}}^\delta - (\tilde{\mu}\tilde{e})\ast \rho_{\delta}+  \rho_{\delta}(u) \int_{\mathbb{R}}  \tilde{a}(u_{1})\tilde{e}(s,u_{1})\;du_{1}\right)\;duds.\label{epdelmain-1}
	\end{align}
	At this stage, we observe that~\eqref{growthtilde} and~\eqref{sigmapointwiseconvergence} imply
	\begin{align*}
		I_1 & := \left| \int_{\mathbb{R}} \zeta^{\prime}(u) \tilde{g}(u) \Lambda_\epsilon(\tilde{e}^\delta(t,u)) \;du \right| \le 2\max\left\{g_{0}, g_{1}\right\} \int_{\mathbb{R}} \zeta(u)\left| \tilde{e}^\delta(t,u) \right| \;du. 
	\end{align*}
	Since
	\begin{align}
		\int_{\mathbb{R}} \zeta(u) \left| \tilde{e}^\delta(t,u) \right| \;du & \le \int_{\mathbb{R}} \int_{\mathbb{R}} \zeta(u) \rho_\delta(u_{1}) \left| \tilde{e}(t,u-u_{1}) \right| \;du_{1}du \nonumber\\
		& = \int_{\mathbb{R}} \int_{\mathbb{R}} \zeta(u+\delta u_{1}) \rho(u_{1}) \left| \tilde{e}(t,u) \right| \;du_{1}du \nonumber \\
		& \le \int_{\mathbb{R}} \left[\zeta(u)+\delta\right] \left| \tilde{e}(t,u) \right|\;du, \label{z009}
	\end{align}
we deduce that
	\begin{equation}
		I_1 \le 2\max\left\{g_{0}, g_{1}\right\} \int_{\mathbb{R}}  \left[\zeta(u)+\delta\right]  \left| \tilde{e}(t,u) \right| \;du. \label{z010}
	\end{equation}
	Next, by~\eqref{z008}, we conclude from~\eqref{growthtilde} and~\eqref{sigmapointwiseconvergence} that
	\begin{align}
		I_2 & := \left| \int_{\mathbb{R}} \zeta(u) \Lambda_{\epsilon}'(\tilde{e}^{\delta}(t,u)) D^{\delta}(t,u) \;du \right| \nonumber \\& 
		\le 2 \int_{\mathbb{R}} \zeta(u) \left| \tilde{g}(u) \big(\rho_\delta'\star\tilde{e}\big)(t,u) - \rho_\delta'\star\big(\tilde{g}\tilde{e}\big)(t,u) \right| \;du + 2A \int_{\mathbb{R}} \zeta(u) \left| \tilde{e}^{\delta}(t,u) \right| \;du.\label{TempI2}
	\end{align}
	At this moment, due to~\eqref{growthtildelipschitz} and \eqref{growthtilde}, we obtain
	\begin{align}
		& \int_{\mathbb{R}} \zeta(u) \left| \tilde{g}(u) \big(\rho_\delta'\star\tilde{e}\big)(t,u) - \rho_\delta'\star\big(\tilde{g}\tilde{e}\big)(t,u) \right| \;du \nonumber\\
		& \quad \le \int_{\mathbb{R}} \zeta(u) \left| \int_{\mathbb{R}} \left[  \tilde{g}(u) \rho_\delta'(u_{1}) \tilde{e}(t,u-u_{1}) - \rho_\delta'(u_{1}) \tilde{g}(u-u_{1}) \tilde{e}(t,u-u_{1}) \right]\;du_{1} \right| \;du \nonumber\\
		& \quad \le \int_{\mathbb{R}} \int_{\mathbb{R}} \zeta(u) \left|  \tilde{g}(u) - \tilde{g}(u-u_{1}) \right| |\rho_\delta'(u_{1})| \left| \tilde{e}(t,u-u_{1}) \right| \;du_{1}du \nonumber\\
		& \quad \le A \int_{\mathbb{R}} \int_{\mathbb{R}} \zeta(u) |u_{1} \rho_\delta'(u_{1})| \left| \tilde{e}(t,u-u_{1}) \right| \;du_{1}du\nonumber\\
		& \quad = A \int_{\mathbb{R}} \int_{\mathbb{R}} \zeta(u+\delta u_{1}) |u_{1} \rho'(u_{1})| \left| \tilde{e}(t,u) \right| \;du_{1}du \nonumber\\
		& \quad \le A I_{\rho} \int_{\mathbb{R}}  \left[\zeta(u)+\delta\right]\left| \tilde{e}(t,u) \right| \;du,\label{TempAIRho}
	\end{align}
	with 
	\begin{equation*}
	I_{\rho} := \int_{\mathbb{R}} |u\rho'(u)| \;du.
	\end{equation*}
	By incorporating the two estimates from~\eqref{TempI2} and \eqref{TempAIRho}~with~\eqref{z009}, we arrive at 
	\begin{equation}
		I_2 \le 2A(1+I_{\rho}) \int_{\mathbb{R}} \left[\zeta(u)+\delta\right] \left| \tilde{e}(t,u) \right| \;du. \label{z011}
	\end{equation}
Next, applying~\eqref{growthtilde} and~\eqref{z009} once again, together with~\eqref{sigmabound}, we derive
	\begin{align}
		I_3 & := \left| \int_{\mathbb{R}} \zeta(u)  \tilde{g}^{\prime}(u) \left[ \Lambda_{\epsilon}(\tilde{e}^{\delta}(t,u)) - \tilde{e}^{\delta}(t,u) \Lambda_{\epsilon}'(\tilde{e}^{\delta}(t,u)) \right]\;du \right| \nonumber \\
		& \le A \int_{\mathbb{R}} \zeta(u) \left| \tilde{e}^{\delta}(t,u) \right| \;du \le A  \int_{\mathbb{R}} \left[\zeta(u)+\delta\right] \left| \tilde{e}(t,u) \right| \;du. \label{z012}
	\end{align}
	By combining the estimates~\eqref{mudeltabound}, \eqref{delatbetabound}, \eqref{z010}, \eqref{z011}, \eqref{z012}, and applying~\eqref{z009} once more, we deduce from~\eqref{epdelmain-1} that a constant $L_{0}>0$ exists, depending on $a_{1}$, $g_{0}$, $g_{1}$, $A$, and $\|\mu \|_{L^{\infty}(0,\infty)}$, such that
	\begin{equation}
		\begin{split}
			\int_{\mathbb{R}} \zeta(u) \Lambda_\epsilon(\tilde{e}^\delta(t,u))\;du & \le \int_{\mathbb{R}} \left[\zeta(u)+\delta\right] \left| \tilde{e}_{0} \right| \;du + L_{0} \int_0^t \int_{\mathbb{R}}  \left[\zeta(u)+\delta\right] \left| \tilde{e}(s,u) \right| \;duds\\
			&\quad + \int_{0}^{t} \int_{\mathbb{R}} \zeta(u)\left[ \tilde{\mathcal{C}_{d}}^\delta(s,u)+ \tilde{\mathcal{F}_{d}}^\delta(s,u)\right] \Lambda_{\epsilon}^{\prime}(\tilde{e}^\delta(s,u)) \;duds.
		\end{split} \label{epdelmain-2}
	\end{equation}
To take the limit as $\delta \rightarrow 0$ in~\eqref{epdelmain-2}, we analyze each term individually. Applying~\eqref{sigmapointwiseconvergence}, we first examine the following term
	\begin{equation*}
		\left|\int_{\mathbb{R}} \zeta(u) \left[	\Lambda_{\epsilon}(\tilde{e}^{\delta}(t,u)) - \Lambda_{\epsilon}(\tilde{e}(t,u)) \right] \;du \right| \leq 2 \int_{\mathbb{R}} \zeta(u) \left| \tilde{e}^{\delta}(t,u) - \tilde{e}(t,u) \right| \;du. 
	\end{equation*}
Using~\Cref{lem.convolution} for $m=1$, we can pass to the limit as $\delta\rightarrow 0$ in the above inequality, yielding
	\begin{equation}
		\lim_{\delta\to 0} \int_{\mathbb{R}} \zeta(u) \Lambda_{\epsilon}(\tilde{e}^{\delta}(t,u)) \;du = \int_{\mathbb{R}} \zeta(u)  \Lambda_{\epsilon}(\tilde{e}(t,u)) \;du = \int_{0}^{\infty} \zeta(u)  \Lambda_{\epsilon}(e(t,u)) \;du. \label{dellim-3}
	\end{equation}
	Similarly,
	\begin{align}
		&\left| \int_{0}^{t} \int_{\mathbb{R}} \zeta(u) \left[ \Lambda_{\epsilon}'(\tilde{e}^{\delta}(s,u)) \tilde{\mathcal{C}_{d}}^{\delta}(s,u) - \Lambda_{\epsilon}'(\tilde{e}(s,u)) \tilde{\mathcal{C}_{d}}(s,u) \right] \;duds \right|\nonumber\\
		&\leq \int_0^t \int_{\mathbb{R}} \zeta(u) \left| \Lambda_{\epsilon}'(\tilde{e}^{\delta}(s,u)) \right| \left| \tilde{\mathcal{C}_{d}}^{\delta}(s,u) - \tilde{\mathcal{C}_{d}}(s,u)\right| \;duds \nonumber\\
		& \quad + \int_{0}^{t} \int_{\mathbb{R}} \zeta(u) \left| \tilde{\mathcal{C}_{d}}(s,u) \right| \left| \Lambda_{\epsilon}'(\tilde{e}^{\delta}(s,u)) - \Lambda_{\epsilon}'(\tilde{e}(s,u)) \right| \;duds.\label{coagepsilon-1}
	\end{align}
	First, utilizing~\eqref{sigmapointwiseconvergence}, we obtain  
	\begin{equation*}  
		\int_0^t \int_{\mathbb{R}} \zeta(u) \left| \Lambda_{\epsilon}'(\tilde{e}^{\delta}(s,u)) \right| \left| \tilde{\mathcal{C}_{d}}^{\delta}(s,u) - \tilde{\mathcal{C}_{d}}(s,u)\right| \;duds  
		\leq 2 \int_{0}^{t} \int_{\mathbb{R}} \zeta(u) \left| \tilde{\mathcal{C}_{d}}^{\delta}(s,u) - \tilde{\mathcal{C}_{d}}(s,u)\right| \;duds.  
	\end{equation*}  
By applying~\Cref{lem.convolution} along with Lebesgue's DCT and~\Cref{lem.Swd}, we conclude that  
	\begin{equation}  
		\lim_{\delta\to 0} \int_{0}^{t} \int_{\mathbb{R}} \zeta(u) \left| \Lambda_{\epsilon}'(\tilde{e}^{\delta}(s,u)) \right| \left| \tilde{\mathcal{C}_{d}}^{\delta}(s,u) - \tilde{\mathcal{C}_{d}}(s,u)\right| \;duds = 0.  \label{coagepsilon-2}  
	\end{equation}  
Moreover, since $\zeta \tilde{\mathcal{C}_{d}}$ belongs to $L^1((0,T)\times\mathbb{R})$ due to~\Cref{lem.Swd}, and considering the Lipschitz continuity of $\Lambda_\epsilon'$, \Cref{lem.convolution}, and~\eqref{sigmapointwiseconvergence}, we can find a sub-sequence (not relabeled),  
	\begin{equation*}  
		\lim_{\delta\to 0} \left| \Lambda_{\epsilon}'(\tilde{e}^{\delta}(s,u)) - \Lambda_{\epsilon}'(\tilde{e}(s,u)) \right| = 0 \quad \text{for almost every } (s,u) \in (0,t)\times\mathbb{R},  
	\end{equation*}  
	with the uniform bound  
	\begin{equation*}  
		\left| \Lambda_{\epsilon}'(\tilde{e}^{\delta}(s,u)) - \Lambda_{\epsilon}'(\tilde{e}(s,u)) \right| \leq 4 \quad \text{for almost every } (s,u) \in (0,t)\times\mathbb{R}.  
	\end{equation*}  
At this point, we can apply Lebesgue’s DCT, leading to
\begin{equation}
	\lim_{\delta\to 0} \int_{0}^{t} \int_{\mathbb{R}} \zeta(u) \left| \tilde{\mathcal{C}_{d}}(s,u) \right| \left| \Lambda_{\epsilon}'(\tilde{e}^{\delta}(s,u)) - \Lambda_{\epsilon}'(\tilde{e}(s,u)) \right| \;duds = 0. \label{coagepsilon-3}
\end{equation}
By combining~\eqref{coagepsilon-1}, \eqref{coagepsilon-2}, and~\eqref{coagepsilon-3}, we obtain
\begin{align}
	\lim_{\delta\to 0} \int_{0}^{t} \int_{\mathbb{R}} \zeta(u) \Lambda_{\epsilon}'(\tilde{e}^{\delta}(s,u)) \tilde{\mathcal{C}_{d}}^{\delta}(s,u) \;duds &= \int_{0}^{t} \int_{\mathbb{R}} \zeta(u) \Lambda_{\epsilon}'(\tilde{e}(s,u)) \tilde{\mathcal{C}_{d}}(s,u) \;duds \nonumber \\
	&= \int_{0}^{t} \int_{0}^{\infty} \zeta(u) \Lambda_{\epsilon}'(e(s,u)) \mathcal{C}_{d}(s,u) \;duds. \label{coagepsilon-4}
\end{align}
Similarly, by using \Cref{lem.Swd}, \Cref{lem.convolution}, the Lipschitz continuity of $\Lambda_\epsilon'$, \eqref{sigmapointwiseconvergence}, and Lebesgue's DCT, we can show that
 \begin{align}
 	\lim_{\delta\to 0} \int_{0}^{t} \int_{\mathbb{R}} \zeta(u) \Lambda_{\epsilon}'(\tilde{e}^{\delta}(s,u)) \tilde{\mathcal{F}_{d}}^{\delta}(s,u) \;duds & = \int_{0}^{t} \int_{\mathbb{R}} \zeta(u) \Lambda_{\epsilon}'(\tilde{e}(s,u)) \tilde{\mathcal{F}_{d}}(s,u) \;duds \nonumber \\
 	& = \int_{0}^{t} \int_{0}^{\infty} \zeta(u) \Lambda_{\epsilon}'(e(s,u)) \mathcal{F}_{d}(s,u) \;duds. \label{coagepsilon-5}
 \end{align}
Using~\eqref{dellim-3}, \eqref{coagepsilon-4}, and~\eqref{coagepsilon-5}, we can take the limit as \(\delta \to 0\) in~\eqref{epdelmain-2}, yielding  
	\begin{equation}
		\begin{split}
			\int_{0}^{\infty} \zeta(u) \Lambda_\epsilon(e(t,u)) \;du & \le \int_{0}^\infty \zeta(u) \left| e_{0}(u) \right| \;du + \int_{0}^{t} \int_{0}^\infty \zeta(u) \left[\mathcal{C}_{d}(s,u)+\mathcal{F}_{d}(s,u)\right] \Lambda_{\epsilon}^{\prime}(e(s,u)) \;duds \\
			& \quad + L_0 \int_0^t \int_{0}^\infty \zeta(u) \left| e(s,u) \right| \;duds,
		\end{split} \label{epdelmain-3}
	\end{equation}
for every $t\in [0,T]$. Now, we proceed to pass the limit as $\epsilon \to 0$ in~\eqref{epdelmain-3}. Since $e$ belongs to $L^\infty\left(0,T;L^{1}\left([0, \infty); (1+u^{2})du\right)\right)$, by \Cref{lem.Swd}, we can apply~\eqref{sigmapointwiseconvergence} along with Lebesgue's DCT to establish the following limits  
	\begin{align*}
		\lim_{\epsilon\to 0} \int_0^\infty \zeta(u) \Lambda_\epsilon(e(t,u)) \;du & = \int_{0}^{\infty} \zeta(u) |e(t,u)| \;du, \\
		\lim_{\epsilon\to 0} \int_{0}^{\infty} \zeta(u) \Lambda_\epsilon(e_{0}(u)) \;du & = \int_{0}^{\infty} \zeta(u) |e_{0}(u)| \;du, \\
		\lim_{\epsilon\to 0} \int_{0}^{t} \int_{0}^{\infty} \zeta(u) \Lambda_\epsilon(e(s,u)) \;duds & = \int_{0}^{t} \int_{0}^{\infty} \zeta(u) |e(s,u)| \;duds. \\
	\end{align*}
	Furthermore, we recall the property 
	\begin{equation*}
		\lim_{\epsilon\to 0} \zeta(u) \left[\mathcal{C}_{d}(s,u)+ \mathcal{F}_{d}(s,u)\right] \Lambda_\epsilon'(e(s,u)) = \zeta(u) \left[\mathcal{C}_{d}(s,u)+ \mathcal{F}_{d}(s,u)\right] \,\mathrm{sign}(e(s,u)),
	\end{equation*}
	which holds for almost every $(s,u) \in (0,t) \times (0,\infty)$, with the bound
	\begin{equation*}
		\left| \zeta(u) \left[\mathcal{C}_{d}(s,u)+ \mathcal{F}_{d}(s,u)\right] \Lambda_\epsilon'(e(s,u)) \right| \le 2 \zeta(u) \left[|\mathcal{C}_{d}(s,u)|+|\mathcal{F}_{d}(s,u)|\right],
	\end{equation*}
following from~\eqref{sigmapointwiseconvergence}.  Therefore, applying \Cref{lem.Swd} and Lebesgue's DCT, we obtain the convergence
	\begin{align*}
		&\lim_{\epsilon \to 0} \int_{0}^{t} \int_{0}^\infty \zeta(u) \left[\mathcal{C}_{d}(s,u)+\mathcal{F}_{d}(s,u)\right] \Lambda_{\epsilon}(e(s,u)) \;duds \\
		&= \int_{0}^{t} \int_{0}^\infty \zeta(u) \left[\mathcal{C}_{d}(s,u)+\mathcal{F}_{d}(s,u)\right] \,\mathrm{sign}(e(s,u)) \;duds.
	\end{align*}
Consequently, taking the limit as $\epsilon \to 0$ in~\eqref{epdelmain-3} completes the proof of~\Cref{lem.diffineq}.  
\end{proof}
By virtue of \Cref{lem.diffineq}, to complete the proof of \Cref{continuousdependence result}, it remains to estimate the coagulation and fragmentation terms. We follow the classical approach, as presented in \cite[Section~8.2.5]{BLL_book} and \cite{giri2013uniqueness}, which was later adapted in \cite[Proposition 5.1]{Giri2025well}.
\begin{proof}[Proof of \Cref{continuousdependence result}]
	Fix $T>0$ and $t\in [0,T]$. First, we consider the coagulation part present in the right-hand side of~\eqref{z001}
	\begin{align*}
		&\int_{0}^{t}\int_{0}^{\infty} \zeta(u)\, \mathrm{sign}(e(s,u)) \mathcal{C}_{d}(s,u) \;duds\nonumber\\
		& \quad = \frac{1}{2} \int_{0}^{t}\int_{0}^{\infty}\int_{0}^{\infty} \Upsilon(u,u_{1}) W(s,u,u_{1}) (\xi_1+\xi_2)(s,u) e(s,u_{1}) \;du_{1}duds,
	\end{align*}
	where
	\begin{align*}
		W(s,u,u_{1}) & = \zeta(u+u_{1})\, \mathrm{sign}(e(s,u+u_{1})) - \zeta(u) \,\mathrm{sign}(e(s,u)) - \zeta(u_{1}) \,\mathrm{sign}(e(s,u_{1})). 
	\end{align*}
Since
	\begin{align*}
		W(s,u,u_{1}) e(s,u_{1}) &\leq \left( 1 + u + u_{1}\right) |e(s,u_{1})| + \left(1+u\right) |e(s,u_{1})| -  \left(1+u_{1}\right) |e(s,u_{1})| \\
		&=\left(1+2u\right) |e(s,u_{1})|,
	\end{align*}
	as a result, using \eqref{multiplicativecoagassum}, we deduce the following estimate
\begin{align}
		&\int_{0}^{t}\int_{0}^{\infty} \zeta(u)\, \mathrm{sign}(e(s,u)) \mathcal{C}_{d}(s,u) \;duds\nonumber\\
		& \quad \leq \frac{1}{2}\int_{0}^{t}\int_{0}^{\infty}\int_{0}^{\infty} \Upsilon(u,u_{1}) \left(1+2u\right) (\xi_1+\xi_2)(s,u) |e(s,u_{1})| \;du_{1}duds \nonumber\\
		& \quad \leq \frac{\Upsilon_{0}}{2} \int_{0}^{t}\int_{0}^{\infty}\int_{0}^{\infty} (1+u)(1+u_{1}) \left(1+2u\right) (\xi_1+\xi_2)(s,u) |e(s,u_{1})| \;du_{1}duds\nonumber\\
		&\quad \leq \frac{\Upsilon_{0}}{2} \int_0^t  \left[ M_{0}((\xi_1+\xi_2)(s)) + 3M_{1}((\xi_1+\xi_2)(s)) + 2M_2((\xi_1+\xi_2)(s)) \right] \int_{0}^{\infty} \zeta(u) |e(t,u)| \;duds,\label{Contributionbycoag}
\end{align}
for each $t\in [0,T]$. Next, we estimate the fragmentation term appearing in the last expression on the right-hand side of~\eqref{z001}. Using the properties of sign function, \eqref{particlebound}, \eqref{lmassconservation}, \eqref{alphaassum}, and Fubini's theorem, we infer that 
\begin{align}
	&\int_{0}^{t}\int_{0}^{\infty} \zeta(u)\, \mathrm{sign}(e(s,u)) \mathcal{F}_{d}(s,u) \;duds\nonumber\\
	& \quad = -\int_{0}^{t}\int_{0}^{\infty} \zeta(u)\alpha(u)|e(s,u)|\;duds + \int_{0}^{t}\int_{0}^{\infty}\zeta(u)\, \mathrm{sign}(e(s,u))\left(\int_{u}^{\infty}\alpha(u_{1})\beta(u|u_{1})e(s,u_{1})\;du_{1}\right) \;duds\nonumber\\
	&\quad\leq -\int_{0}^{t}\int_{0}^{\infty} \zeta(u)\alpha(u)|e(s,u)|\;duds +  \int_{0}^{t}\int_{0}^{\infty} \alpha(u)\left(n(u)+\zeta(u)-1\right)|e(s,u)|\;duds\nonumber\\
	&\quad\leq \max\left\{P_{\alpha}, Q_{\alpha}\right\}(M-1)\int_{0}^{t}\int_{0}^{\infty} \zeta(u)|e(s,u)|\;duds,\label{Contributionbyfrag}
\end{align}
for each $t\in [0,T]$. By inserting the previously derived inequalities \eqref{Contributionbycoag} and \eqref{Contributionbyfrag} into~\eqref{z001}, we obtain 
\begin{equation*}  
	\nu(t) \leq \nu(0) + R_{0} \int_{0}^{t} \left[ 1+ M_{0}((\xi_1+\xi_2)(s)) + 3M_{1}((\xi_1+\xi_2)(s)) + 2M_2((\xi_1+\xi_2)(s)) \right] \nu(s) \, ds,  
\end{equation*}  
for each $t \in [0,T]$ with $R_{0} := L_{0}+\Upsilon_{0}/2 + \max\left\{P_{\alpha}, Q_{\alpha}\right\}(M-1)$. Finally, using~\eqref{z014} along with Gronwall’s lemma, we arrive at the desired conclusion, thereby completing the proof of \Cref{continuousdependence result}. 
\end{proof}

\begin{proof}[Proof of \Cref{uniqueness theorem}]   
	Consider two weak solutions $\xi_1$ and $\xi_2$ of \eqref{maineq.1}--\eqref{maineq.2}, as defined in~\Cref{definitionofsolution}, corresponding to the initial data $\xi_{1}^{{\mathrm{in}}}$ and $\xi_{2}^{{\mathrm{in}}}$, respectively. Suppose that $\xi_{1}^{{\mathrm{in}}} = \xi_{2}^{{\mathrm{in}}}$ almost everywhere in $(0, \infty)$. Then, applying \Cref{continuousdependence result}, we obtain  
	\begin{equation*}  
		\int_{0}^{\infty} \left( 1 + u\right) |\xi_{1}(t,u) - \xi_{2}(t,u)| \,du = 0,  
	\end{equation*}  
	which directly implies that $\xi_1(t) = \xi_2(t)$ for all $t \in [0,T]$, thereby completing the proof of \Cref{uniqueness theorem}.  
\end{proof}

\section{Large time behaviour}
\noindent We set $\xi^{{\mathrm{in}}}\in Y_{+}$ with $\xi^{{\mathrm{in}}}\not\equiv 0$ and denote by $\xi$ a weak solution to \eqref{maineq.1}--\eqref{maineq.2}, as defined in \Cref{definitionofsolution}, throughout this section. In the absence of fragmentation, the growth-coagulation equations \eqref{maineq.1}--\eqref{maineq.2} govern only the processes of coagulation, growth, birth, and death. Consequently, both the total number of particles, $M_{0}(\xi(\cdot))$, and the total mass, $M_{1}(\xi(\cdot))$, in the system are expected to decrease over time due to the condition \eqref{deathdominated}. This leads to the following result motivated by~\cite[Proposition 4.1]{lachowicz2003oort}.
\begin{proposition}\label{monotonicdecresingmoments}
	Suppose that $\Upsilon$ and $\mu$ satisfy \eqref{multiplicativecoagassum} and \eqref{mu}, respectively. Assume that $\alpha \equiv 0$ and $\beta \equiv 0$, and let $g$, $a$, and $\xi^{{\mathrm{in}}}$ be parameters satisfying \eqref{growthassum}--\eqref{initialassum}. Moreover, suppose that \eqref{deathdominated} holds. Consider $\xi$ as a weak solution to \eqref{maineq.1}--\eqref{maineq.2} as defined in \Cref{definitionofsolution}. Then, for $m=0, 1$ and $t_{2}\ge t_{1}\ge 0$, we have 
	\begin{align}\label{monotonicityofmomentsintime}
		\int_{0}^{\infty}u^{m}\xi(t_{2},u)\;du \leq 	\int_{0}^{\infty}u^{m}\xi(t_{1},u)\;du. 
	\end{align}
\end{proposition}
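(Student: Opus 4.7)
The plan is to plug into the weak formulation \eqref{weak formulation} a family of test functions $\vartheta\in C_c^1([0,\infty))$ that vanish at $u=0$, killing the renewal contribution $\vartheta(0)\int_0^\infty a\xi\,du$, and then pass to a limit where $\vartheta$ approaches $u^m$. Since $\alpha\equiv 0$ the fragmentation terms drop as well, and the surviving growth, death, and coagulation terms will combine to yield a non-positive right-hand side under \eqref{deathdominated}.

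For $m=1$, I will take $\vartheta_R(u):=u\,\phi_R(u)$, with $\phi_R\in C_c^1([0,\infty))$ equal to $1$ on $[0,R]$, vanishing on $[2R,\infty)$, decreasing with $|\phi_R'|\le C/R$, and chosen so that $\vartheta_R$ is concave on $[0,2R]$. Then $\vartheta_R(0)=0$ eliminates the birth term, while concavity together with $\vartheta_R(0)=0$ makes $\vartheta_R$ sub-additive, so $\tilde{\vartheta}_R(u,u_1)\le 0$; the coagulation integral is then non-positive and can be dropped to produce an inequality. Letting $R\to\infty$, DCT applied to the growth term (dominated by $g\xi$, integrable thanks to \eqref{growthassum} and $\xi\in L^\infty(0,T;Y_+)$) gives $\int_0^\infty g\xi\,du$, and MCT applied to the death term (using $\vartheta_R\nearrow u$) gives $\int_0^\infty u\mu\xi\,du$. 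Assumption \eqref{deathdominated} then furnishes
\begin{align*}
M_1(\xi(t_2))-M_1(\xi(t_1))\le \int_{t_1}^{t_2}\!\!\int_0^\infty\bigl(g(u)-u\mu(u)\bigr)\xi(s,u)\,du\,ds\le 0,
\end{align*}
with $\int u\mu\xi\,du<\infty$ a byproduct since $M_1\ge 0$.

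For $m=0$, I will take $\vartheta_{\epsilon,R}(u):=\chi_\epsilon(u)\phi_R(u)$, where $\chi_\epsilon\in C^1([0,\infty))$ obeys $\chi_\epsilon(0)=0$, $0\le \chi_\epsilon\le 1$, $\chi_\epsilon\equiv 1$ on $[\epsilon,\infty)$, and $|\chi_\epsilon'|\le 2/\epsilon$. Again $\vartheta_{\epsilon,R}(0)=0$ removes the birth term. In the limit $\epsilon\to 0$, $R\to\infty$ the death term converges to $\int_0^\infty\mu\xi\,du$ by DCT (dominated by $\mu\xi$), and the coagulation integrand $\tilde{\vartheta}_{\epsilon,R}\,\Upsilon\xi\xi$ converges pointwise to $-\Upsilon\xi\xi$ with the uniform bound $3\Upsilon_0(1+u)(1+u_1)\xi(s,u)\xi(s,u_1)$, giving $-\tfrac{1}{2}\int\!\int\Upsilon\xi\xi$ by DCT. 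The decisive estimate is the growth term: using $g(u)\le u\mu(u)\le u(P_\mu u+Q_\mu)$ together with $u\le\epsilon$ on $\mathrm{supp}(\chi_\epsilon')$,
\begin{align*}
\int_0^\epsilon |\chi_\epsilon'(u)|\,g(u)\,\xi(s,u)\,du\le \tfrac{2}{\epsilon}\cdot \epsilon\,(P_\mu\epsilon+Q_\mu)\int_0^\epsilon\xi(s,u)\,du,
\end{align*}
which tends to $0$ as $\epsilon\to 0$ by absolute continuity of the $L^1$-integral; integrating in $s\in[t_1,t_2]$ and applying DCT (dominating constant $2(P_\mu+Q_\mu)\,\|\xi\|_{L^\infty_tL^1_u}$) kills the time-integrated inner contribution, while the outer-cutoff piece on $[R,2R]$ vanishes as $R\to\infty$ by the finite first-moment bound. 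This yields
\begin{align*}
M_0(\xi(t_2))-M_0(\xi(t_1))=-\!\int_{t_1}^{t_2}\!\!\int_0^\infty\!\mu\xi\,du\,ds-\tfrac{1}{2}\int_{t_1}^{t_2}\!\!\int_0^\infty\!\int_0^\infty\Upsilon\xi\xi\,du\,du_1\,ds\le 0.
\end{align*}

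The main obstacle is the growth--boundary interplay in the $m=0$ case: since $\chi_\epsilon'$ scales as $1/\epsilon$ near $u=0$, only the linear decay $g(u)=O(u)$ at the origin (forced by \eqref{deathdominated} via the local boundedness $\mu(0)\le Q_\mu$) can tame this singular weight after using $u\le \epsilon$, with absolute continuity of $\xi\in L^1(du)$ supplying the remaining smallness. Thus \eqref{deathdominated} plays a dual role: it directly furnishes the sign in the $m=1$ bound, and it indirectly neutralizes the renewal birth contribution in the $m=0$ bound.
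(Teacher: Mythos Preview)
Your strategy matches the paper's: both choose test functions vanishing at $u=0$ to eliminate the renewal term, use sub-additivity to render the coagulation contribution non-positive, and invoke \eqref{deathdominated} to control the growth--death balance. The paper works with the piecewise-linear $W^{1,\infty}$ test functions $\min\{u,K\}$ (for $m=1$) and $\min\{u/\epsilon,1\}$ (for $m=0$), whereas you use $C_c^1$ cutoffs; the limiting arguments are otherwise the same.

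One step in your $m=1$ case fails as written: there is no non-trivial $\vartheta_R\in C_c^1([0,\infty))$ with $\vartheta_R(0)=0$ that is concave on $[0,2R]$. Indeed, $C^1$-matching with the zero extension forces $\vartheta_R'(2R)=0$; concavity then makes $\vartheta_R'$ non-increasing, hence $\vartheta_R'\ge 0$ on $[0,2R]$, so $\vartheta_R$ is non-decreasing there, and $\vartheta_R(0)=\vartheta_R(2R)=0$ forces $\vartheta_R\equiv 0$. However, concavity is unnecessary for your purpose: since $\phi_R$ is non-increasing, for all $u,u_1\ge 0$ one has
\[
(u+u_1)\phi_R(u+u_1)\le u\,\phi_R(u+u_1)+u_1\,\phi_R(u+u_1)\le u\,\phi_R(u)+u_1\,\phi_R(u_1),
\]
so $\vartheta_R(u)=u\phi_R(u)$ is sub-additive and $\tilde\vartheta_R\le 0$ holds regardless. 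With this correction the argument goes through.
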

\begin{proof}
We set $ \vartheta(u) = \min \{u, K\}$ for $u\in [0,\infty)$ and $K>0$. Inserting $\vartheta$ in \eqref{weak formulation} at $t=t_1$ and $t=t_2$, with $\alpha \equiv 0$ and $\beta \equiv 0$, we derive the following inequality 
\begin{align}\label{LargeM-3} 
	\int_{0}^{K} u \xi(t_{2},u)\;du &\leq \int_{0}^{\infty} u \xi(t_{1},u)\;du   
	+\int_{t_{1}}^{t_{2}}\int_{0}^{K} \left(g(u)-u\mu(u)\right)\xi(s,u)\;duds \nonumber \\  
	&\quad +\frac{1}{2}\int_{t_{1}}^{t_{2}}\int_{0}^{\infty} \int_{0}^{\infty} \tilde{\vartheta}(u,u_{1}) \Upsilon(u,u_{1})\xi(s,u) \xi(s,u_{1})\;du du_{1}ds. 
\end{align}  
Using the conditions  
\begin{align*}  
	\begin{cases}  
		\tilde{\vartheta}(u,u_{1})=-K &\quad \text{for } u,u_{1}\geq K, \\  
		\tilde{\vartheta}(u,u_{1})\leq 0 &\quad \text{otherwise},  
	\end{cases}  
\end{align*} 
along with \eqref{deathdominated} in \eqref{LargeM-3}, we obtain  
\begin{align}\label{LargeM-4}  
	\int_{0}^{K} u \xi(t_{2},u)\;du &\leq \int_{0}^{\infty} u \xi(t_{1},u)\;du  -\frac{K}{2}\int_{t_{1}}^{t_{2}}\int_{K}^{\infty} \int_{K}^{\infty} \Upsilon(u,u_{1})\xi(s,u) \xi(s,u_{1})\;du du_{1}ds.  
\end{align}
Next, by utilizing the non-negativity of $K$, $\Upsilon$, and $\xi$, we infer that
\begin{align}
	\int_{0}^{K} u\xi(t_{2},u)\;du  &\leq   	\int_{0}^{\infty} u\xi(t_{1},u)\;du.\nonumber
\end{align}
Taking the limit as $K \rightarrow \infty$ and applying Lebesgue's DCT, we obtain the stated result~\eqref{monotonicityofmomentsintime} for $m=1$. Next, to check the validity of \eqref{monotonicityofmomentsintime} for $m=0$, we consider
\begin{align*}
	\vartheta(u)=\vartheta_{\epsilon}(u)=\begin{cases}
		\frac{u}{\epsilon},\quad &u\in [0,\epsilon),\\
		1, \quad &u\in [\epsilon, \infty), 
	\end{cases}
\end{align*} 
in~\eqref{weak formulation} with $t=t_{1}$ and $t=t_{2}$. Since $\alpha \equiv 0$ and $\beta \equiv 0$, by subtracting the resulting expressions, we obtain
\begin{align*}
	\int_{0}^{\infty} \xi(t_{2},u)\vartheta_{\epsilon}(u)\;du  &=   	\int_{0}^{\infty} \xi(t_{1},u)\vartheta_{\epsilon}(u)\;du   
	+\int_{t_{1}}^{t_{2}}\int_{0}^{\infty} \vartheta_{\epsilon}^{\prime}(u)g(u)\xi(s,u)\;duds\nonumber \\ &+\vartheta_{\epsilon}(0)\int_{t_{1}}^{t_{2}}\int_{0}^{\infty} a(u)\xi(s,u)\;duds-\int_{t_{1}}^{t_{2}}\int_{0}^{\infty} \mu(u)\vartheta_{\epsilon}(u)\xi(s,u)\;duds \nonumber \\
	&+\frac{1}{2}\int_{t_{1}}^{t_{2}}\int_{0}^{\infty} \int_{0}^{\infty}\tilde{\vartheta_{\epsilon}}(u,u_{1}) \Upsilon(u,u_{1})\xi(s,u) \xi(s,u_{1})\;du du_{1}ds.
\end{align*}
 Using $\vartheta_{\epsilon}(0)=0$ and \eqref{deathdominated}, we deduce the following inequality
\begin{align}\label{Largezeromomentvanishing}
	\int_{0}^{\infty} \xi(t_{2},u)\vartheta_{\epsilon}(u)\;du  &\leq    	\int_{0}^{\infty} \xi(t_{1},u)\vartheta_{\epsilon}(u)\;du   +\int_{t_{1}}^{t_{2}}\int_{0}^{\epsilon} \mu(u)\xi(s,u)\;duds\nonumber\\
	&+\frac{1}{2}\int_{t_{1}}^{t_{2}}\int_{0}^{\infty} \int_{0}^{\infty}\tilde{\vartheta_{\epsilon}}(u,u_{1}) \Upsilon(u,u_{1})\xi(s,u) \xi(s,u_{1})\;du du_{1}ds.
\end{align}
Observe that the mapping  
$s \mapsto \int_{0}^{\infty} \int_{0}^{\infty} \Upsilon(u,u_{1})\xi(s,u) \xi(s,u_{1}) \,du \,du_{1}$
belongs to $L^{1}(t_{1}, t_{2})$. Moreover, $ |\tilde{\vartheta_{\epsilon}}(u,u_{1})| \leq 3$, $ \lim_{\epsilon\to 0}\vartheta_{\epsilon}(u) = 1$, and $|\vartheta_{\epsilon}(u)| \leq 1$ for $u \in (0,\infty)$. Additionally, we have $\xi \in L^{\infty}(0,T; Y_{+})$ for any $T>0$ and $\mu \xi \in L^{1}([t_{1}, t_{2}]; L^{1}([0,\infty)))$ by \Cref{definitionofsolution} and \eqref{mu}. Hence, by taking the limit as $ \epsilon \to 0$ in \eqref{Largezeromomentvanishing} and applying Lebesgue's DCT, we obtain  
\begin{align}\label{Largezeromomentvanishing-1}
	\int_{0}^{\infty} \xi(t_{2},u)\;du  \leq \int_{0}^{\infty} \xi(t_{1},u)\;du -\frac{1}{2} \int_{t_{1}}^{t_{2}} \int_{0}^{\infty} \int_{0}^{\infty} \Upsilon(u,u_{1})\xi(s,u) \xi(s,u_{1})\;dudu_{1}ds.
\end{align}
This confirms that the stated result~\eqref{monotonicityofmomentsintime} also holds for $m=0$, i.e.,
\begin{align}
	\int_{0}^{\infty} \xi(t_{2},u)\;du  &\leq    	\int_{0}^{\infty} \xi(t_{1},u)\;du.\nonumber
\end{align}
Hence, the proof of \Cref{monotonicdecresingmoments} is complete. 
\end{proof} 
We next proceed to prove \Cref{Largetimebehavior}.
\begin{proof}[\textbf{Proof of \Cref{Largetimebehavior}(a)}]
 For $\mathcal{V}>0$, $\epsilon \in (0,1)\cap (0, \mathcal{V})$, and $t\ge 0$, we define 
\begin{align*}
	G(t,\mathcal{V})=\int_{0}^{\mathcal{V}}\xi(t,u) \;du
\end{align*} 
and 
\begin{align}
\vartheta_{\epsilon}(u)=\begin{cases}
	\frac{u}{\epsilon} &\quad\text{ for }\;0\leq u<\epsilon,\\
	    1 &\quad \text{ for } \epsilon\leq u\leq \mathcal{V},\\
	    \frac{\mathcal{V}+\epsilon-u}{\epsilon}&\quad \text{ for } \mathcal{V}< u\leq \mathcal{V}+\epsilon,\\
		0&\quad \text{ for } u> \mathcal{V}+\epsilon.
	\end{cases}
\end{align}
Then we have $\vartheta_{\epsilon}(0)=0$, $\vartheta_{\epsilon}\in W^{1,\infty}([0, \infty))$, $\vartheta_{\epsilon}^{\prime}(u)\leq 0$ for $u\in (\epsilon, \infty)$, and $|\vartheta_{\epsilon}(u)|\leq 1$ for $u\in (0,\infty)$. Since $\alpha \equiv 0$ and $\beta \equiv 0$, using \eqref{weak formulation} at $t=t_{1}$ and $t=t_{2}$ with $\vartheta=\vartheta_{\epsilon}$, we deduce that 
\begin{align*}
	\int_{0}^{\infty} \xi(t_{2},u)\vartheta_{\epsilon}(u)\;du  &\leq    	\int_{0}^{\infty} \xi(t_{1},u)\vartheta_{\epsilon}(u)\;du
	+\frac{1}{\epsilon}\int_{t_{1}}^{t_{2}}\int_{0}^{\epsilon} g(u)\xi(s,u)\;duds\nonumber \\ 
	&\quad +\frac{1}{2}\int_{t_{1}}^{t_{2}}\int_{0}^{\infty} \int_{0}^{\infty}\tilde{\vartheta_{\epsilon}}(u,u_{1}) \Upsilon(u,u_{1})\xi(s,u) \xi(s,u_{1})\;du du_{1}ds,
\end{align*}
for $t_{2}\ge t_{1}\ge 0$. Next, by using \eqref{deathdominated}, we infer that 
\begin{align}\label{Large-1}
	\int_{0}^{\infty} \xi(t_{2},u)\vartheta_{\epsilon}(u)\;du  &\leq    	\int_{0}^{\infty} \xi(t_{1},u)\vartheta_{\epsilon}(u)\;du   +\int_{t_{1}}^{t_{2}}\int_{0}^{\epsilon} \mu(u)\xi(s,u)\;duds\nonumber \\ 
	&\quad +\frac{1}{2}\int_{t_{1}}^{t_{2}}\int_{0}^{\infty} \int_{0}^{\infty}\tilde{\vartheta_{\epsilon}}(u,u_{1}) \Upsilon(u,u_{1})\xi(s,u) \xi(s,u_{1})\;du du_{1}ds.
\end{align}
Note that the mapping $s\mapsto \int_{0}^{\infty} \int_{0}^{\infty} \Upsilon(u,u_{1})\xi(s,u) \xi(s,u_{1})\;du du_{1}$ is in $L^{1}(t_{1}, t_{2})$ with $|\tilde{\vartheta_{\epsilon}}(u,u_{1})|\leq 3$, $\lim_{\epsilon\to 0}\vartheta_{\epsilon}(u)=\chi_{(0,\mathcal{V}]}(u)$, $|\vartheta_{\epsilon}(u)|\leq 1$, $u\in (0,\infty)$. Furthermore, we have $\xi \in L^{\infty}(0,T; Y_{+})$ for any $T>0$ and $\mu \xi\in L^{1}([t_{1}, t_{2}); L^{1}([0,\infty)))$ by \Cref{definitionofsolution} and \eqref{mu}. Therefore, by taking the limit as $\epsilon\to 0$ in \eqref{Large-1} and applying Lebesgue's DCT, we obtain   
\begin{align}\label{Large-2}
	G(t_{2},\mathcal{V})-G(t_{1},\mathcal{V})\leq -\frac{1}{2}\int_{t_{1}}^{t_{2}}\int_{0}^{\mathcal{V}} \int_{0}^{\mathcal{V}} \Upsilon(u,u_{1})\xi(s,u) \xi(s,u_{1})\;du du_{1}ds.
\end{align}
A direct consequence of \eqref{Large-2} is that $G(\cdot, \mathcal{V})$ remains non-negative and decreasing over time. Consequently, there exists a limit $G(\mathcal{V}) \geq 0$ such that  
\begin{align}\label{Large-3}  
	\lim_{t \to \infty} G(t,\mathcal{V}) = G(\mathcal{V}).  
\end{align}  
Additionally, from \eqref{Large-2} and the lower bound in \eqref{coaglowerbound}, we deduce that for any $\theta \in (0, \mathcal{V})$, the following holds:  
\begin{align*}  
	\int_{0}^{t} \left(\int_{\theta}^{\mathcal{V}} \xi(s,u) \,du \right)^{2}\,ds &\leq \frac{1}{\delta_{\theta}} \int_{0}^{t} \int_{0}^{\mathcal{V}} \int_{0}^{\mathcal{V}} \Upsilon(u,u_{1}) \xi(s,u) \xi(s,u_{1}) \,du \,du_{1} \,ds \\  
	&\leq \frac{2}{\delta_{\theta}} G(0, \mathcal{V}).  
\end{align*}  
This implies that  
\begin{align}\label{Large-4}  
	t \mapsto G(t,\mathcal{V}) - G(t,\theta) \in L^{2}(0,\infty).  
\end{align}  
From \eqref{Large-3}, \eqref{Large-4}, and the fact that $G(\cdot, \theta)$ is non-increasing with respect to time, we conclude that  
\begin{align*}  
	0 \leq G(\mathcal{V}) = G(\theta) \leq G(0, \theta), \quad \theta \in (0,\mathcal{V}).  
\end{align*}  
Since the initial data $\xi^{{\mathrm{in}}}$ belongs to $L^{1}([0,\infty))$, taking the limit as $\theta \to 0$ in the above expression leads to $G(\mathcal{V}) = 0$ for all $\mathcal{V}> 0$. Furthermore, by \eqref{monotonicityofmomentsintime}, we observe that  
\begin{align*}  
	M_{0}(\xi(t)) \leq G(t, \mathcal{V}) + \frac{1}{\mathcal{V}} \int_{\mathcal{V}}^{\infty} u \xi(t,u) \,du \leq G(t,\mathcal{V}) + \frac{1}{\mathcal{V}} \int_{0}^{\infty} u \xi^{{\mathrm{in}}}(u) \,du,  
\end{align*}  
for any $t>0$. Taking the limit as $t \to \infty$ and subsequently $\mathcal{V} \to \infty$ in the above inequality establishes \eqref{vanishingzerothmoment}, thereby completing the proof of \Cref{Largetimebehavior}(a).    	
\end{proof}
We now proceed to prove \Cref{Largetimebehavior}(b), beginning with a key intermediate result.
\begin{lemma}\label{LargeM-0}
	If all the hypotheses made in \Cref{monotonicdecresingmoments} are satisfied, then for $t_{2}\ge t_{1}\ge 0$, we have 
	\begin{align}\label{LargeM-1}
		\int_{t_{1}}^{t_{2}}\int_{0}^{\infty}\int_{0}^{\infty}\Upsilon(u,u_{1})\xi(s,u) \xi(s,u_{1})\;dudu_{1}ds \leq 2\int_{0}^{\infty}\xi(t_{1},u)\;du
	\end{align}
and 
\begin{align}\label{LargeM-2}
	\int_{t_{1}}^{t_{2}}\int_{K}^{\infty}\int_{K}^{\infty}\Upsilon(u,u_{1})\xi(s,u) \xi(s,u_{1})\;dudu_{1}ds \leq \frac{2}{K}\int_{0}^{\infty}u\xi(t_{1},u)\;du,
\end{align}
for any $K>0$.
\end{lemma}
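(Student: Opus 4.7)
The plan is to extract both estimates directly from intermediate inequalities already derived during the proof of Proposition \ref{monotonicdecresingmoments}. In that proof, the coagulation dissipation term appears explicitly on the right-hand side of two key inequalities and is then discarded via non-negativity of the left-hand side to conclude monotonicity of $M_{0}(\xi(\cdot))$ and $M_{1}(\xi(\cdot))$. Here I will instead retain the dissipation term and isolate it, so that essentially no new analytic input is required.

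First, to prove \eqref{LargeM-1}, I will start from \eqref{Largezeromomentvanishing-1}, which for arbitrary $t_{2}\ge t_{1}\ge 0$ reads
\begin{equation*}
\int_{0}^{\infty} \xi(t_{2},u)\;du  \leq \int_{0}^{\infty} \xi(t_{1},u)\;du - \frac{1}{2}\int_{t_{1}}^{t_{2}} \int_{0}^{\infty}\int_{0}^{\infty} \Upsilon(u,u_{1})\xi(s,u)\xi(s,u_{1})\;dudu_{1}ds.
\end{equation*}
Transposing the double coagulation integral to the left-hand side and then discarding the non-negative quantity $\int_{0}^{\infty} \xi(t_{2},u)\,du$ on the resulting right-hand side gives \eqref{LargeM-1} with the asserted constant $2$.

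Next, to prove \eqref{LargeM-2}, I will appeal to the analogous intermediate estimate \eqref{LargeM-4}, valid for every $K>0$ and $t_{2}\ge t_{1}\ge 0$,
\begin{equation*}
\int_{0}^{K} u\xi(t_{2},u)\;du \leq \int_{0}^{\infty} u\xi(t_{1},u)\;du - \frac{K}{2}\int_{t_{1}}^{t_{2}}\int_{K}^{\infty}\int_{K}^{\infty}\Upsilon(u,u_{1})\xi(s,u)\xi(s,u_{1})\;dudu_{1}ds.
\end{equation*}
Isolating the coagulation integral, dropping the non-negative term $\int_{0}^{K} u\xi(t_{2},u)\,du$, and dividing by $K/2 > 0$ yields \eqref{LargeM-2}.

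I do not expect any genuine obstacle. All of the analytic work—namely the choice of the test functions $\vartheta_{\epsilon}$ and $\min\{u,K\}$ in the weak formulation \eqref{weak formulation}, the use of the hypothesis \eqref{deathdominated} to control the growth contribution, and the passage to the limit via Lebesgue's Dominated Convergence Theorem—has already been carried out in the proof of Proposition \ref{monotonicdecresingmoments}. Finiteness of the right-hand sides of \eqref{LargeM-1} and \eqref{LargeM-2} is automatic from $\xi^{{\mathrm{in}}} \in Y_{+}$ together with \eqref{monotonicityofmomentsintime}, so the identities above may be rearranged without any integrability concerns.
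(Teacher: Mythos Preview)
Your proposal is correct and follows exactly the same approach as the paper, which simply states that \eqref{LargeM-1} and \eqref{LargeM-2} follow directly from \eqref{Largezeromomentvanishing-1} and \eqref{LargeM-4}, respectively. You have in fact supplied more detail than the paper does, spelling out the rearrangement and the use of non-negativity explicitly.
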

\begin{proof}
The inequalities \eqref{LargeM-1} and \eqref{LargeM-2} follow directly from \eqref{Largezeromomentvanishing-1} and \eqref{LargeM-4}, respectively.   
\end{proof}
Following the approach outlined in \cite[Theorem 1.1]{escobedo2002gelation} and \cite[Theorem 2.5]{lachowicz2003oort}, we now proceed to complete the proof of \Cref{Largetimebehavior}(b).
\begin{proof}[\textbf{Proof of \Cref{Largetimebehavior}(b)}]
First, let $\lambda \in (1,2)$ and define $\eta(u) = \left(u^{1-\lambda/2} - 1\right)_{+}$ for $u \in (0, \infty)$. Since $\lambda > 1$, it follows that  
\begin{align*}  
	\mathcal{N}:= \int_{0}^{\infty} u^{-1/2} \eta^{\prime}(u) \;du < \infty.  
\end{align*}  
For $t_{2} \geq t_{1} \geq 0$, applying Hölder’s inequality along with \eqref{coaglowerboundM} and \eqref{LargeM-2}, we establish the following bound 
\begin{align*}  
	&\int_{t_{1}}^{t_{2}}\left( \int_{0}^{\infty} \eta^{\prime}(K) \int_{K}^{\infty} u^{\lambda/2} \xi(s,u) \;du dK \right)^{2}\;ds \\  
	&\leq \mathcal{N} \int_{t_{1}}^{t_{2}} \int_{0}^{\infty} \eta^{\prime}(K) K^{1/2} \left( \int_{K}^{\infty} u^{\lambda/2} \xi(s,u) \;du \right)^{2} \;dK ds \\  
	&\leq \frac{\mathcal{N}}{\Upsilon_{2}} \int_{0}^{\infty} \eta^{\prime}(K) K^{1/2} \int_{t_{1}}^{t_{2}} \int_{K}^{\infty} \int_{K}^{\infty} \Upsilon(u,u_{1}) \xi(s,u) \xi(s,u_{1}) \;du du_{1} ds dK \\  
	&\leq \frac{2{\mathcal{N}}^{2}}{\Upsilon_{2}} M_{1}(\xi(t_{1})).  
\end{align*}  
Next, we observe that  
\begin{align*}  
	\int_{0}^{\infty} \eta^{\prime}(K) \int_{K}^{\infty} u^{\lambda/2} \xi(s,u) \;du dK &= \int_{0}^{\infty} u^{\lambda/2} \eta(u) \xi(s,u) \;du \\  
	&\geq \left(2^{\frac{2-\lambda}{2}} - 1\right) \int_{2}^{\infty} u \xi(s,u) \;du.  
\end{align*}  
Combining these inequalities, we obtain  
\begin{align}\label{TempLarge-1}  
	\int_{t_{1}}^{t_{2}} \left( \int_{2}^{\infty} u \xi(s,u) \;du \right)^{2} \;ds \leq C(\lambda,\Upsilon_{2}) M_{1}(\xi(t_{1})),  
\end{align}  
for some positive constant $C(\lambda,\Upsilon_{2}) $ that depends only on $\lambda$ and $\Upsilon_{2}$. Next, applying \eqref{coaglowerboundM} and \eqref{LargeM-1}, we obtain  
\begin{align}\label{TempLarge-2}  
	\int_{t_{1}}^{t_{2}} \left( \int_{0}^{2} u \xi(s,u)\;du \right)^{2} \;ds \leq C(\Upsilon_{2}) \int_{0}^{\infty} \xi(t_{1},u)\;du,  
\end{align}  
for some positive constant $C(\Upsilon_{2})$. Furthermore, using Young’s inequality, \eqref{monotonicityofmomentsintime}, \eqref{TempLarge-1}, and \eqref{TempLarge-2}, we infer that  
\begin{align}\label{TempLarge-3}  
	\int_{t}^{\infty} M_{1}^{2}(\xi(s)) \;ds \leq 2C\|\xi^{{\mathrm{in}}}\|, \quad t\geq 0,  
\end{align}  
for some positive constant $C$ that depends only on $\lambda$ and $\Upsilon_{2}$. Recalling the monotonicity property of \eqref{monotonicityofmomentsintime}, we note that the total mass $M_{1}(\xi(\cdot))$ is non-increasing. Consequently, by \eqref{TempLarge-3}, it follows that  
\begin{align*}  
	tM_{1}^{2}(\xi(t)) \leq \int_{0}^{t} M_{1}^{2}(\xi(s)) \;ds \leq 2C\|\xi^{{\mathrm{in}}}\|,\quad t\ge 0.  
\end{align*}  
This leads to the following asymptotic decay estimate for the total mass $M_{1}(\xi(t))$, i.e.,  
\begin{align*}  
	M_{1}(\xi(t)) \leq \frac{\sqrt{2C\|\xi^{{\mathrm{in}}}\|}}{\sqrt{t}}, \quad t > 0.  
\end{align*}  
As a direct consequence, we obtain the limit 
\begin{align*}
	\lim_{t\rightarrow \infty} M_{1}(\xi(t)) = 0.
\end{align*}
The remaining case to consider is $\lambda=2$. In this scenario, \eqref{TempLarge-3} follows directly from \eqref{coaglowerboundM} and \eqref{LargeM-1}. The proof of \Cref{Largetimebehavior}(b) is then completed using the same reasoning as above.
\end{proof}


	\noindent \textbf{Acknowledgments.} SS's research was funded by the Council of Scientific and Industrial Research, India, through grant No.09/143(0987)/2019-EMR-I.
	
	\bibliographystyle{abbrv}
	\bibliography{RenewalGrowth_SS_AKG}

\end{document}